\numberwithin{equation}{section}
\theoremstyle{plain}
\newtheorem{theorem}{Theorem}[section]
\newtheorem*{theorem*}{Theorem}
\newtheorem{corollary}[theorem]{Corollary}
\newtheorem{proposition}[theorem]{Proposition}
\newtheorem{lemma}[theorem]{Lemma}
\theoremstyle{definition}
\newtheorem{definition}[theorem]{Definition}
\theoremstyle{remark}
\newtheorem{remark}[theorem]{Remark}
\newcommand\eps{\ensuremath{\varepsilon}}
\newcommand\e{\ensuremath{\mathbf{e}}}
\newcommand\norm[1]{\ensuremath{\left\Vert {#1} \right\Vert}}
\newcommand\ipe[2]{\ensuremath{\langle {#1}, {#2} \rangle_{\e}}}
\newcommand\norme[1]{\ensuremath{\big| #1 \big|_{\e}}}
\newcommand\loc{\ensuremath{\mathrm{loc}}}
\newcommand\Wloc[1]{\ensuremath{W^{#1}_{\loc}(M)}}
\newcommand\Lp[1]{\ensuremath{L^{#1}_{\loc}(M)}}
\newcommand\x{\ensuremath{\mathbf{x}}}
\newcommand\adm{\ensuremath{m_{\mathrm{ADM}}}}
\newcommand\sgmin{\ensuremath{( s_{\g} )_-}}
\newcommand\sepsmin{\ensuremath{( s_{\g_{\eps}} )_-}}
\newcommand\shmin{\ensuremath{( s_{\h} )_-}}
\newcommand\g{\ensuremath{\mathbf{g}}}
\newcommand\h{\ensuremath{\mathbf{h}}}
\newcommand\geps{\ensuremath{\g_{\eps}}}
\newcommand\ghat{\ensuremath{\widehat{\mathbf{g}}}}
\newcommand\hhat{\ensuremath{\widehat{\mathbf{h}}}}
\newcommand\T{\ensuremath{\mathbf{T}}}
\renewcommand\H{H\"{o}lder}
\newcommand\R{\ensuremath{\mathbb{R}}}
\newcommand\la{\langle}
\newcommand\ra{\rangle}
\newcommand\bel[1]{\begin{equation}\label{#1}}
\newcommand\ee{\end{equation}}
\newcommand\Dprime{\ensuremath{\mathscr{D}'}}
\newcommand\D{\ensuremath{\mathscr{D}}}
\DeclareMathOperator{\supp}{supp}
\begin{document}
\title{A positive mass theorem for low-regularity Riemannian metrics}
\date{\today}

\author[J.D.E.\ Grant]{James~D.E.\ Grant}
\address{\href{http://www2.surrey.ac.uk/maths/index.htm}{Department of Mathematics} \\ Faculty of Engineering and Physical Sciences \\ \href{http://www2.surrey.ac.uk/}{University of Surrey} \\ Guildford \\ GU2 7XH \\ United Kingdom}
\email{\href{mailto:j.grant@surrey.ac.uk}{j.grant@surrey.ac.uk}}
\urladdr{\href{http://jdegrant.wordpress.com}{http://jdegrant.wordpress.com}}

\author[N.\ Tassotti]{Nathalie Tassotti}
\address{\href{http://www.mat.univie.ac.at/home.php}{Fakult\"{a}t f\"{u}r Mathematik} \\
\href{http://www.univie.ac.at}{Universit\"{a}t Wien} \\ Oskar-Morgenstern-Platz 1 \\ 1090 Vienna \\ Austria}
\email{\href{nathalie.tassotti@univie.ac.at}{nathalie.tassotti@univie.ac.at}}
\urladdr{\href{http://nathalietassotti.wordpress.com}{http://nathalietassotti.wordpress.com}}

\begin{abstract}
We show that the positive mass theorem holds for continuous Riemannian metrics that lie in the Sobolev space $W^{2, n/2}_{\loc}$ for manifolds of dimension less than or equal to $7$ or spin-manifolds of any dimension. More generally, we give a (negative) lower bound on the ADM mass of metrics for which the scalar curvature fails to be non-negative, where the negative part has compact support and sufficiently small $L^{n/2}$ norm. We show that a Riemannian metric in $W^{2, p}_{\loc}$ for some $p > \frac{n}{2}$ with non-negative scalar curvature in the distributional sense can be approximated locally uniformly by smooth metrics with non-negative scalar curvature. For continuous metrics in $W^{2, n/2}_{\loc}$, there exist smooth approximating metrics with non-negative scalar curvature that converge in $L^p_{\loc}$ for all $p < \infty$.
\end{abstract}
\keywords{positive mass theorem, scalar curvature, low-regularity geometry, smooth approximation}
\subjclass[2010]{53C20, 58J05}

\thanks{We are grateful to Prof.~M.\ Kunzinger for a discussion that clarified the proof of Lemma~\ref{lemma:smooth}, and for checking some parts of this paper.
The work of NT was funded by a Forschungsstipendium from the University of Vienna, Grant P23719-N16 of the \href{http://www.fwf.ac.at/}{FWF} and a DOC fellowship from the~\href{http://http://www.oeaw.ac.at/}{Austrian Academy of Sciences}. JG is grateful to \href{http://www.sjc.ox.ac.uk/}{St.~John's College}, \href{http://www.ox.ac.uk/}{University of Oxford} for a Visiting Scholarship during which this paper was completed. He is also grateful to the \href{https://www.maths.ox.ac.uk/}{Mathematical Institute} in Oxford for their hospitality.}

\maketitle
\thispagestyle{empty}

\section{Introduction}
\label{sec:Introduction}

An important result in the theory of Riemannian metrics with non-negative scalar curvature is the positive mass theorem. In this paper, we show that the positive mass theorem continues to hold for a class of low-regularity metrics, and attempt to distil from our proof the important geometrical structures that are required for validity of our results. Our work may, therefore, be considered as a contribution to (or, at least, was motivated by) the development of a ``synthetic'' approach to non-negative scalar curvature metrics, in the sense that Alexandrov spaces with curvature bounded above/below are a natural generalisation of Riemannian manifolds with upper/lower bounds on sectional curvature, and the metric measure spaces of Lott--Villani and Sturm generalise the notions of Riemannian metrics with Ricci curvature bounded below.

We first recall the standard formulation of the positive mass theorem for Riemannian metrics. A Riemannian manifold $(M, \g)$ of dimension $n \ge 3$ is \emph{asymptotically flat\/} if there exists a compact subset $K$ of $M$ such that the end $M \setminus K$ is diffeomorphic to $\R^n \setminus \overline{B(0, r_0)}$ for some $r_0 > 0$, i.e. there exists a diffeomorphism $\Phi \colon \R^n \setminus \overline{B(0, r_0)} \to M \setminus K$. (For simplicity, we will assume throughout that our manifold has only one end. The generalisation to multiple ends is straightforward.) In terms of this coordinate chart, it is required that the pull-back of the metric $\g$ have the fall-off properties:
\begin{subequations}
\begin{align}
\left( \Phi^* \g \right)_{ij} - \delta_{ij} &= O \left( \frac{1}{|\x|^q} \right), 
\\
\partial_k \left( \Phi^* \g \right)_{ij} &= O \left( \frac{1}{|\x|^{q+1}} \right), 
\\
\partial_k \partial_l \left( \Phi^* \g \right)_{ij} &= O \left( \frac{1}{|\x|^{q+2}} \right), 
\end{align}\label{asymptotics}\end{subequations}
as $|\x| \to \infty$, for some $q > \frac{n-2}{2}$. We also demand that the scalar curvature of $\g$, $s_{\g}$, satisfies the integrability condition
\begin{equation}
s_{\g} \in L^1(M).
\tag{1.1d}
\end{equation}
We define the \emph{ADM mass\/} of $(M, \g)$ to be
\[
\adm(\g) := \frac{1}{2 (n-1) \omega_{n-1}} \lim_{r \to \infty} \int_{S_r} \left( g_{ij, i} - g_{ii, j} \right) \nu_i \, d\mu_0,
\]
where $S_r := \Phi \left( \{ \x \in \R^n \mid |\x| = r \} \right)$ denotes the coordinate sphere in the end $M \setminus K$, $\nu$ and $d\mu_0$ denote the outward-pointing unit normal to $S_r$ and induced Euclidean volume element on $S_r$, respectively, and $\omega_{n-1}$ denotes the area of the unit $(n-1)$ sphere in $\R^n$. Under the conditions imposed in~\eqref{asymptotics}, the mass is a well-defined quantity and independent of the asymptotically flat coordinate chart $\Phi$~\cite{Bartnik}.

If the manifold $M$ and the Riemannian metric $\g$ are smooth, and additionally $M$ is either a manifold of dimension $n \le 7$~\cite{SY} or a spin manifold of any dimension $n \ge 3$~\cite{Witten}, one has the following result.

\begin{theorem*}[Positive mass theorem]
Let $(M, \g)$ be a complete, asymptotically flat Riemannian manifold, where the metric $\g$ has non-negative scalar curvature. Then $\adm(\g) \ge 0$.
\end{theorem*}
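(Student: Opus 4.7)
The plan is to present two complementary strategies, corresponding to the two references cited alongside the theorem: the minimal hypersurface method of Schoen--Yau for $3 \le n \le 7$, and Witten's spinor method for spin manifolds of arbitrary dimension $n \ge 3$. Both begin from the same input, namely an asymptotically flat metric as in~\eqref{asymptotics} with $s_{\g} \ge 0$, but exploit very different geometric mechanisms to transfer the pointwise sign condition on $s_{\g}$ into a sign for the boundary integral defining $\adm(\g)$.

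For Witten's approach I would first fix a constant spinor $\psi_0$ at infinity (with respect to the flat background induced by the chart $\Phi$) and then seek a harmonic spinor $\psi$ with $\psi - \psi_0 \to 0$ as $|\x| \to \infty$. Existence follows from a weighted Fredholm theory for the Dirac operator $D$ on asymptotically flat manifolds; injectivity on the relevant decaying space is a direct consequence of the Lichnerowicz--Weitzenb\"ock identity
\[
D^2 = \nabla^* \nabla + \tfrac14 s_{\g}
\]
combined with $s_{\g} \ge 0$. The key step is then to apply this identity to $\psi$, integrate over the region bounded by a large coordinate sphere $S_r$, and rewrite the left side via integration by parts as a flux across $S_r$. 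One checks by direct computation in the asymptotically flat chart that this boundary flux converges, as $r \to \infty$, to a positive universal multiple of $|\psi_0|^2 \cdot \adm(\g)$. Since the bulk integrand $|\nabla \psi|^2 + \tfrac14 s_{\g} |\psi|^2$ is pointwise non-negative and $\psi_0$ is arbitrary, this forces $\adm(\g) \ge 0$.

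For the Schoen--Yau argument I would argue by contradiction: assuming $\adm(\g) < 0$, solve a Plateau problem with prescribed asymptotic planar behaviour in the end $M \setminus K$ to produce a complete, stable, area-minimising hypersurface $\Sigma \subset M$ asymptotic to a coordinate hyperplane. Stability, together with the Gauss equation and $s_{\g} \ge 0$, then forces the induced metric on $\Sigma$ to be conformally equivalent to one of non-negative scalar curvature inheriting a negative mass. Iterating this descent reduces to the two-dimensional base case, where the Gauss--Bonnet theorem combined with the asymptotic structure yields the contradiction. The restriction $n \le 7$ is dictated purely by the regularity theory for area minimisers; in higher dimensions $\Sigma$ may carry singularities that obstruct the induction.

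The main obstacle in the Witten proof is analytical: establishing the isomorphism properties of $D$ on weighted Sobolev spaces tuned to the fall-off rate $q$ in~\eqref{asymptotics}, and executing the asymptotic expansion of the boundary flux carefully enough to identify it with $\adm(\g)$. In the Schoen--Yau proof, the hardest step is the existence, regularity, and asymptotic control of the minimiser $\Sigma$, and making the dimensional descent preserve asymptotic flatness; it is precisely the regularity failure beyond dimension seven that makes the spinorial method indispensable in the remaining cases.
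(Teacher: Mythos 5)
This statement is the classical positive mass theorem, which the paper does not prove but simply invokes as a black box, citing Schoen--Yau~\cite{SY} for $3 \le n \le 7$ and Witten~\cite{Witten} for spin manifolds; your outline is a faithful high-level summary of exactly those two cited arguments, with the genuinely hard steps (weighted Fredholm theory for the Dirac operator, and existence/regularity/asymptotics of the area minimiser) correctly identified rather than carried out. Since the paper offers no proof of its own to compare against, there is nothing further to check beyond noting that your sketch matches the standard references the paper relies on.
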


In this paper, we show that the positive mass theorem remains valid for continuous metrics $\g$ that lie in the local Sobolev space $W^{2, n/2}_{\loc}(M)$. We assume that $M$ is either a spin manifold, or that $n \le 7$ in order that the classical positive mass theorem holds. For simplicity, we assume that the metric $\g$ is smooth outside of the compact set $K$ and satisfies the asymptotic conditions~\eqref{asymptotics}, although it is also straightforward to generalise our results to the case where the metric lies in an appropriate weighted space outside of $K$ as in~\cite{Bartnik}. Since $\g \in C^0(M) \cap W^{2, n/2}_{\loc}(M)$, the scalar curvature of $\g$ lies in $L^{n/2}_{\loc}(M)$ and, therefore, is well-defined as a distribution on $M$. One of our main results is then the following:%
\footnote{This result was announced in~\cite{pmt1}, with a sketch of the main points of the proof.}

\begin{theorem*}
Let $(M, \g)$ be a complete, asymptotically flat Riemannian manifold, with $\g \in C^0(M) \cap W^{2, n/2}_{\loc}(M)$ and $s_{\g}$ non-negative in the distributional sense (i.e. $\la s_{\g}, \varphi \ra \ge 0, \forall \varphi \in \D(M)$). Then the ADM mass $\adm(\g)$ is non-negative.
\end{theorem*}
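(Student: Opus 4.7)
The plan is to reduce to the classical smooth positive mass theorem via smooth approximation, exploiting the approximation result announced earlier in the paper. That result provides, for any continuous $\g \in W^{2, n/2}_{\loc}(M)$ with $s_{\g} \ge 0$ distributionally, a sequence of smooth metrics $\geps$ with $s_{\geps} \ge 0$ (pointwise) converging to $\g$ in $\Lp{p}$ for all $p < \infty$. Once this is in hand, the remaining work is really about the asymptotic end.

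First, I would arrange that the smoothing leaves the end untouched. Because $\g$ is already smooth outside the compact set $K$ and satisfies~\eqref{asymptotics} there, I can localise the approximation to a slightly larger compact set $K'$ containing $K$ in its interior, for instance by performing the smooth approximation on a precompact neighbourhood of $K$ and gluing with $\g$ itself in the region where $\g$ is already smooth (using a cutoff supported in that smooth region, no distributional issue arises). After this localisation, $\geps \equiv \g$ on $M \setminus K'$. Consequently, $\geps$ is automatically asymptotically flat with the same fall-off exponent $q$ as $\g$, and its scalar curvature is smooth on $K'$ and equals $s_{\g}$ outside $K'$, so $s_{\geps} \in L^1(M)$ holds by~(1.1d). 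The ADM integral depends only on the behaviour of the metric in the end, so
\[
\adm(\geps) \;=\; \adm(\g) \qquad \text{for every } \eps > 0.
\]

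Next, I apply the classical (smooth) positive mass theorem to each $(M, \geps)$. Since we are assuming that $M$ is either spin or has dimension $n \le 7$, and since $\geps$ is smooth, asymptotically flat, and has $s_{\geps} \ge 0$, that theorem yields $\adm(\geps) \ge 0$ for every $\eps$. Combining with the equality of the previous display immediately gives $\adm(\g) \ge 0$, which is the desired conclusion. Note that no limiting argument for the ADM mass is required, precisely because the approximation was arranged to be trivial outside $K'$.

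The main obstacle is therefore not this deduction but the construction of the scalar-curvature-preserving smooth approximation $\geps$ used as input; that construction is the substantive analytic content developed earlier in the paper, and in particular it is what requires $\g$ to be continuous and to lie in $W^{2, n/2}_{\loc}$, since $s_{\g}$ is then at least a well-defined $\Ln$ distribution which can be controlled under mollification. A secondary care-point is verifying that gluing the smooth approximation to $\g$ on the transition annulus (inside the region where $\g$ is already smooth) does not destroy the non-negativity of the scalar curvature there; this is immediate because on that annulus $\geps$ simply equals $\g$, whose scalar curvature is classically non-negative in the smooth sense by hypothesis together with the distributional hypothesis applied to test functions supported in the smooth region.
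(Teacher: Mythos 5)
The final deduction (classical PMT applied to smooth approximants with the same mass) is fine, but the input you rely on does not exist in the form you need, and the step where you ``arrange that the smoothing leaves the end untouched'' hides the entire difficulty of the theorem. The mollified metrics $\geps$ of Lemma~\ref{lemma:smooth} can indeed be taken to coincide with $\g$ outside a compact set, so $\adm(\geps)=\adm(\g)$ --- but these metrics do \emph{not} have non-negative scalar curvature; mollification only gives control of $\norm{\sepsmin}_{L^{n/2}}$ (Proposition~\ref{thm:W2p}). The metrics that \emph{do} have non-negative scalar curvature are the conformal rescalings $\ghat_{\eps}=u_{\eps}^{4/(n-2)}\geps$, and the conformal factor $u_{\eps}=1+v_{\eps}$ is a global solution of an elliptic equation with $v_{\eps}(\x)=A_{\eps}|\x|^{2-n}+O(|\x|^{1-n})$: it is not identically $1$ near infinity, so $\ghat_{\eps}\neq\g$ in the end and $\adm(\ghat_{\eps})=\adm(\g)+2A_{\eps}$. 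Your assertion that ``no limiting argument for the ADM mass is required'' is therefore precisely where the proof breaks: the substantive content of Sections~\ref{sec:elliptic}--\ref{sec:mass} is the proof that $A_{\eps}\to 0$, via the $L^{n^*}$ and $H^1$ estimates on $v_{\eps}$.

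The proposed repair --- gluing the non-negative-scalar-curvature approximation to $\g$ across an annulus where $\g$ is smooth --- does not close this gap. On the transition annulus the interpolated metric is neither $\ghat_{\eps}$ nor $\g$; its scalar curvature differs from $s_{\g}$ by terms involving two derivatives of the cutoff times $(\ghat_{\eps}-\g)$, and since $s_{\g}$ is only known to be $\ge 0$ (not strictly positive) there, even $C^2$-small gluing errors can push the scalar curvature negative on the annulus. Restoring non-negativity after such a gluing would require a further global conformal correction, which reintroduces exactly the mass shift you were trying to avoid. Finally, the $L^p_{\loc}$-approximation theorem you invoke as a black box is itself proved in Section~\ref{sec:approximation} by the same conformal construction (on bounded domains, with Dirichlet data $u_{\eps}=1$ on a finite boundary rather than at infinity), so using it here without confronting the change of mass is circular with respect to the actual difficulty.
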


In fact, our methods allow us to prove a stronger result, for metrics $\g$ for which the scalar curvature is not constrained to be non-negative. Denoting the Sobolev constant of the metric $\g$ by $c_1[\g]$, and the Riemannian measure of a measurable set $E \subseteq M$ by $\mu_{\g}(E)$, we show the following.

\begin{theorem*}
Let $(M, \g)$ be a complete, asymptotically flat Riemannian manifold, with $\g \in C^0(M) \cap W^{2, n/2}_{\loc}(M)$ with the properties that $\sgmin$ is of compact support and that
\[
c_1[\g] \, \norm{\sgmin}_{L^{n/2}(M, \g)} < 4 \, \frac{n-1}{n-2}.
\]
Then the mass of the metric $\g$ satisfies
\[
\adm(\g)
\ge - \frac{1}{2 (n-1) \omega_{n-1}} \frac{\norm{\sgmin}_{L^{n/2}(M, \g)}}{\left( 1 - \frac{n-2}{4 (n-1)} c_1[\g] \, \norm{\sgmin}_{L^{n/2}(M, \g)} \right)^2} \, \mu_{\g}(\supp \sgmin)^{2/n^*}.
\]
\end{theorem*}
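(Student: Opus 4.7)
The plan is the classical conformal-rescaling trick, carried out in the low-regularity class of metrics treated by the preceding positive mass theorem. Set $a := \tfrac{4(n-1)}{n-2}$. I would produce a positive, continuous function $u$ on $M$ with $u \to 1$ at infinity and satisfying the weak equation
\[
-a\,\Delta_{\g} u - \sgmin\, u = 0,
\]
so that the rescaled metric $\tilde\g := u^{4/(n-2)}\,\g$ has scalar curvature $s_{\tilde\g} = u^{-4/(n-2)}(s_{\g})_{+} \ge 0$ distributionally. The previously-established positive mass theorem then gives $\adm(\tilde\g) \ge 0$, and the claimed inequality will follow by quantitatively tracking how the mass changes under this conformal rescaling.

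To produce $u$, I would set $u = 1 + v$ with $v \to 0$, converting the equation to
\[
-a\,\Delta_{\g} v - \sgmin\, v = \sgmin,
\]
whose right-hand side is compactly supported and lies in $L^{n/2}$. On the completion $\mathscr{H}$ of $\D(M)$ under $\norm{\nabla\cdot}_{L^{2}(M,\g)}$, the associated bilinear form $B(v,w) := \int_{M}(a\,\g(\nabla v,\nabla w) - \sgmin\,vw)\,d\mu_{\g}$ is continuous, and the Sobolev embedding yields
\[
\int_{M}\sgmin\,v^{2}\,d\mu_{\g} \le c_{1}[\g]\,\norm{\sgmin}_{L^{n/2}(M,\g)}\,\norm{\nabla v}_{L^{2}(M,\g)}^{2},
\]
so coercivity of $B$ is \emph{exactly} the hypothesis $c_{1}[\g]\norm{\sgmin}_{L^{n/2}(M,\g)} < a$. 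Lax--Milgram then produces a weak solution $v \in \mathscr{H}$. Elliptic regularity and a weak maximum-principle argument---using that the zeroth-order coefficient $\sgmin$ is non-negative and that $u \to 1$ at infinity---show $u$ continuous, in $\Wloc{2,n/2}$, and in fact $u \ge 1$ (since $u$ is then superharmonic with boundary value $1$ at infinity); hence $\tilde\g \in C^{0}(M) \cap \Wloc{2,n/2}$, and since $v = O(|\x|^{-(n-2)})$ at infinity by the usual fundamental-solution representation on the end, $\tilde\g$ inherits asymptotic flatness from $\g$. The preceding theorem therefore yields $\adm(\tilde\g) \ge 0$.

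Next I would compute the mass change. Expanding $(\Phi^{*}\tilde\g)_{ij} - \delta_{ij}$ to leading order gives
\[
\adm(\tilde\g) = \adm(\g) - \frac{2}{(n-2)\omega_{n-1}}\lim_{r\to\infty}\int_{S_{r}}\partial_{\nu} v\, d\mu_{0},
\]
and applying the divergence theorem to $\Delta_{\g} u = -\sgmin\, u/a$ converts the flux into
\[
\lim_{r\to\infty}\int_{S_{r}}\partial_{\nu} v\, d\mu_{0} = -\frac{1}{a}\int_{M}\sgmin\, u\, d\mu_{\g},
\]
so combining with $\adm(\tilde\g) \ge 0$ gives $\adm(\g) \ge -\tfrac{1}{2(n-1)\omega_{n-1}}\int_{M}\sgmin\, u\, d\mu_{\g}$. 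Finally, testing the equation for $v$ against $v$ itself and combining Hölder and Sobolev inequalities yields an a priori bound of the form
\[
\norm{v}_{L^{2n/(n-2)}(M,\g)} \le \frac{c_{1}[\g]\,\norm{\sgmin}_{L^{n/2}(M,\g)}\,\mu_{\g}(\supp\sgmin)^{1/n^{*}}}{a - c_{1}[\g]\,\norm{\sgmin}_{L^{n/2}(M,\g)}};
\]
then estimating $\int_{M}\sgmin\,u\,d\mu_{\g}$ by Hölder on $\supp\sgmin$ with exponents $\tfrac{n}{2}$ and $\tfrac{n}{n-2}$, splitting $u = 1 + v$, and inserting the above bound produces the stated inequality after some algebraic simplification of the constants.

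The principal obstacle lies in the low-regularity analysis underlying the construction of $u$: one must show that the weak solution is pointwise positive and has enough regularity (continuous, in $\Wloc{2,n/2}$, with the correct $|\x|^{2-n}$ decay at infinity) for $\tilde\g$ to fall within the hypothesis class of the preceding theorem, and one must carefully justify the conformal change-of-mass formula and the boundary-flux computation when the metric is only $W^{2,n/2}_{\loc}$. Once those points are secured, the final quantitative estimate is a routine application of the Hölder and Sobolev inequalities.
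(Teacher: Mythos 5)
Your route is genuinely different from the paper's. You perform the conformal rescaling directly on the rough metric $\g$ and then invoke the low-regularity positive mass theorem for $\tilde\g = u^{4/(n-2)}\g$. The paper instead never conformally rescales the rough metric: it first builds smooth approximations $\geps$ agreeing with $\g$ outside a compact set (so $\adm(\geps)=\adm(\g)$), solves $\Delta_{\geps}u_\eps + \tfrac{1}{a_n}\sepsmin u_\eps = 0$ for the \emph{smooth} metrics, applies the \emph{classical} positive mass theorem to $\ghat_\eps$, and passes to the limit using $\norm{\sepsmin}_{L^{n/2}}\to\norm{\sgmin}_{L^{n/2}}$ and $c_1[\geps]\to c_1[\g]$. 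Your mass-change computation (representing the flux as $-\tfrac1a\int\sgmin u$ and then using H\"older plus the $L^{n^*}$ bound on $v$) is essentially the paper's computation of the constant $A_\eps$, and if your construction went through it would even yield a slightly sharper bound, with a first rather than second power of $1-\tfrac{1}{a_n}c_1[\g]\norm{\sgmin}_{L^{n/2}}$ in the denominator.

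The genuine gap is the step you describe as ``elliptic regularity and a weak maximum-principle argument'': with $\g$ only in $C^0(M)\cap\Wloc{2,n/2}$ and the zeroth-order coefficient $\sgmin$ only in $L^{n/2}$, you cannot conclude that the weak solution $u$ is continuous, bounded, bounded away from zero, or in $\Wloc{2,n/2}$. The exponent $n/2$ is exactly critical for De Giorgi--Nash--Moser: the paper's Section~7 shows that Moser iteration for $\Delta u + fu=0$ with $f\in L^{n/2}$ terminates after finitely many steps and yields only $L^p$ bounds for finite $p$, and its closing remark exhibits $f_\eps\ge 0$ with $\norm{f_\eps}_{L^1(B(0,1))}\to 0$ in dimension $2$ for which the solutions satisfy $u_\eps(0)\to\infty$. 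So there is no a priori $L^\infty$ control on $u$, and without it you get neither $\tilde\g\in C^0\cap\Wloc{2,n/2}$ (needed to place $\tilde\g$ in the hypothesis class of the low-regularity theorem) nor $\sgmin u\in L^{n/2}$ (needed even to start a $W^{2,n/2}$ estimate for $u$ --- note the circularity). Your positivity argument has the same circularity: $u$ is superharmonic only where $u\ge 0$. The paper's detour through the smooth metrics $\geps$ exists precisely to avoid solving any critical-exponent elliptic problem for the rough metric: all the PDE analysis is done for smooth metrics, where Schoen--Yau's Lemma~3.2 and the classical maximum principle apply, and only integral quantities (the $L^{n^*}$ norm of $v_\eps$, the Dirichlet energy, and $A_\eps$) are passed to the limit. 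To salvage your approach you would have to either prove continuity and positivity of $u$ at the critical exponent (which the paper's own discussion suggests is false in general) or restructure the argument so that only $L^{n^*}$ information about $u$ is ever used --- which is, in effect, what the paper does.
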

Therefore, as long as the negative part of the scalar curvature of $\g$ is of compact support and has sufficiently small $L^{n/2}$ norm, we have a (negative) lower bound on mass. To our knowledge, this is the first result of this type.%

\smallskip
Our approach to proving the positive mass theorem is to construct appropriate smooth approximations to the metric $\g$, and is based on a modification of the approach of Miao~\cite{Miao} (see, also,~\cite{McS, Lee}). In~\cite{Miao}, metrics were considered where the singular behaviour was localised on a hypersurface. By an ingenious mollification technique, he smoothed out the metric on a neighbourhood of this hypersurface in a controlled way, while leaving the metric unchanged on the rest of the manifold. The metrics that we consider can be non-smooth on an arbitrary compact subset $K \subset M$, so we cannot directly apply this technique. Nevertheless, we proceed by smoothing the metric on the compact set $K$ but leaving it unchanged on $M \setminus K$. This implies that the smooth approximating metrics $\g_{\eps}$ have the same asymptotic behaviour, and hence the same mass, as the metric $\g$. The metrics $\g_{\eps}$ generally no longer have non-negative scalar curvature, but we may perform a conformal transformation to give a new family of smooth metrics $\ghat_{\eps}$ that are both asymptotically flat and have non-negative scalar curvature. The classical positive mass theorem above then implies that $\adm(\ghat_{\eps}) \ge 0$. Using elliptic estimates, we show that $\adm(\ghat_{\eps}) \to \adm(\g)$ as $\eps \to 0$, thereby implying that $\adm(\g) \ge 0$.

As a significant by-product of our approach, we establish results concerning the approximation of rough metrics with non-negative scalar curvature by smooth metrics with non-negative scalar curvature. In the case where the metric $\g$ is assumed to lie in the Sobolev space $W^{2, p}_{\loc}$ for some $p > \frac{n}{2}$ (and therefore, by the Sobolev embedding theorem, is continuous) we have the following approximation theorem:
\begin{theorem*}
Let $\g$ be a Riemannian metric on an open set $\Omega$ of regularity $W^{2, p}_{\loc}(\Omega)$, $p > \frac{n}{2}$ with non-negative scalar curvature in the distributional sense. Then there exists a family of smooth, Riemannian metrics $\{ \ghat_{\eps} \mid \eps > 0 \}$ with non-negative scalar curvature such that $\ghat_{\eps}$ converge locally uniformly to $\g$ as $\eps \to 0$.
\end{theorem*}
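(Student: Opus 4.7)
The plan is to adapt the mollify-and-conformally-deform strategy sketched in the Introduction, now performed locally on $\Omega$. First I would mollify $\g$ in coordinate patches to obtain smooth metrics $\g_\eps$, defined on the exhausting subdomains $\Omega_\eps := \{x \in \Omega : \operatorname{dist}(x, \partial \Omega) > \eps\}$. Since $\g \in W^{2,p}_\loc(\Omega)$ with $p > n/2$, Sobolev embedding gives $\g \in C^0(\Omega)$, and standard mollification yields $\g_\eps \to \g$ locally uniformly as well as in $W^{2,p}_\loc(\Omega)$. Because the scalar curvature is a continuous bilinear expression in the metric, its inverse, and its first two derivatives, one obtains $s_{\g_\eps} \to s_\g$ in $L^p_\loc(\Omega)$; as $s_\g \ge 0$ distributionally and $s_\g \in L^p_\loc$, the inequality holds almost everywhere, so $\|(s_{\g_\eps})_-\|_{L^p(K)} \to 0$ on every compact $K \subset \Omega$.

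Next I would look for $\ghat_\eps = u_\eps^{4/(n-2)} \g_\eps$ with $s_{\ghat_\eps} \equiv 0$. By the conformal scaling formula
\[
s_{\ghat_\eps} = u_\eps^{-\frac{n+2}{n-2}}\!\left( -\frac{4(n-1)}{n-2}\, \Delta_{\g_\eps} u_\eps + s_{\g_\eps}\, u_\eps \right),
\]
it suffices to find a positive smooth solution of $L_\eps u_\eps := -\frac{4(n-1)}{n-2}\Delta_{\g_\eps} u_\eps + s_{\g_\eps}\, u_\eps = 0$. Fix an exhaustion $K_1 \Subset K_2 \Subset \cdots \Subset \Omega$ by smooth precompact domains. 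For each $j$ and each $\eps$ small enough that $K_j \subset \Omega_\eps$, I would solve the Dirichlet problem $L_\eps u_\eps = 0$ on $K_j$ with boundary value $u_\eps|_{\partial K_j} = 1$. Writing $u_\eps = 1 + v_\eps$ turns this into $L_\eps v_\eps = -s_{\g_\eps}$ with zero boundary data, and since the coefficients are smooth, classical elliptic regularity then yields $u_\eps \in C^\infty(\overline{K_j})$.

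To secure positivity of $u_\eps$ and the desired convergence, I would combine classical $L^p$-elliptic theory with the Sobolev embedding $W^{2,p}(K_j) \hookrightarrow C^0(K_j)$, valid since $p > n/2$. The zeroth-order coefficient of $L_\eps$ has negative part vanishing in $L^p(K_j)$, so for $\eps$ small the operator $L_\eps$ remains coercive on $K_j$ with constant independent of $\eps$, and the quantitative estimate
\[
\|v_\eps\|_{C^0(K_j)} \le C_j \, \|s_{\g_\eps}\|_{L^p(K_j)} \longrightarrow 0 \quad \text{as } \eps \to 0
\]
yields $u_\eps \to 1$ uniformly on $K_j$. In particular $u_\eps > 0$ on $K_j$ for all sufficiently small $\eps$, so $\ghat_\eps := u_\eps^{4/(n-2)} \g_\eps$ is a smooth Riemannian metric on $K_j$ with identically vanishing (hence non-negative) scalar curvature, and $\ghat_\eps \to \g$ uniformly on $K_j$. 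A diagonal choice $j = j(\eps) \to \infty$ as $\eps \to 0$ then assembles these pieces into a single family exhibiting the claimed local uniform convergence on $\Omega$.

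The main obstacle is the uniform solvability of the Dirichlet problem for $L_\eps$ together with the quantitative $C^0$-smallness of $v_\eps$ when the zeroth-order coefficient $s_{\g_\eps}$ is not a priori signed. The critical role of the hypothesis $p > n/2$ is twofold: it places $s_{\g_\eps}$ strictly above the scale-critical Lebesgue space for a second-order elliptic operator, enabling the embedding $W^{2,p} \hookrightarrow C^0$, and it simultaneously allows the small negative part of $s_{\g_\eps}$ to be absorbed without destroying coercivity of $L_\eps$. The borderline case $p = n/2$, which is the one required for the positive mass theorem itself, is genuinely more delicate and is precisely what necessitates the more elaborate analysis developed elsewhere in the paper.
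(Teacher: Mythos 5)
There is a genuine gap, and it lies in your choice of conformal equation. You solve $-\tfrac{4(n-1)}{n-2}\Delta_{\g_\eps}u_\eps + s_{\g_\eps}u_\eps = 0$ with the \emph{full} scalar curvature as zeroth-order coefficient, so that $v_\eps = u_\eps - 1$ satisfies an equation with source term $-s_{\g_\eps}$. But $\norm{s_{\g_\eps}}_{L^p(K_j)}$ does \emph{not} tend to zero: it tends to $\norm{s_{\g}}_{L^p(K_j)}$, which is nonzero whenever $\g$ is not already scalar-flat. Your claimed estimate $\norm{v_\eps}_{C^0(K_j)} \le C_j\,\norm{s_{\g_\eps}}_{L^p(K_j)} \to 0$ therefore fails, and in fact the conclusion it was meant to deliver is false for your construction: if $\g$ is, say, a smooth metric of strictly positive scalar curvature, your $\ghat_\eps$ converge to the scalar-flat conformal rescaling $u^{4/(n-2)}\g$ with $u \not\equiv 1$, not to $\g$. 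The paper avoids this by imposing only $\Delta_{\g_\eps}u_\eps + \tfrac{1}{a_n}\sepsmin u_\eps = 0$, i.e.\ using only the \emph{negative part} of the scalar curvature. The resulting metric then has scalar curvature proportional to $(s_{\g_\eps})_+\,u_\eps \ge 0$ (non-negative, not identically zero, which is all the statement requires), and the source term in the equation for $v_\eps$ becomes $\tfrac{1}{a_n}\sepsmin$, whose $L^p$ norm genuinely tends to zero by the analogue of Proposition~\ref{thm:W2p}. With that substitution your smallness argument goes through.

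A secondary point: even with the correct equation, the paper does not simply cite classical $L^p$ elliptic theory for the $C^0$ bound on $v_\eps$. Since the only quantities controlled uniformly in $\eps$ are the Sobolev constant $c_1[\g_\eps]$ and $\norm{\sepsmin}_{L^p}$, the paper runs an explicit Moser iteration (Theorems~\ref{thm:lowervest} and~\ref{thm:uppervest}) to produce an $L^\infty$ bound on $v_\eps$ depending only on these, together with $\mu(\Omega)$; this is also exactly where the hypothesis $p > \tfrac{n}{2}$ enters, via $\chi = n^*/(2p)^* > 1$, and where the argument breaks down at $p = \tfrac{n}{2}$. If you prefer to invoke standard a priori estimates instead, you must justify that the constant $C_j$ is uniform in $\eps$ as the coefficients of $\Delta_{\g_\eps}$ vary; this is plausible given the locally uniform and $W^{2,p}_{\loc}$ convergence of $\g_\eps$, but it needs to be said.
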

The elliptic estimates that we require to prove this result break down for metrics $\g \in C^0(M) \cap W^{2, n/2}_{\loc}(M)$. However, we can show the following: \begin{theorem*}
Let $\g$ be a Riemannian metric on an open set $\Omega$ of regularity $C^0(\Omega) \cap W^{2, n/2}_{\loc}(\Omega)$ with non-negative scalar curvature in the distributional sense. Then there exists a family of smooth, Riemannian metrics $\{ \ghat_{\eps} \mid \eps > 0 \}$ with non-negative scalar curvature such that $\ghat_{\eps}$ converge $\g$ in $L^p_{\loc}(\Omega)$ as $\eps \to 0$, for all $p < \infty$.
\end{theorem*}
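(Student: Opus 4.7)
My plan is to adapt the argument used in the previous theorem (for $p > \frac{n}{2}$), weakening the convergence at each step to accommodate the loss of Morrey's embedding at the borderline exponent $p = \frac{n}{2}$. First, I would mollify $\g$ by convolution in local charts (patched with a partition of unity) to obtain smooth Riemannian metrics $\geps$ on $\Omega$ with $\geps \to \g$ in $C^0_{\loc}(\Omega)$ (since $\g$ is continuous) and in $W^{2, n/2}_{\loc}(\Omega)$. A Miao--Lee-type commutator estimate then yields $s_{\geps} \to s_{\g}$ in $L^{n/2}_{\loc}(\Omega)$; and since $s_{\g}$ lies in $L^{n/2}_{\loc}$ and is distributionally non-negative, it is non-negative almost everywhere, so the pointwise bound $\sepsmin \le |s_{\geps} - s_{\g}|$ gives $\norm{\sepsmin}_{L^{n/2}(U, \geps)} \to 0$ on every precompact $U \Subset \Omega$.

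Next, on such a $U$ with smooth boundary, I would solve for each sufficiently small $\eps$ the linear Dirichlet problem
\[
-\frac{4(n-1)}{n-2} \Delta_{\geps} v_{\eps} + \sepsmin \, v_{\eps} = -\sepsmin \quad \text{in } U, \qquad v_{\eps} = 0 \text{ on } \partial U,
\]
and set $u_{\eps} := 1 + v_{\eps}$, $\ghat_{\eps} := u_{\eps}^{4/(n-2)} \geps$. The conformal transformation law gives
\[
u_{\eps}^{(n+2)/(n-2)} s_{\ghat_{\eps}} = -\frac{4(n-1)}{n-2} \Delta_{\geps} u_{\eps} + s_{\geps} u_{\eps} = (s_{\geps})_+ u_{\eps},
\]
which is non-negative provided $u_{\eps} > 0$. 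Solvability is immediate from Lax--Milgram (the bilinear form is coercive since $\sepsmin \ge 0$, and bounded on $H^1_0(U)$ by Sobolev--H\"older because $\sepsmin \in L^{n/2}$). Positivity of $u_{\eps}$ reduces to $\norm{v_{\eps}}_{L^\infty(U)} < 1$; this follows from a Stampacchia-type $L^\infty$ estimate for linear elliptic equations with an $L^{n/2}$ potential, which is valid precisely when $\norm{\sepsmin}_{L^{n/2}(U, \geps)}$ is small, and hence holds once $\eps$ is small enough.

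For the convergence $\ghat_{\eps} \to \g$ in $L^p_{\loc}(\Omega)$ for every $p < \infty$, I would test the equation against $v_{\eps}$ and combine with Sobolev and H\"older inequalities to obtain $v_{\eps} \to 0$ in $H^1_0(U)$, and then apply Calder\'on--Zygmund elliptic regularity iteratively (bootstrapping through the integrability of the source term $\sepsmin \, v_{\eps}$, starting from $v_{\eps} \in L^{n^*}$) to conclude $v_{\eps} \to 0$ in $W^{2, n/2}_{\loc}(U)$. The Sobolev embedding $W^{2, n/2} \hookrightarrow L^p$, valid for every finite $p$, then gives $v_{\eps} \to 0$ and $u_{\eps}^{4/(n-2)} \to 1$ in $L^p_{\loc}(U)$, whence $\ghat_{\eps} \to \g$ in $L^p_{\loc}(U)$. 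A standard exhaustion-and-cutoff argument finally globalises the family to all of $\Omega$.

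The main obstacle, as the statement itself anticipates, is precisely the failure of the embedding $W^{2, n/2} \hookrightarrow L^\infty$: the conformal factor $u_{\eps}$ cannot be controlled uniformly in $C^0$, which is why the conclusion must be weakened from the locally uniform convergence of the $p > \frac{n}{2}$ case to $L^p_{\loc}$ convergence for every finite $p$. The technical crux is verifying that the $L^{n/2}$-smallness of $\sepsmin$ does propagate through the elliptic bootstrap into $W^{2, n/2}$ control of $v_{\eps}$, and, separately, into the $L^\infty$ bound needed to guarantee the pointwise positivity of $u_{\eps}$ for each $\eps$ --- neither of which is automatic at the endpoint exponent.
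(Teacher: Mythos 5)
Your overall architecture --- mollify $\g$ in charts, conformally rescale by $u_{\eps}=1+v_{\eps}$ with $v_{\eps}$ solving a linear Dirichlet problem with potential $\tfrac{1}{a_n}\sepsmin$, and track the conformal factor as $\norm{\sepsmin}_{L^{n/2}}\to 0$ --- is exactly the paper's. But the two analytic steps you flag as the "technical crux" both fail at the endpoint exponent, and the first of them is the essential missing idea. The Calder\'on--Zygmund bootstrap you propose for the convergence is \emph{stationary}: starting from $v_{\eps}\in L^{n^*}$ and $\sepsmin\in L^{n/2}$, H\"older places the source $\sepsmin(1+v_{\eps})$ only in $L^{2n/(n+2)}$, and elliptic regularity plus Sobolev returns $v_{\eps}\in W^{2,\,2n/(n+2)}_{\loc}\hookrightarrow L^{n^*}_{\loc}$ --- precisely the integrability you started with, so no iteration of this step improves anything. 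Reaching $W^{2,n/2}_{\loc}$ would require $\sepsmin v_{\eps}\in L^{n/2}$, i.e.\ essentially an $L^\infty$ bound on $v_{\eps}$, which is exactly what is unavailable. The paper's mechanism is different and genuinely quantitative: running Moser iteration with an $L^{n/2}$ potential, each step $\beta\mapsto\beta+\tfrac12$ costs a factor $c_1\beta^2\norm{\sepsmin}_{L^{n/2}}$, so the iteration survives only for $\beta\lesssim \left(c_1\norm{\sepsmin}_{L^{n/2}}\right)^{-1/2}$; this yields an $L^{p(\eps)}$ bound with $p(\eps)\to\infty$ as $\eps\to 0$, and since the resulting bound is a power of $c_1\norm{\sepsmin}_{L^{n/2}}\to 0$, one obtains $v_{\eps}\to 0$ in $L^p$ for each fixed finite $p$. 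Some such finitely-terminating iteration with explicit constants is needed; a qualitative bootstrap cannot substitute for it.

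Second, your positivity argument for $u_{\eps}$ invokes a ``Stampacchia-type $L^\infty$ estimate for an $L^{n/2}$ potential, valid when $\norm{\sepsmin}_{L^{n/2}}$ is small.'' No such endpoint estimate exists --- this is the very failure you concede in your closing paragraph, and the paper's final remark records a counterexample on the unit ball in $\R^2$ (where $n/2=1$): potentials $f_{\eps}\ge 0$ with $f_{\eps}\to 0$ in $L^1$ whose solutions satisfy $u_{\eps}(0)\to\infty$. Positivity must instead be obtained without $L^\infty$ control: test the equation against $\min(u_{\eps},0)\in H^1_0$ and apply the Sobolev inequality to force $u_{\eps}\ge 0$ once $c_1[\geps]\,\norm{\sepsmin}_{L^{n/2}}<a_n$, then invoke the strong maximum principle. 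Relatedly, your Dirichlet problem carries a sign slip: the identity $-a_n\Delta u+s_{\geps}u=(s_{\geps})_+u$ requires $a_n\Delta u+\sepsmin u=0$, whose associated bilinear form is $a_n\int|\nabla v|^2-\int\sepsmin v^2$; this is \emph{not} coercive ``because $\sepsmin\ge 0$'' but only under the same smallness condition~\eqref{fcondition}, which is where the restriction to small $\eps$ genuinely enters.
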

In general, we do not expect that it will be possible to locally uniformly approximate continuous metrics in $W^{2, n/2}_{\loc}$ with non-negative scalar curvature by smooth metrics with non-negative scalar curvature. Our results suggest an underlying ``bubbling off'' or non-compactness phenomenon. In light of the critical Sobolev embedding of $W^{2, n/2}$, we expect, however, that the approximating metrics that we construct should converge to $\g$ in appropriate BMO or Orlicz spaces~\cite{Trudinger:Orlicz}. We have, however, not investigated this possibility.

\smallskip
Returning to the positive mass theorem, in addition to the work~\cite{Miao, McS, Lee} where metrics with particular types of singularities were considered, there is previous working aimed at lowering the regularity requirements necessary for the validity of the Witten proof of the positive mass theorem. In~\cite{Bartnik, BCPreprint} and, very recently,~\cite{LL} it was shown that the Witten proof remains valid for Riemannian metrics in various Sobolev spaces. In particular,~\cite{LL} gives a proof for metrics in $C^0(M) \cap W^{1, n}_{\loc}(M)$ on spin manifolds.%
\footnote{JG is grateful to Dan Lee for discussing the results of~\cite{LL} prior to the posting of their paper.}

As far as we are aware, there is no previous work aimed at lowering the regularity of the metric required to carry through the Schoen--Yau proof of the positive mass theorem. Our work may be interpreted as a step in this direction, although it would be of interest to determine the precise regularity conditions required to carry through the Schoen--Yau proof of the positive mass theorem directly. In light of our wish to find a synthetic approach to spaces with non-negative scalar curvature, it will become clear that the Sobolev inequality~\ref{lemma:SY} (Lemma~3.1 in~\cite{SY}) plays a crucial role in our constructions. In line with the integral conditions on the negative part of the scalar curvature that arise in our constructions, it would appear that the general geometrical framework of ``almost smooth metric-measure spaces'' with a Sobolev inequality developed in~\cite{ACM} to study the Yamabe problem on singular Riemannian spaces may also be worthy of investigation from the point of view of the positive mass theorem.

\smallskip

For the general case of non-spin manifolds, it is difficult to see how the results of the present paper can be improved upon by the methods that we have used. Both continuity and the $W^{2, n/2}_{\loc}$ nature of the metric have played an important part. At this moment, there are quite distinct differences in the regularity required to prove the positive mass theorem on spin manifolds and non-spin manifolds, particularly concerning rigidity theorems. It would certainly be of interest to determine whether there is any deep relation between the existence of spin structures on manifolds and the regularity requirements for validity of the positive mass theorem.

\medskip

This paper is organised as follows. In Sections~\ref{sec:rough} and~\ref{sec:nnsc}, we introduce the class of metrics that we wish to study, and establish some preliminary analytical results. In Section~\ref{sec:approx}, we establish the existence of appropriate smooth approximations, $\g_{\eps}$ to our rough metric $\g$ that agree with $\g$ outside of a compact set. We then prove convergence results for related geometrical quantities, such as Sobolev constants and the negative part of the scalar curvature of the metrics $\g_{\eps}$, that will be required later. In Section~\ref{sec:elliptic}, we consider the conformal rescaling of the metrics $\g_{\eps}$ to yield smooth metrics $\ghat_{\eps}$ that have non-negative scalar curvature. We develop some detailed elliptic estimates for the conformal factors required, sharpening some of the estimates developed in~\cite{SY} and~\cite{Miao}. With these estimates in hand, in Section~\ref{sec:mass}, we complete the proof of the positive mass theorem for our metrics. In addition, we establish that, if we allow the metric $\g$ to have negative scalar curvature on a compact set, and that the $L^{n/2}$ norm of $\sgmin$ is sufficiently small, then we can put a (negative) lower bound on the ADM mass $\adm(\g)$.

Section~\ref{sec:approximation} of the paper is largely independent of the positive mass theorem, and studies the approximation of metrics of regularity $W^{2, p}_{\loc}$ for some $p > \frac{n}{2}$. In Theorems~\ref{thm:lowervest} and~\ref{thm:uppervest}, we establish estimates that control the $L^{\infty}$ norm of the conformal transformations required to construct smooth approximations to the metric $\g$ by non-negative scalar curvature metrics $\ghat_{\eps}$. This requires a rather detailed Moser iteration argument. From these estimates, we establish that the conformal factors in the construction of the $\ghat_{\eps}$ converge locally uniformly to $1$, and therefore that a $W^{2, p}_{\loc}$ metric with non-negative scalar curvature in the distributional sense can be approximated locally uniformly by smooth metrics with non-negative scalar curvature. These iteration arguments break down, however, for metrics that lie in $C^0(M) \cap W^{2, n/2}_{\loc}(M)$. In particular, for given $\eps > 0$, we can only perform a finite number of Moser iterations, to give an $L^p$ estimate for the conformal factor, where $p \to \infty$ as $\eps \to 0$. As such, we establish that such metrics can be approximated by smooth metrics with non-negative scalar curvature that converge to $\g$ in $L^p_{\loc}(M)$ for all $p < \infty$. We argue, however, that one cannot expect $L^{\infty}$ convergence.

We close with some remarks concerning the rigidity part of positive mass theorem.

\section{Rough metrics}
\label{sec:rough}

Let $M$ be a smooth manifold of dimension $n \ge 3$.%
\footnote{For all of our considerations, $M$ being $C^3$ would be sufficient.}
We assume that $M$ is equipped with a Riemannian metric $\g$ and is asymptotically flat, in the sense that there
exists a compact subset $K \subset M$ such that the end $N := M \setminus K$ is diffeomorphic to $\R^n \setminus \overline{B(0, r_0)}$ for some $r_0 > 0$. We assume that the metric $\g$ is smooth on $M \setminus K$, and that, on each end, $\g$ satisfies the asymptotic conditions~\eqref{asymptotics}. In addition, we assume that the classical positive mass theorem is valid on $M$, so either $M$ is a spin-manifold of arbitrary dimension~\cite{Witten} or $n \le 7$~\cite{SY}.

In order to formulate the regularity conditions that we require of $\g$, it is convenient to introduce a smooth background Riemannian metric on $M$.%
\footnote{It is not necessary to do so, however. We could, equivalently, work in local charts.}
Let {\e} be an arbitrary smooth Riemannian metric on $M$ that agrees with $\g$ on the set $M \setminus K$.%
\footnote{This can be done without loss of generality, enlarging the set $K$ if necessary.}
We denote the inner products and norms induced by {\e} on tensor bundles on $M$ by {\ipe{\cdot}{\cdot}} and {\norme{\cdot}}, respectively. We denote by $d\mu_{\e}$ the Riemannian measure defined by the metric $\e$.

Given a tensor field, $\T$, on $M$, and $p \in [1, \infty]$, we define the $L^p$ norm of $\T$ on a set $U \subseteq M$ with respect to $\e$ to be
\[
\norm{\T}_{L^p(U)} := \left( \int_U \norm{\T}_{\e}^p \, d\mu_{\e} \right)^{1/p},
\]
with the usual extension if $p = \infty$. We will also use $L^p$ norms of tensor fields with respect to different continuous metrics, such as $\g$, in which case we denote the norm by $\norm{\cdot}_{L^p(U, \g)}$, for example. Nevertheless, the Riemannian metrics that we use are all continuous and agree outside of a compact set, so the $L^p$ norms defined with different such metrics are equivalent. A tensor field $\T$ is said to lie in $L^p_{\loc}(M)$ if $\norm{\T}_{L^p(C)} < \infty$ for each compact set $C \subset M$. (The space $L^p_{\loc}(M)$ is independent of the metric in our class used to define the $L^p$ norms.) Similarly, for $k \ge 0$ an integer and $p \in [1, \infty]$, we introduce the Sobolev norm
\[
\norm{\T}_{W^{k, p}(U)}^p := \sum_{|\alpha| \le k} \int_U \norme{\nabla_{\e}^{\alpha} \T}^p \, d\mu_{\e},
\]
where $\nabla_{\e}$ denotes the Levi-Civita connection of the background metric $\e$. Again, a tensor field $\T$ lies in the local Sobolev space $W_{\loc}^{k, p}(M)$ if $\norm{\T}_{W^{k, p}(C)} < \infty$ for all compact $C \subset M$. Again, we will use Sobolev norms with respect to different smooth metrics, which we will indicate in the norm. The Sobolev norms will again be equivalent, since the smooth metrics that we consider will agree outside of a compact set, and the metrics and their derivatives will be bounded on the compact set.

\

We are now in a position to state the regularity conditions that we require on our metric:

\medskip

\noindent{\textbf{Regularity assumption}}: We assume that the metric $\g$ is smooth on the set $M \setminus K$. In addition, we assume that the metric $\g$ is continuous and lies in the local Sobolev space $W_{\loc}^{2, n/2}(M)$, i.e. $\g \in C^0(M) \cap W_{\loc}^{2, n/2}(M)$.

\begin{remark}
If we assume $\g \in W_{\loc}^{2, p}(M)$ for some $p > \frac{n}{2}$, then the Sobolev embedding theorem implies that $\g$ is automatically continuous. However, in the critical case where $\g \in W_{\loc}^{2, n/2}(M)$, $\g$ is generally not even locally bounded. Since we will require continuity of $\g$ (cf. Remark~\ref{rem:notLinfty}), it must therefore be added as an additional assumption.
\end{remark}

\smallskip

We first observe the following.

\begin{proposition}
\label{sinLn/2}
Let $\g \in C^0(M) \cap \Wloc{2, n/2}$. Then $s_{\g} \in \Lp{n/2}$.
\end{proposition}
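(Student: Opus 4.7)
The plan is to work locally in coordinate charts and exploit the schematic structure of the scalar curvature. In a chart around any point of $M$, we may write
\[
s_{\g} = g^{ij}\,(\partial^2 \g)_{ij} + g^{ij} g^{kl}\,(\partial \g \cdot \partial \g)_{ijkl},
\]
a sum of terms linear in the second derivatives of the metric components, together with terms quadratic in the first derivatives of the metric components, each contracted with the inverse metric. Since membership of $\Ln$ is a purely local property, it suffices to show that $s_{\g} \in L^{n/2}(C)$ for each compact $C$ contained in such a chart.

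Because $\g$ is continuous and positive definite, the inverse components $g^{ij}$ are continuous, and therefore uniformly bounded on the compact set $C$. The linear term is then immediately of class $L^{n/2}(C)$: $\partial^2 \g \in \Lp{n/2}$ by the hypothesis $\g \in \Wloc{2, n/2}$, and multiplication by the bounded factor $g^{ij}$ preserves $L^{n/2}$ integrability on $C$.

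The heart of the argument is controlling the quadratic terms in $\partial \g$. For this I would invoke the critical Sobolev embedding $\Wloc{1, n/2} \hookrightarrow \Lp{n}$, applied componentwise on a precompact neighbourhood of $C$ via, for instance, the Gagliardo--Nirenberg--Sobolev inequality; note that the Sobolev conjugate exponent of $n/2$ is precisely $n$. Since $\g \in \Wloc{2, n/2}$ implies $\partial \g \in \Wloc{1, n/2}$, this yields $\partial \g \in \Lp{n}$, and hence H\"{o}lder's inequality gives $(\partial \g)(\partial \g) \in \Ln$. Multiplication by the bounded factor $g^{ij} g^{kl}$ again preserves $L^{n/2}$ integrability, and summing the two contributions produces $s_{\g} \in \Ln$. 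The delicate point is the critical nature of the Sobolev embedding at exponent $n/2$: it is precisely here that one just barely obtains $L^n$ control on the first derivatives of $\g$ from the $W^{2, n/2}$ hypothesis, which is in turn the exact integrability required to place the quadratic Christoffel-type terms in $L^{n/2}$.
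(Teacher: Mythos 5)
Your proof is correct and follows essentially the same route as the paper's: decompose the curvature schematically as $g^{..}\partial^2 g_{..} + g^{..}\partial g_{..}\partial g_{..}$, use continuity of the inverse metric for boundedness on compacta, and apply the Sobolev embedding $W^{1,n/2}_{\loc} \subseteq L^n_{\loc}$ together with H\"{o}lder's inequality to place the quadratic first-derivative terms in $L^{n/2}_{\loc}$.
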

\begin{proof}
The result is local, so we may perform the calculations in a local coordinate chart. Since $g_{..} \in W^{2, n/2}_{\loc}$, we have $\partial^2 g_{..} \in L^{n/2}_{\loc}$. The Sobolev embedding theorem implies that $\partial g_{..} \in W^{1, n/2}_{\loc} \subseteq L^n_{\loc}$. Schematically, the curvature tensor takes the form $R^{.}{}_{...} = g^{..} \partial^2 g_{..} + g^{..} \partial g_{..} \partial g_{..}$. Since $g^{..}$ is continuous, we deduce that both of the terms in this expression lie in $L^{n/2}_{\loc}$, therefore the curvature tensor of $\g$ lies in $L^{n/2}_{\loc}$. Since the scalar curvature follows from contracting the curvature tensor with the (continuous) inverse metric, it follows that the scalar curvature lies in $L^{n/2}_{\loc}$.
\end{proof}

\begin{remark}
The proposition remains valid if we make the weaker assumption that the metric lies in $\Lp{\infty} \cap \Wloc{2, n/2}$.
\end{remark}

\begin{remark}
\label{rem:SobolevBad}
We will only be concerned with scalar curvature bounds. Nevertheless, the proof of the proposition shows for metrics in $C^0(M) \cap \Wloc{2, n/2}$, the full curvature tensor lies in $\Lp{n/2}$. Ideally, we would like to consider metrics for which, for example, the scalar curvature is well-defined as a distribution, but the full curvature tensor may not be well-defined in this sense.
\end{remark}

\section{Non-negative scalar curvature and the positive mass theorem}
\label{sec:nnsc}

Let $\D(M)$ denote the collection of smooth, compactly supported test functions on $M$. We note from Lemma~\ref{sinLn/2} that $s_{\g} \in \Lp{n/2}$. It follows that the map $\D(M) \to \R$ defined by
\bel{sgdist}
\varphi \mapsto \la s_{\g}, \varphi \ra := \int_M s_{\g} \varphi \, d\mu_{\g}
\ee
is a well-defined distribution on $M$.

\begin{definition}
\label{def:snonneg}
We will say that the metric $\g$ has \emph{non-negative scalar curvature in the distributional sense\/} if $s_{\g} \ge 0$ in $\Dprime(M)$, i.e. for all $\varphi \in \D(M)$ with $\varphi \ge 0$, we have
\bel{nonnegscalar}
\la s_{\g}, \varphi \ra \ge 0.
\ee
\end{definition}

\smallskip
The main result of the first part of this paper is the following.

\begin{theorem}
\label{thm}
Let $(M, \g)$ be an asymptotically flat Riemannian manifold that satisfies our regularity conditions with non-negative scalar curvature in the distributional sense. Then the ADM mass of $(M, \g)$ is non-negative.
\end{theorem}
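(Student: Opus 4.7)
The plan is to follow the approximation strategy outlined in the introduction. First I would construct a family of smooth Riemannian metrics $\{\geps\}_{\eps > 0}$ on $M$ such that $\geps = \g$ on the complement of a slightly enlarged compact set $K' \supset K$, with $\geps \to \g$ uniformly on $K'$ and $\geps \to \g$ in $\Wloc{2, n/2}$. This can be done via a finite atlas around $K$, componentwise mollification in charts, a partition of unity, and a smooth cutoff interpolating to $\g$ outside $K'$. Since $\geps$ agrees with $\g$ near infinity, each $\geps$ satisfies the asymptotic conditions~\eqref{asymptotics} and $\adm(\geps) = \adm(\g)$. Moreover, by the local calculation in Proposition~\ref{sinLn/2}, the regularity $\g, \geps \in C^0 \cap \Wloc{2, n/2}$ gives $s_{\geps} \to s_{\g}$ in $\Lp{n/2}$.

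Because $s_{\g} \ge 0$ distributionally while $s_{\geps}$ is smooth and converges in $\Lp{n/2}$, a duality argument against non-negative test functions shows $\norm{\sepsmin}_{L^{n/2}(M, \geps)} \to 0$, with support contained in $K'$. Together with convergence of Sobolev constants $c_1[\geps] \to c_1[\g]$, for all sufficiently small $\eps > 0$ the smallness condition
\[
c_1[\geps] \, \norm{\sepsmin}_{L^{n/2}(M, \geps)} < 4 \, \frac{n-1}{n-2}
\]
holds. In this regime one solves the conformal Laplacian problem
\[
- \frac{4(n-1)}{n-2} \Delta_{\geps} u_\eps + s_{\geps} u_\eps = 0, \qquad u_\eps \to 1 \text{ at infinity},
\]
for a positive solution $u_\eps$: the smallness makes the associated bilinear form coercive via the Sobolev inequality. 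Setting $\ghat_{\eps} := u_\eps^{4/(n-2)} \geps$ gives a smooth, asymptotically flat metric with $s_{\ghat_{\eps}} \equiv 0$, so the classical positive mass theorem gives $\adm(\ghat_{\eps}) \ge 0$.

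It remains to show $\adm(\ghat_{\eps}) \to \adm(\g)$ as $\eps \to 0$. Under the conformal rescaling, the ADM mass differs from $\adm(\geps) = \adm(\g)$ by a boundary term at infinity involving first derivatives of $u_\eps - 1$. Elliptic estimates applied to $u_\eps - 1$, with source $s_{\geps}$ controlled pointwise outside $K'$ (where $\geps$ is smooth with the prescribed decay) and in $\Lp{n/2}$ on $K'$ by $\sepsmin$, yield $u_\eps - 1 = O(|\x|^{-(n-2)})$ with constants tending to zero as $\eps \to 0$. Hence the boundary term vanishes in the limit, proving $\adm(\ghat_\eps) \to \adm(\g)$ and therefore $\adm(\g) \ge 0$. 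The hard part will be the conformal step: the critical embedding $\Wloc{2, n/2} \not\hookrightarrow L^\infty_{\loc}$ means that the $L^{n^*}$ bound on $u_\eps$ furnished directly by the Sobolev inequality does not pass to a pointwise bound, so a careful Moser iteration driven by the smallness of $c_1[\geps] \, \norm{\sepsmin}_{L^{n/2}}$ is needed both to establish existence of a positive $u_\eps$ bounded away from zero and to upgrade the $\Lp{n/2}$ control on $\sepsmin$ to the uniform decay required for the mass boundary integral.
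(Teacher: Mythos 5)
Your overall strategy coincides with the paper's: mollify on a neighbourhood of $K$, show $\norm{\sepsmin}_{L^{n/2}} \to 0$ by testing the distributional inequality against non-negative functions, conformally rescale, apply the classical theorem to the rescaled metrics, and pass to the limit in the mass. The step that fails as written is the conformal step combined with the final limit. You solve $-\tfrac{4(n-1)}{n-2}\Delta_{\geps}u_{\eps} + s_{\geps}u_{\eps} = 0$, i.e.\ you use the \emph{full} scalar curvature as the potential so that $\ghat_{\eps}$ is scalar-flat, and you then assert that $u_{\eps}-1 = O(|\x|^{-(n-2)})$ ``with constants tending to zero as $\eps \to 0$'', so that the correction to the mass vanishes in the limit. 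That assertion is false in general: the source term in the equation for $v_{\eps} := u_{\eps}-1$ is $\tfrac{1}{a_n}s_{\geps}$, which converges to $\tfrac{1}{a_n}s_{\g}$, \emph{not} to zero --- only its negative part does. Unless $s_{\g} \equiv 0$, the $u_{\eps}$ converge to a nontrivial limit $u_0 \not\equiv 1$, and the coefficient $A_{\eps}$ in the expansion $v_{\eps} = A_{\eps}|\x|^{2-n} + O(|\x|^{1-n})$ converges to a strictly negative limit, as one sees from the identity $A_{\eps} = -\frac{1}{(n-2)\omega_{n-1}}\int_M \bigl[\, |\nabla u_{\eps}|_{\geps}^2 + \tfrac{1}{a_n} s_{\geps} u_{\eps}^2 \,\bigr]\, d\mu_{\geps}$. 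Hence $\adm(\ghat_{\eps}) \not\to \adm(\g)$ along your construction.

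There are two repairs. The paper's choice is to put only the negative part in the equation, $\Delta_{\geps}u_{\eps} + \tfrac{1}{a_n}\sepsmin u_{\eps} = 0$: then $\ghat_{\eps}$ has non-negative (rather than zero) scalar curvature, which is all the classical theorem requires, and since $\norm{\sepsmin}_{L^{n/2}} \to 0$ one genuinely gets $v_{\eps} \to 0$ and $A_{\eps} \to 0$, whence $\adm(\ghat_{\eps}) \to \adm(\g)$. Alternatively, you may keep the scalar-flat normalisation, but you must then replace ``$A_{\eps} \to 0$'' by the one-sided bound $\limsup_{\eps \to 0} A_{\eps} \le 0$, which does follow from the displayed identity: the gradient term has a favourable sign, and $\int_M s_{\geps}u_{\eps}^2\, d\mu_{\geps} \ge -\norm{\sepsmin}_{L^{n/2}}\, \norm{u_{\eps}}_{L^{n^*}(\supp \sepsmin)}^2 \to 0$ once the $L^{n^*}$ bound on $v_{\eps}$ is in hand; this suffices because $\adm(\g) = \adm(\ghat_{\eps}) - 2A_{\eps} \ge -2A_{\eps}$. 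In either case no pointwise decay of $u_{\eps}-1$ with small constants is needed, and no Moser iteration is needed for the mass theorem itself --- the paper's mass calculation uses only the $L^{n^*}$ bound on $v_{\eps}$ and the $L^2$ bound on $\nabla v_{\eps}$; the iteration arguments enter only in the later, separate approximation theorems.
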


Our strategy of proof is as follows. In Lemma~\ref{lemma:smooth} below, we show that we can smooth out the metric $\g$ to yield a family of smooth metrics $\g_{\eps}$ that coincide with the metric $\g$ outside of a compact subset, and converge to $\g$ in $C^0(M) \cap W^{2, n/2}_{\loc}(M)$ as $\eps \to 0$. Since the $\g_{\eps}$ have the same asymptotic behaviour as $\g$, the ADM mass of the $\g_{\eps}$ is equal to that of $\g$. However, the metrics $\g_{\eps}$ will not, in general, have non-negative scalar curvature. Nevertheless, in Proposition~\ref{thm:W2p} below, we show that the $L^{n/2}$ norm of the negative part of $s_{\g_{\eps}}$ is bounded, and converges to zero as $\eps \to 0$. Following the approach of Miao~\cite{Miao}, we show in Section~\ref{sec:elliptic} that the metrics $\g_{\eps}$ may be conformally rescaled to give asymptotically flat metrics, $\ghat_{\eps}$, with non-negative scalar curvature. Since the metrics $\ghat_{\eps}$ are smooth and satisfy the conditions of the classical positive mass theorem, we deduce that $\adm(\ghat_{\eps}) \ge 0$. An analysis of the asymptotics of the $\ghat_{\eps}$ in Section~\ref{sec:mass} also shows that $\adm(\ghat_{\eps}) \to \adm(\g)$ as $\eps \to 0$. Hence $\adm(\g) \ge 0$, as required.

\

Before proceeding with the steps of our proof, some remarks are in order concerning Definition~\ref{def:snonneg}.

\smallskip

\begin{remark}
On $M \setminus K$, the metric $\g$ is smooth, so the scalar curvature $s_{\g}$ is a well-defined smooth quantity on this set. On $M \setminus K$, the condition~\eqref{nonnegscalar} is therefore equivalent to the condition that $s_{\g} \ge 0$ as a smooth function. However, inside the set $K$ the metric $\g$ is not assumed to be $C^2$, so~\eqref{nonnegscalar} implies non-negativity of the scalar curvature is only imposed in the weak sense.
\end{remark}

\begin{remark}
Since $s_{\g}$ lies in $L^{n/2}_{\loc}(M)$, \eqref{sgdist} implies that
\[
|\la s_{\g}, \varphi \ra| \le \norm{s_{\g}}_{L^{n/2}(K, \g)} \norm{\varphi}_{L^{\frac{n}{n-2}}(K, \g)},
\]
for all $\varphi \in \D(M)$ with support contained in a compact set $K$. It follows by density and continuity that the map~\eqref{sgdist} extends to the space of compactly supported functions in $L^{\frac{n}{n-2}}(M)$.
\end{remark}

\begin{remark}
In~\eqref{sgdist}, we have defined the action of $s_{\g}$ as a distribution using the measure $d\mu_{\g}$ for the rough metric $\g$. This seems appropriate, as it means that the distribution is independent of the background metric $\e$. Moreover, we will largely be concerned with the action of $s_{\g}$ on elements of $L^p$ spaces, which are independent of the continuous metric agreeing with $\g$ outside of a compact set used in the definition of the $L^p$ norm.
\end{remark}

\section{Smooth approximations of rough metrics}
\label{sec:approx}

Our method of proof involves smooth approximations of the metric $\g$. The following result provides us with smooth approximations with appropriate properties.

\begin{lemma}
\label{lemma:smooth}
For all $\eps > 0$, there exists a smooth Riemannian metric $\g_{\eps}$ and a compact set $K_{\eps} \subset M$ with the following properties:
\begin{enumerate}
\item\label{1} $\g_{\eps}$ converge to $\g$ locally uniformly and in $\Wloc{2, n/2}$ as $\eps \to 0$;
\item\label{2} $\g_{\eps}$ coincide with the metric $\g$ on the set $M \setminus K_{\eps}$;
\item\label{3} $K_{\eps}$ converge to $K$ as $\eps \to 0$.%
\end{enumerate}
\end{lemma}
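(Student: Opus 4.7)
The plan is to construct $\g_\eps$ by mollifying $\g$ in a shrinking compact neighborhood of $K$, then blending smoothly with $\g$ far from $K$, exploiting that $\g$ is already smooth on $M\setminus K$. I would fix a function $\delta(\eps) \to 0$ as $\eps \to 0$ and set the compact sets
\[
K_\eps := \{x \in M : d_\e(x, K) \le \delta(\eps)\}, \qquad K_\eps^{\mathrm{in}} := \{x \in M : d_\e(x, K) \le \delta(\eps)/2\}.
\]
Covering $K_\eps$ by finitely many coordinate charts, I define a Friedrichs mollifier $S_{\eps'}$ of scale $\eps'$, acting componentwise on the metric via a subordinate partition of unity; this produces a smooth tensor field $S_{\eps'}\g$ which approximates $\g$ in $L^\infty$ and in $\Wloc{2, n/2}$ on any fixed compact neighborhood of $K$ as $\eps' \to 0$. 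Let $\chi_\eps$ be a smooth cutoff with $\chi_\eps \equiv 1$ on $K_\eps^{\mathrm{in}}$, $\chi_\eps \equiv 0$ on $M \setminus K_\eps$, and $\|\partial^\alpha\chi_\eps\|_{L^\infty} = O(\delta(\eps)^{-|\alpha|})$. The proposed approximant is
\[
\g_\eps := \chi_\eps \cdot S_{\eps'}\g + (1-\chi_\eps) \cdot \g.
\]

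Smoothness of $\g_\eps$ is then immediate: $\chi_\eps S_{\eps'}\g$ is smooth, while $(1-\chi_\eps)\g$ vanishes on the open neighborhood $\mathrm{int}(K_\eps^{\mathrm{in}})$ of $K$ and coincides with a product of smooth factors on the complement $M \setminus K_\eps^{\mathrm{in}}$ (since $\g$ is smooth on $M \setminus K$ and $K \subset K_\eps^{\mathrm{in}}$). The property $\g_\eps = \g$ on $M \setminus K_\eps$ is clear from $\chi_\eps \equiv 0$ there, and $K_\eps \to K$ holds because $\delta(\eps) \to 0$. Positive definiteness of $\g_\eps$ for all sufficiently small $\eps$ follows from positive definiteness of $\g$ together with the uniform convergence.

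The main technical point is convergence in $\Wloc{2, n/2}$. Writing $\g_\eps - \g = \chi_\eps(S_{\eps'}\g - \g)$, local uniform convergence is straightforward from continuity of $\g$ and standard mollifier estimates. For the Sobolev norm, expansion yields
\[
\partial^2(\g_\eps - \g) = \chi_\eps\, \partial^2(S_{\eps'}\g - \g) \;+\; 2\,\partial\chi_\eps \cdot \partial(S_{\eps'}\g - \g) \;+\; \partial^2\chi_\eps \cdot (S_{\eps'}\g - \g).
\]
The first term goes to zero in $L^{n/2}$ by standard mollifier theory in Sobolev spaces applied to $\g \in \Wloc{2, n/2}$. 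The main obstacle is the remaining two terms, whose prefactors blow up as $\delta(\eps)^{-1}$ and $\delta(\eps)^{-2}$. The saving observation is that these terms are supported in the transition annulus $K_\eps \setminus K_\eps^{\mathrm{in}} \subset M \setminus K$, where $\g$ itself is smooth. Requiring $\eps' \ll \delta(\eps)$ ensures that $S_{\eps'}\g$ on this annulus only samples $\g$ from its smooth part, so Taylor expansion against a symmetric mollifier yields $\|\partial^\alpha(S_{\eps'}\g - \g)\|_{L^\infty(\text{transition})} = O({\eps'}^2)$ for $|\alpha| \le 2$. A balance such as $\delta(\eps) = \sqrt{\eps}$ and $\eps' = \eps$ then makes both cross-terms $O(\eps)$ in $L^{n/2}$ on any fixed compact set, completing the argument.
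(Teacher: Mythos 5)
Your construction is correct in outline but organised differently from the paper's. The paper sidesteps your cross-terms entirely: it takes a \emph{fixed} squared partition of unity $\chi_N^2+\sum_i\chi_i^2=1$ subordinate to a finite chart cover of $K$, mollifies each localised piece $\chi_i\g$ in its chart, and sets $\g_\eps=\chi_N^2\g+\sum_i\chi_i\,\bigl(\rho_\eps\star(\chi_i\g)\bigr)$. Since the cutoffs do not depend on $\eps$, convergence in $C^0_{\loc}$ and $W^{2,n/2}_{\loc}$ reduces to standard mollifier convergence of each compactly supported piece, with no terms carrying $\delta(\eps)^{-1}$ or $\delta(\eps)^{-2}$. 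Your $\eps$-dependent cutoff buys a cleaner version of property (2) --- $\g_\eps$ literally equals $\g$ outside $K_\eps$ by construction --- at the price of the two cross-terms, and your key observation that these are supported in the annulus $K_\eps\setminus K_\eps^{\mathrm{in}}\subset M\setminus K$, where $\g$ is smooth, is the right one.

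One step is not justified as written: the claim $\|\partial^\alpha(S_{\eps'}\g-\g)\|_{L^\infty(\text{transition})}=O({\eps'}^2)$ with a constant \emph{uniform in $\eps$}. The Taylor-expansion constant involves $\sup|\partial^{\alpha+2}\g|$ on an $\eps'$-neighbourhood of the annulus, and as $\delta(\eps)\to0$ the annulus approaches $\partial K$, near which higher derivatives of $\g$ need not be bounded: the hypotheses give only $\g\in C^0\cap W^{2,n/2}_{\loc}$ globally and smoothness on the \emph{open} set $M\setminus K$. Hence the explicit balance $\delta(\eps)=\sqrt{\eps}$, $\eps'=\eps$ is not warranted. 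The repair is a diagonal choice: for each fixed $\eps$ the closed annulus is a compact subset of $M\setminus K$, so $S_{\eps'}\g\to\g$ in $C^2$ on it as $\eps'\to0$; pick $\eps'(\eps)$ small enough (after $\delta(\eps)$ is fixed) that both cross-terms are at most $\eps$ in $L^{n/2}$. With that adjustment your argument is complete.
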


\begin{proof}[Proof of Lemma~\ref{lemma:smooth}]
Generally, the existence of such a family of smooth approximating metrics follows from density of smooth metrics in $\Wloc{2, n/2} \cap C^0(M)$. More explicitly, we may proceed as follows. We cover $K$ by a finite collection of open coordinate charts $\psi_i \colon O_i \to B(0, 1) \subset \R^n$, $i = 1, \dots, m$ with the property that $K \subset \cup_{i=1}^m O_i$ and $M = N \cup \left( \cup_{i=1}^m O_i \right)$, where $N := M \setminus K$. Let $\chi_i$, $i = 1, \dots, m$ and $\chi_N$ be a smooth squared partition of unity (i.e. $\chi_N^2+\sum_{i=1}^m{\chi_i^2}=1$) subordinate to the cover of $M$ defined by $O_i$ and $N$ with the property that the functions $\chi_i \circ \psi_i^{-1}$ have compact support contained in the set $B(0, 1)$ and that $\chi_N$ has support bounded away from $\partial N$. We decompose the metric using the partition of unity, letting $\g_i := \chi_i \g$, $i = 1, \dots, m$ and $\g_N := \chi_N \g$ be $(0, 2)$ tensor fields on $M$ with support contained in $O_i$ and $N$, respectively. For $i = 1, \dots, m$, we define the $(0, 2)$ tensor field $\mathbf{G}_i := \left( \psi_i^{-1} \right)^* \g_i$ on $B(0, 1) \equiv \psi_i (O_i)$, which has compact support contained away from the boundary of $B(0, 1)$. In terms of the coordinates $x_i^{\alpha}$ on $\psi_i(O_i) \subset \R^n$, the components $G_{i, \alpha\beta}$ of $\mathbf{G}_i$ are continuous and lie in $W_{\loc}^{2, n/2}(B(0, 1))$. Let $\rho \colon \R^n \to \R$ be a smooth, positive mollifier with $\supp \rho \subset B(0, 1)$ and $\int_{B(0, 1)} \rho = 1$. Let $\rho_{\eps}(\x) := \eps^{-n} \rho \left( \frac{\x}{\eps} \right)$. We construct smoothed versions of the tensor fields $\mathbf{G}_i$ by taking the scaled convolution of the components $G_{i,\eps; \alpha\beta}(x) := \left( \rho_{\eps} \star G_{i, \alpha\beta} \right)(x)$ for $x \in B(0, 1)$. Since $\mathbf{G}_i$ has support bounded away from $\partial B(0, 1)$, it follows that there exists $\eps_0 > 0$ such that, for all $\eps < \eps_0$, the $G_{i, \eps; \alpha\beta}$ will have support bounded away from $\partial B(0, 1)$ for $i = 1, \dots, m$. From the components $G_{i, \eps; \alpha\beta}$, we now reconstruct the smooth $(0, 2)$ tensor fields $\mathbf{G}_{i, \eps}$ on $B(0, 1)$. Let $\g_{i, \eps} := \left( \psi_i \right)^* \mathbf{G}_{i, \eps}$, for $i = 1, \dots, m$. We now define the $(0, 2)$ tensor field on $M$
\[
\g_{\eps} := \chi_N \g_N + \sum_{i = 1}^m \chi_i \g_{i, \eps}.
\]
(Note that we have not changed the (smooth) metric $\g_N$.) By the convergence properties of smoothing with mollifiers, the $(0, 2)$ tensor fields $\g_{i, \eps}$ will converge to $\g_i$ both locally uniformly and in $\Wloc{2, n/2}$ as $\eps \to 0$. Since $\g_i = \chi_i \g$, we therefore have that, both locally uniformly and in $\Wloc{2, n/2}$,
\[
\g_{\eps} \to \chi_N \g_N + \sum_{i = 1}^m \chi_i \g_i = \chi_N^2 \g + \sum_{i=1}^m \chi_i^2 \g = \g,
\]
since $(\chi_1, \dots, \chi_m, \chi_N)$ is a squared partition of unity. Therefore, Condition~(\ref{1}) is satisfied. Since we have not modified the metric $\g_N$, and the $\g_{i, \eps}$ only differ from $\g_i$ on a set of size $\eps$, it follows that the $\g_{\eps}$ will coincide with $\g$ on an $\eps$-neighbourhood of the set $K$. We define $K_{\eps}$ to be the closure of this set, which is automatically compact. With this definition of $K_{\eps}$, Conditions~(\ref{2}) and~(\ref{3}) are satisfied.
\end{proof}

\begin{proposition}
\[
\adm(\g_{\eps}) = \adm(\g).
\]
\end{proposition}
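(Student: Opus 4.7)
The plan is to exploit the fact that the ADM mass is a purely asymptotic invariant: it is computed as a limit of surface integrals over coordinate spheres $S_r$ in the end $N = M \setminus K$, where the integrand depends only on the components of the metric and their first derivatives in the asymptotic chart. Since Lemma~\ref{lemma:smooth}(\ref{2}) guarantees that $\g_{\eps}$ coincides with $\g$ outside the compact set $K_{\eps}$, the two integrands will agree for all sufficiently large spheres, so the limits must be identical.

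More concretely, I would first choose $R > r_0$ large enough that $K_{\eps} \subset \Phi\bigl(\overline{B(0, R)}\bigr)$; this is possible because $K_{\eps}$ is compact and $\Phi$ is a diffeomorphism of $\R^n \setminus \overline{B(0, r_0)}$ onto $M \setminus K$ (and $K_{\eps}$ differs from $K$ only by an $\eps$-neighbourhood inside a fixed relatively compact set). For every $r > R$, the coordinate sphere $S_r = \Phi(\{|\x| = r\})$ lies in the region $M \setminus K_{\eps}$ on which $\g_{\eps} \equiv \g$; in particular, the pull-back components $(\Phi^*\g_{\eps})_{ij}$ and their first partial derivatives $\partial_k (\Phi^*\g_{\eps})_{ij}$ agree identically with those of $\g$ along $S_r$. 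Consequently
\[
\int_{S_r} \bigl( (g_{\eps})_{ij, i} - (g_{\eps})_{ii, j} \bigr) \nu_i \, d\mu_0
= \int_{S_r} \bigl( g_{ij, i} - g_{ii, j} \bigr) \nu_i \, d\mu_0
\]
for every $r > R$, and passing to the limit $r \to \infty$ in the definition of $\adm$ yields the claim.

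There is essentially no analytic obstacle here; the only point worth verifying carefully is that $\g_{\eps}$ itself genuinely satisfies the asymptotic fall-off conditions~\eqref{asymptotics} (so that its ADM mass is defined in the first place). This is immediate from Lemma~\ref{lemma:smooth}: outside $K_{\eps}$ the metric $\g_{\eps}$ coincides with $\g$, which by hypothesis is smooth on $M \setminus K$ and satisfies~\eqref{asymptotics}, and $K_{\eps}$ is a compact subset of $M$, so the asymptotic behaviour of $\g_{\eps}$ in the end is literally identical to that of $\g$. The integrability $s_{\g_{\eps}} \in L^1$ on the asymptotic end likewise transfers from $\g$. With these conditions in hand, Bartnik's result quoted earlier guarantees well-definedness and coordinate-independence of both masses, and the identity above completes the argument.
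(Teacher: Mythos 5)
Your argument is correct and is exactly the paper's proof, merely written out in more detail: since $\g_{\eps} = \g$ outside the compact set $K_{\eps}$, the surface integrals defining the mass coincide for all sufficiently large coordinate spheres, so the limits agree. The extra verification that $\g_{\eps}$ inherits the fall-off conditions~\eqref{asymptotics} from $\g$ is a sensible (if implicit in the paper) addition.
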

\begin{proof}
By construction, $\g_{\eps} = \g$ on $M \setminus K_{\eps}$. Since the ADM mass is determined by the asymptotic properties of a metric, the result follows.
\end{proof}

\begin{lemma}
\label{lemma:rho}
There exists $\rho(\eps) \ge 1$ with the property that
\bel{equivalence}
\frac{1}{\rho(\eps)} \g_{\eps} \le \g \le \rho(\eps) \g_{\eps}
\ee
as bilinear forms on $M$, with $\rho(\eps) \to 1$ as $\eps \to 0$.
\end{lemma}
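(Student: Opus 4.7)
The plan is to exploit two complementary facts. First, by property~(\ref{2}) of Lemma~\ref{lemma:smooth}, outside the compact set $K_{\eps}$ we have $\g_{\eps} = \g$ identically, so the required bilinear-form inequality holds trivially there with constant $1$. Second, by property~(\ref{3}), the sets $K_{\eps}$ are eventually contained in a fixed compact neighbourhood $K'$ of $K$, and on $K'$ we have local uniform convergence $\g_{\eps} \to \g$ by property~(\ref{1}). Combined with continuity and positive-definiteness of $\g$, this will give a pointwise bilinear-form estimate with constant tending to $1$.

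Concretely, first I would choose such a compact set $K'$ containing $K_{\eps}$ for all $\eps$ smaller than some $\eps_{0}$. Since $\g$ is continuous and positive-definite and $K'$ is compact, comparison with the smooth background metric $\e$ yields constants $0 < c_{0} \le C_{0}$ such that
\[
c_{0}\, \norme{v}^{2} \le \g_{x}(v, v) \le C_{0}\, \norme{v}^{2}
\]
for every $x \in K'$ and every $v \in T_{x}M$.

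Next, local uniform convergence on $K'$ gives, for each $\eta > 0$, some $\eps_{1}(\eta) > 0$ such that $\sup_{x \in K'} \norme{\g_{\eps}(x) - \g(x)} < \eta$ for all $\eps < \eps_{1}(\eta)$. Combined with the lower bound above, this yields
\[
\bigl| (\g_{\eps} - \g)(v, v) \bigr| \le \eta\, \norme{v}^{2} \le \frac{\eta}{c_{0}}\, \g(v, v)
\]
on $K'$, so
\[
\Bigl( 1 - \tfrac{\eta}{c_{0}} \Bigr) \g(v, v) \le \g_{\eps}(v, v) \le \Bigl( 1 + \tfrac{\eta}{c_{0}} \Bigr) \g(v, v).
\]
Choosing $\eta = \eta(\eps) \to 0$ as $\eps \to 0$ (with $\eta(\eps) < c_{0}/2$, say, so both bounds stay strictly positive) and defining
\[
\rho(\eps) := \max\!\left\{ \bigl( 1 - \tfrac{\eta(\eps)}{c_{0}} \bigr)^{-1},\ 1 + \tfrac{\eta(\eps)}{c_{0}} \right\}
\]
gives $\rho(\eps) \ge 1$ with $\rho(\eps) \to 1$, and the required inequality holds on $K'$. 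Outside $K'$ the metrics coincide, so the inequality extends to all of $M$ with the same $\rho(\eps)$.

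I do not expect a genuine obstacle: the argument is essentially a compactness-plus-continuity exercise. The only mild subtlety is that we cannot rely on any Sobolev-strength convergence here (in the critical case $W^{2, n/2}_{\loc}$ this would not even give boundedness of $\g$); what is actually used is precisely the locally uniform convergence guaranteed by Lemma~\ref{lemma:smooth}\ref{1}, together with the \emph{a priori} assumption that $\g$ is continuous, which is why continuity was imposed as part of the regularity assumption in the first place.
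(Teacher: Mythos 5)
Your proposal is correct and follows essentially the same route as the paper: locally uniform convergence on a compact set containing the $K_{\eps}$, combined with continuity and positive-definiteness of $\g$ there, gives the bilinear-form equivalence with constant tending to $1$, and the identity $\g_{\eps} = \g$ off $K_{\eps}$ extends it to all of $M$. The paper's proof is just a terser version of the same compactness-plus-continuity argument, so no further comment is needed.
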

\begin{proof}
By construction, the $\g_{\eps}$ converge uniformly to $\g$ on compact subsets. Taking the compact set to be $K_{\eps}$, it follows that there exists $\rho(\eps)$ such that~\eqref{equivalence} holds on the set $K_{\eps}$. Since $\g_{\eps}$ coincides with $\g$ on $M \setminus K_{\eps}$, it follows that~\eqref{equivalence} holds globally on $M$ for each $\eps > 0$. Uniform convergence of the metrics on $K$ and the fact that $K_{\eps} \to K$ as $\eps \to 0$ implies that $\rho(\eps) \to 1$ as $\eps \to 0$.
\end{proof}

\begin{proposition}
\label{W2pconvergence}
Let $\g \in C^0(M) \cap \Wloc{2, n/2}$ and $\geps$ as in Lemma~\ref{lemma:smooth}. Then $s_{\geps} \to s_{\g}$ in $\Lp{n/2}$ as $\eps \to 0$.
\end{proposition}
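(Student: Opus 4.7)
The statement is purely local, so the plan is to work inside a fixed coordinate chart $(U, \psi)$ with $U \Subset M$, and show $s_{\geps} \to s_{\g}$ in $L^{n/2}(U)$. A finite cover of any compact set by such charts then yields the result. In the chart, the scalar curvature has the schematic form
\[
s_{\g} = A(g^{-1}) \, \partial^2 g_{..} + B(g^{-1}) \, \partial g_{..} \, \partial g_{..},
\]
with $A, B$ polynomial in the components of the inverse metric, so the task reduces to controlling the convergence of these two types of terms.

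The inputs I would use are the following. From Lemma~\ref{lemma:smooth}: $\geps \to \g$ locally uniformly and $\geps \to \g$ in $\Wloc{2, n/2}$. Since $\g$ is continuous and positive definite, on any fixed compact set the matrices $(g_{\eps,\alpha\beta})$ are uniformly bounded and uniformly invertible for $\eps$ small, so $g_\eps^{\alpha\beta} \to g^{\alpha\beta}$ uniformly on compacts; hence $A(g_\eps^{-1}) \to A(g^{-1})$ and $B(g_\eps^{-1}) \to B(g^{-1})$ uniformly on $U$. Moreover $\partial^2 g_\eps \to \partial^2 g$ in $L^{n/2}_{\loc}$ directly, and $\partial g_\eps \to \partial g$ in $W^{1, n/2}_{\loc}$. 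The critical Sobolev embedding $W^{1, n/2} \hookrightarrow L^n$ (valid since the conjugate exponent of $n/2$ is $(n/2)^* = n$) then upgrades the latter to strong convergence $\partial g_\eps \to \partial g$ in $L^n_{\loc}$.

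For the $\partial^2 g$ term I would apply the triangle inequality
\[
\norm{A(g_\eps^{-1})\partial^2 g_\eps - A(g^{-1})\partial^2 g}_{L^{n/2}(U)} \le \norm{A(g_\eps^{-1})}_{L^{\infty}(U)} \norm{\partial^2 g_\eps - \partial^2 g}_{L^{n/2}(U)} + \norm{A(g_\eps^{-1}) - A(g^{-1})}_{L^{\infty}(U)} \norm{\partial^2 g}_{L^{n/2}(U)},
\]
both of which vanish as $\eps \to 0$. For the quadratic $\partial g \, \partial g$ term I would write
\[
\partial g_\eps \, \partial g_\eps - \partial g \, \partial g = (\partial g_\eps - \partial g) \, \partial g_\eps + \partial g \, (\partial g_\eps - \partial g),
\]
and use H\"older with exponents $n, n$ (giving $n/2$):
\[
\norm{\partial g_\eps \, \partial g_\eps - \partial g \, \partial g}_{L^{n/2}(U)} \le \norm{\partial g_\eps - \partial g}_{L^n(U)} \bigl( \norm{\partial g_\eps}_{L^n(U)} + \norm{\partial g}_{L^n(U)} \bigr) \to 0,
\]
since the $L^n$ factor in parentheses remains bounded (a convergent sequence is bounded). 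Combined with the uniform convergence of $B(g_\eps^{-1})$, this handles the second term.

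There is no real obstacle, only a subtle point: the $\partial g \, \partial g$ contribution is genuinely critical for $\g \in W^{2, n/2}_{\loc}$, and its convergence is not a direct consequence of the $W^{2, n/2}$ convergence of $\geps$. It relies on the borderline Sobolev embedding $W^{1, n/2} \hookrightarrow L^n$ to convert strong first-derivative convergence at the Sobolev scale into the $L^n$ convergence needed for H\"older pairing at the scaling-invariant exponent $n/2$. This is the same critical scaling that underlies the fact that $s_{\g} \in L^{n/2}_{\loc}$ to begin with (Proposition~\ref{sinLn/2}), so it is fitting that the approximation argument respects exactly this scaling.
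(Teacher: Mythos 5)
Your argument is correct and is essentially identical to the paper's proof: both work locally in charts with the schematic form $s_{\g} \sim g^{..}\partial^2 g_{..} + g^{..}\partial g_{..}\partial g_{..}$, use local uniform convergence of the (inverse) metric, $L^{n/2}_{\loc}$ convergence of second derivatives, the Sobolev embedding $W^{1,n/2}_{\loc} \hookrightarrow L^n_{\loc}$ for the first derivatives, and H\"older with exponents $(n,n)$ on the quadratic term. The only cosmetic difference is that the paper expands the difference into five explicit terms rather than grouping the coefficient and derivative factors as you do.
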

\begin{proof}
Again, this is a local calculation. Since $g_{..} \in W^{2, n/2}_{\loc}$, we have $\partial^2 g_{\eps ..} \to \partial^2 g_{..} \in L^{n/2}_{\loc}$. By the Sobolev embedding theorem, $\partial \g_{..} \in L^n_{\loc}$, so $\partial g_{\eps, ..} \to \partial g_{..}$ in $L^n_{\loc}$. Finally, since $g_{..}$ are assumed continuous, the $g_{\eps, ..}$ converge locally uniformly to $g_{..}$ as $\eps \to 0$. Given any compact set $C$, we then have
\begin{align}
\norm{R[\geps]^{.}{}_{...} - R[\g]^{.}{}_{...}}_{L^{n/2}(C)}
&\sim \norm{g_{\eps}^{..} \partial^2 g_{\eps ..} + g_{\eps}^{..} \partial g_{\eps ..} \partial g_{\eps ..} - g^{..} \partial^2 g_{..} - g^{..} \partial g_{..} \partial g_{..}}_{L^{n/2}(C)}
\nonumber
\\
&\le \norm{g_{\eps}^{..} \partial^2 g_{\eps, ..} - g^{..} \partial^2 g_{..}}_{L^{n/2}(C)}
+ \norm{g_{\eps}^{..} \partial g_{\eps, ..} \partial g_{\eps ..} - g^{..} \partial g_{..} \partial g_{..}}_{L^{n/2}(C)}
\nonumber
\\
&\le \norm{ g_{\eps}^{..} \left( \partial^2 g_{\eps ..} - \partial^2 g_{..} \right)}_{L^{n/2}(C)}
+ \norm{\left( g_{\eps}^{..} - g^{..} \right) \partial^2 g_{..}}_{L^{n/2}(C)}
\nonumber
\\
&\hskip 1cm + \norm{g_{\eps}^{..} \partial g_{\eps, ..} \left( \partial g_{\eps ..} - \partial g_{..} \right)}_{L^{n/2}(C)}
+ \norm{g_{\eps}^{..} \left( \partial g_{\eps ..} - \partial g_{..} \right) \partial g_{..}}_{L^{n/2}(C)}
\nonumber
\\
&\hskip 1cm + \norm{\left( g_{\eps}^{..} - g^{..} \right) \partial g_{..} \partial g_{..}}_{L^{n/2}(C)}
\nonumber
\\
&=: I + II + III + IV + V.
\label{12345}
\end{align}
We then have
\begin{align*}
I &\le \norm{g_{\eps}^{..}}_{L^{\infty}(C)} \norm{\partial^2 g_{\eps ..} - \partial^2 g_{..}}_{L^{n/2}(C)},
\\
II &\le \norm{g_{\eps}^{..} - g^{..}}_{L^{\infty}(C)} \norm{\partial^2 g_{..}}_{L^{n/2}(C)},
\\
III &\le \norm{g_{\eps}^{..}}_{L^{\infty}(C)} \norm{\partial g_{\eps ..}}_{L^n(C)} \norm{\partial g_{\eps ..} - \partial g_{..}}_{L^{n}(C)},
\\
IV &\le \norm{g_{\eps}^{..}}_{L^{\infty}(C)} \norm{\partial g_{\eps ..} - \partial g_{..}}_{L^n} \norm{\partial g_{..}}_{L^n(C)},
\\
V &\le \norm{g_{\eps}^{..} - g^{..}}_{L^{\infty}(C)} \norm{\partial g_{..}}_{L^n(C)}^2.
\end{align*}
From these estimates, it follows that each term on the right-hand-side of~\eqref{12345} converges to $0$ as $\eps \to 0$. Therefore, the curvature of the metrics $\geps$ converges to that of $\g$ in $\Lp{n/2}$ as $\eps \to 0$. Since $\geps$ converge locally uniformly to $\g$ as $\eps \to 0$, it follows that $s_{\geps} \to s_{\g}$ in $\Lp{n/2}$ as $\eps \to 0$.
\end{proof}

\begin{remark}
Further to Remark~\ref{rem:SobolevBad}, we note that the full curvature tensor of the metrics $\geps$ converges to that of $\g$ in $\Lp{n/2}$. It therefore appears that Sobolev spaces may be a rather blunt tool with which to study scalar curvature bounds. It would be of interest to know whether more appropriate spaces of metrics, perhaps defined by isoperimetric conditions and/or volume doubling conditions, may be more analytically suited to the study of low-regularity metrics with non-negative scalar curvature (see, for instance, \cite{ACM}).
\end{remark}

Specifically regarding the negative part of the scalar curvature of the metrics $\g_{\eps}$, we have the following result.

\begin{proposition}
\label{thm:W2p}
Let $\g \in C^0(M) \cap \Wloc{2, n/2}$, and have non-negative scalar curvature in the distributional sense. Then the negative part of the scalar curvature of the metric $\geps$ satisfies
\bel{smin0}
\left\Vert \sepsmin \right\Vert_{L^{n/2}(M, \g)}
\le \left\Vert s_{\g_{\eps}} - s_{\g} \right\Vert_{L^{n/2}(M, \g)}
\to 0 \quad \mbox{as $\eps \to 0$}.
\ee
In particular, $\sepsmin \to 0$ in $L^{n/2}(M, \g)$ as $\eps \to 0$.
\end{proposition}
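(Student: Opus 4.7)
My plan is to reduce the claim to Proposition~\ref{W2pconvergence} via a pointwise bound on $\sepsmin$. The first step is to upgrade the distributional non-negativity hypothesis to pointwise almost-everywhere non-negativity of $s_{\g}$. Since $s_{\g} \in \Lp{n/2} \subset \Lp{1}$ by Proposition~\ref{sinLn/2}, and the pairing in~\eqref{sgdist} uses the positive Radon measure $d\mu_{\g}$ (as $\g$ is continuous and positive definite), assumption~\eqref{nonnegscalar} combined with the standard density of $\D(M)$ in $C_c(M)$ implies $s_{\g} \ge 0$ almost everywhere on $M$.

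With $s_{\g} \ge 0$ a.e.\ in hand, I would establish the pointwise inequality
\[
\sepsmin \le \left| s_{\g_{\eps}} - s_{\g} \right| \qquad \text{a.e.\ on $M$.}
\]
At a.e.\ point where $s_{\g_{\eps}} \ge 0$ the left-hand side vanishes, and at points where $s_{\g_{\eps}} < 0$ we have $\sepsmin = -s_{\g_{\eps}}$ and, using $s_{\g} \ge 0$,
\[
|s_{\g_{\eps}} - s_{\g}| = s_{\g} - s_{\g_{\eps}} \ge -s_{\g_{\eps}} = \sepsmin.
\]
Taking $L^{n/2}(M, \g)$ norms of both sides immediately yields the first inequality in~\eqref{smin0}.

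For the convergence statement, I would invoke Proposition~\ref{W2pconvergence}, which gives $s_{\geps} \to s_{\g}$ in $\Lp{n/2}$. Because $\geps$ agrees with $\g$ on $M \setminus K_{\eps}$, the difference $s_{\geps} - s_{\g}$ is supported in $K_{\eps}$, and Lemma~\ref{lemma:smooth}(\ref{3}) ensures that the $K_{\eps}$ lie in a fixed compact neighbourhood $K'$ of $K$ once $\eps$ is small. Local $L^{n/2}$ convergence on $K'$ therefore promotes to $\norm{s_{\geps} - s_{\g}}_{L^{n/2}(M)} \to 0$. Since $\g$ is continuous and coincides with $\e$ outside a compact set, the $L^{n/2}$ norms computed with respect to $\g$ and with respect to $\e$ are equivalent, so the convergence persists in $L^{n/2}(M, \g)$, and the display~\eqref{smin0} follows.

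There is no substantial obstacle here; the only delicate point is the initial passage from distributional to almost-everywhere non-negativity, which genuinely relies on $s_{\g}$ being a bona fide $L^{n/2}$ function (as guaranteed by Proposition~\ref{sinLn/2}) rather than a more singular distribution for which the sign would have no pointwise meaning.
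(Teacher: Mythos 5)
Your proof is correct, but it follows a genuinely different route from the paper's. You first upgrade the distributional inequality $\la s_{\g}, \varphi \ra \ge 0$ to the pointwise statement $s_{\g} \ge 0$ a.e.\ --- which is legitimate precisely because Proposition~\ref{sinLn/2} guarantees $s_{\g}$ is an honest $L^{n/2}_{\loc}$ function and $d\mu_{\g}$ is a positive Radon measure equivalent to Lebesgue measure in charts --- and then the bound $\sepsmin \le |s_{\g_{\eps}} - s_{\g}|$ is a trivial pointwise case check, after which you just integrate. The paper never makes this pointwise identification explicit: it stays entirely at the level of the pairing, testing $s_{\geps}$ against non-negative $\varphi$ supported in $\supp \sepsmin$ to obtain $\int_M \sepsmin \varphi \, d\mu_{\g} \le \norm{s_{\geps} - s_{\g}}_{L^{n/2}} \norm{\varphi}_{L^{n/(n-2)}}$, and then concludes via the extremal case of H\"older's inequality with the test object $\varphi = (\sepsmin)^{\frac{n}{2}-1}$ (extended by density from $\D(M)$ to compactly supported $L^{n/(n-2)}$ functions). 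Your argument is more elementary and, to my mind, more transparent; the paper's duality formulation is the shape of argument one would reach for if $s_{\g}$ were a more singular non-negative distribution (e.g.\ a measure), although as written both proofs ultimately require $s_{\g} \in L^{n/2}_{\loc}$ since the right-hand side of~\eqref{smin0} is an $L^{n/2}$ norm of $s_{\geps} - s_{\g}$. Your handling of the final limit --- Proposition~\ref{W2pconvergence} plus the observation that $s_{\geps} - s_{\g}$ is supported in the sets $K_{\eps}$, which lie in a fixed compact set for small $\eps$ --- matches the paper's.
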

\begin{proof}
First note that, since $\g_{\eps} = \g$ outside of the compact set $K_{\eps}$, the quantities $\sepsmin$ and $s_{\g_{\eps}} - s_{\g}$ have support contained in $K_{\eps}$. As such, the integrals appearing in~\eqref{smin0} are well-defined.

By assumption, given $\varphi \in \D(M)$ with $\varphi \ge 0$, we have
\begin{align*}
\int_M s_{\geps} \varphi \, d\mu_{\g}
&= \la s_{\g}, \varphi \ra + \int_M \left( s_{\geps} - s_{\g} \right) \varphi \, d\mu_{\g}
\\
&\ge \int_M \left( s_{\geps} - s_{\g} \right) \varphi \, d\mu_{\g}
\\
&\ge - \left\Vert s_{\geps} - s_{\g} \right\Vert_{L^{n/2}(K_{\eps}, \g)} \left\Vert \varphi \right\Vert_{L^{\frac{n}{n-2}}(K_{\eps}, \g)}.
\end{align*}
Let $A_{\geps}$ denote $\left\Vert s_{\geps} - s_{\g} \right\Vert_{L^{n/2}(K_{\eps}, \g)}$. We therefore have that, for all $\varphi \in \D(M)$ with $\varphi \ge 0$,
\[
\int_M \sepsmin \varphi \, d\mu_{\g}
\le \int_M (s_{\geps})_+ \varphi \, d\mu_{\g} + A_{\geps} \left\Vert \varphi \right\Vert_{L^{\frac{n}{n-2}}(M, \g)}.
\]
Without loss of generality, we assume that the set $\supp \sepsmin \subseteq K_{\eps}$ has non-zero measure (otherwise, there is nothing to prove). Given any non-negative test function $\varphi$ with $\supp \varphi \subseteq \sepsmin$, we therefore have
\bel{ssss}
\int_M \sepsmin \varphi \, d\mu
\le A_{\geps} \left\Vert \varphi \right\Vert_{L^{\frac{n}{n-2}}(M, \g)}.
\ee
The proof now follows the proof of the extremal version of {\H}'s inequality. Since $\sepsmin \in L^{n/2}_{\loc}(M, \g)$, this inequality extends by density and continuity to test objects $\varphi \in L^{\frac{n}{n-2}}(M, \g)$ such that $\varphi \ge 0$ with $\supp \varphi \subseteq \supp \sepsmin$. Taking the admissible test function $\varphi := \left( \sepsmin \right)^{\frac{n}{2} - 1}$ in~\eqref{ssss} yields $\left\Vert \sepsmin \right\Vert_{L^{n/2}(M, \g)} \le A_{\geps}$, as required.
\end{proof}

\begin{remark}
\label{rem:notLinfty}
If the metric $\g$ were only assumed to lie in $L_{\loc}^{\infty}(M) \cap W^{2, n/2}_{\loc}(M)$, then, generally, $\g_{\eps}$ would not converge to $\g$ in $L_{\loc}^{\infty}$, so the curvature of the $\g_{\eps}$ would not converge to that of $\g$ in $L_{\loc}^{n/2}(M)$. This is the reason that we must assume that the metric $\g$ is continuous.
\end{remark}

\begin{remark}
For the class of metrics discussed in~\cite{Miao, McS, Lee}, one obtains a pointwise bound on the negative part of $s_{\geps}$. For our metrics, it is an $L^{n/2}$ bound on $\sepsmin$ that appears naturally. In the Ricci flow approach adopted in~\cite{McS}, a pointwise lower bound on the scalar curvature of the smoothed metric is required in order to ensure that the solution of the flow with the metric $\g$ as initial data has non-negative scalar curvature. It would be of interest to know whether the Ricci flow approach may be adapted to metrics in our class, especially since this approach appears to be well-suited to proving rigidity theorems, which we are unable to establish with our techniques.
\end{remark}

\subsection{Sobolev constants}
Given a real number $k > 1$, we let $k^* := \frac{2k}{k-2}$. In particular, $n^* := \frac{2n}{n-2}$. We recall the following version of the Sobolev inequality from~\cite[Lemma~3.1]{SY}.

\begin{lemma}
\label{lemma:SY}
For each metric $\g_{\eps}$, there exists a constant $C > 0$ such that for any function $\varphi$ with compact support on $M$ we have
\bel{SYSob}
\norm{\varphi}_{L^{n^*}(M, \g_{\eps})}^2 \le C \, \norm{\nabla_{\g_{\eps}} \varphi}_{L^2(M, \g_{\eps})}^2.
\ee
The smallest such constant will be denoted $c_1[\g_{\eps}]$ and referred to as the \emph{Sobolev constant of $\geps$}.
\end{lemma}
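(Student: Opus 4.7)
The statement is essentially~\cite[Lemma~3.1]{SY}; my plan is to adapt that argument to the family $\{\geps\}$. The strategy is to combine the Euclidean Sobolev inequality at infinity with a local Sobolev inequality on the compact part of~$M$ via a partition of unity.

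First, I would use~\eqref{asymptotics} and the asymptotic chart $\Phi$ to fix $R>r_0$ so large that, on $U_R := \Phi(\{|\x|>R\})$, the pulled-back metric $\Phi^*\geps$ is bi-Lipschitz equivalent to the Euclidean metric $\delta$ with constants arbitrarily close to~$1$. Next, I would introduce a smooth partition of unity $\{1-\chi,\chi\}$ subordinate to the cover $\{U_R,\Omega\}$, where $\Omega$ is a fixed, relatively compact open set containing $M\setminus U_R$ (and hence containing $K_{\eps}$), with $\supp\chi\Subset\Omega$. For any $\varphi\in C_c^{\infty}(M)$ I would then decompose $\varphi = (1-\chi)\varphi + \chi\varphi$ and estimate each piece separately.

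The piece $(1-\chi)\varphi$ can be transported via $\Phi^{-1}$ to a compactly supported function on $\R^n$; the classical Euclidean Sobolev inequality, combined with the bi-Lipschitz equivalence on $U_R$, then controls $\norm{(1-\chi)\varphi}_{L^{n^*}(M,\geps)}$ by $\norm{\nabla_{\geps}((1-\chi)\varphi)}_{L^2(M,\geps)}$, with the distortion factors absorbed into the constant. The piece $\chi\varphi$ is compactly supported inside the precompact open set $\Omega$, on which $\geps$ is smooth; a standard local Sobolev embedding (pulled back from Euclidean charts covering $\overline{\Omega}$) would give an estimate of $\norm{\chi\varphi}_{L^{n^*}(M,\geps)}$ in terms of $\norm{\nabla_{\geps}(\chi\varphi)}_{L^2(\Omega,\geps)}$ plus a lower-order $L^2$ term.

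The main obstacle is absorbing this $L^2$ term, since one cannot use a Hölder bound of the form $\norm{\chi\varphi}_{L^2}\le|\Omega|^{1/n}\norm{\chi\varphi}_{L^{n^*}}$ (the coefficient is not small). I would handle it with a Poincaré inequality: since $\chi\varphi$ vanishes identically on the nonempty open set $\Omega\setminus\supp\chi$ of positive measure, Poincaré's inequality on the bounded domain $\Omega$ yields $\norm{\chi\varphi}_{L^2(\Omega,\geps)}\le C\,\norm{\nabla_{\geps}(\chi\varphi)}_{L^2(\Omega,\geps)}$. Summing the two estimates and applying the Leibniz rule together with a uniform bound on $|\nabla_{\geps}\chi|_{\geps}$ (which holds since $\chi$ is a fixed smooth function and the metrics $\geps$ are uniformly comparable by Lemma~\ref{lemma:rho}) would then yield~\eqref{SYSob}, and the smallest admissible constant defines $c_1[\geps]$.
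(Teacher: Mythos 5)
The paper does not prove this lemma itself: it is quoted from~\cite[Lemma~3.1]{SY}, with only the remark that an inspection of Schoen and Yau's argument shows it survives for continuous metrics. Your reconstruction therefore has to stand on its own. The overall strategy --- Euclidean Sobolev inequality transported to the end, local Sobolev inequality on a precompact piece, glued by a partition of unity --- is a reasonable route, and your treatment of each piece separately is fine; for $\chi\varphi$ you can in fact invoke the ordinary Poincar\'e inequality for $H^1_0(\Omega)$ directly, since $\supp(\chi\varphi) \Subset \Omega$. Note also that the lemma only asks for a constant for each fixed $\geps$ (uniformity in $\eps$ is handled separately via~\eqref{Sobequiv}), so you need not worry about $\eps$-dependence here.

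The gap is in the final gluing step. After the Leibniz rule, \emph{both} of your partial estimates produce the same cross term, bounded by $\sup\norme{\nabla\chi}\cdot\norm{\varphi}_{L^2(A)}$ with $A := \supp\nabla\chi$ a fixed compact annular region in the transition zone. This term involves $\varphi$ itself, not $\chi\varphi$, so your Poincar\'e argument does not apply to it ($\varphi$ need not vanish on any subset of $\Omega$), and H\"older gives only $\norm{\varphi}_{L^2(A)} \le \mu(A)^{1/n}\norm{\varphi}_{L^{n^*}(M)}$ with a coefficient that is not small --- exactly the absorption problem you correctly identified for the other lower-order term. Feeding your two partial estimates back into $\norm{\varphi}_{L^2(A)}$ to control it is circular. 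A genuinely new input is needed: for instance, since $A$ lies in the end and $\varphi\circ\Phi$ is compactly supported on $\R^n\setminus\overline{B(0,r_0)}$, writing $\varphi(\Phi(\x)) = -\int_{|\x|}^{\infty}\partial_t\bigl[\varphi(\Phi(t\,\x/|\x|))\bigr]\,dt$ and applying Cauchy--Schwarz (a Hardy-type inequality, valid for $n\ge 3$) gives $\norm{\varphi}_{L^2(A)} \le C(A)\,\norm{\nabla\varphi}_{L^2(M,\geps)}$ directly; alternatively one can run a compactness--contradiction argument, normalising $\norm{\varphi_i}_{L^{n^*}} = 1$ and $\norm{\nabla\varphi_i}_{L^2}\to 0$ and using Rellich on the compact part together with the Euclidean inequality on the end. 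With either of these additions your argument closes; without one of them, the proof as written does not.
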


An inspection of the proof of this Lemma in~\cite{SY} shows that this result remains valid for continuous metrics. We therefore have that, for any compactly supported function on $M$,
\[
\norm{\varphi}_{L^{n^*}(M, \g)}^2 \le c_1[\g] \, \norm{\nabla_{\g} \varphi}_{L^2(M, \g)}^2.
\]
where $c_1[\g] > 0$ denotes the Sobolev constant of the metric $\g$.

\begin{proposition}
The Sobolev constants of the metrics $\g_{\eps}$ are related to that of the metric $\g$ by the inequality
\bel{Sobequiv}
\frac{1}{\rho(\eps)^n} \, c_1[\g_{\eps}] \le c_1[\g] \le \rho(\eps)^n \, c_1[\g_{\eps}],
\ee
where $\rho$ is the function introduced in Lemma~\ref{lemma:rho}.
\end{proposition}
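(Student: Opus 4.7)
The plan is to translate the pointwise bilinear-form inequality \eqref{equivalence} into scalar inequalities between the ingredients of the Sobolev inequality (the $L^{n^*}$-norm, the $L^2$-norm of the gradient, and the volume element), and then substitute into the Sobolev inequality for $\g_{\eps}$ to deduce one for $\g$. The main arithmetic check is that the three individually-acquired powers of $\rho(\eps)$ combine to precisely $\rho(\eps)^n$; I will track this carefully.

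First, I would note that \eqref{equivalence} has three immediate consequences. Writing $\rho$ for $\rho(\eps)$, monotonicity of the determinant on positive-definite symmetric bilinear forms gives
\[
\rho^{-n/2} \, d\mu_{\g_{\eps}} \le d\mu_{\g} \le \rho^{n/2} \, d\mu_{\g_{\eps}}.
\]
Taking inverses in \eqref{equivalence} reverses the inequalities for the dual metrics, so $\rho^{-1} \g_{\eps}^{-1} \le \g^{-1} \le \rho \, \g_{\eps}^{-1}$. Applied to the symmetric bilinear form $d\varphi \otimes d\varphi$, this yields the pointwise bound $\rho^{-1} |\nabla_{\g_{\eps}} \varphi|_{\g_{\eps}}^2 \le |\nabla_{\g} \varphi|_{\g}^2 \le \rho |\nabla_{\g_{\eps}} \varphi|_{\g_{\eps}}^2$. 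Combining these with the volume form comparison (and raising the $L^{n^*}$ identity to the power $2/n^* = (n-2)/n$), I obtain, for any compactly supported $\varphi$,
\[
\rho^{-(n-2)/2} \norm{\varphi}_{L^{n^*}(M,\g)}^2 \le \norm{\varphi}_{L^{n^*}(M,\g_{\eps})}^2 \le \rho^{(n-2)/2} \norm{\varphi}_{L^{n^*}(M,\g)}^2,
\]
\[
\rho^{-(n+2)/2} \norm{\nabla_{\g}\varphi}_{L^2(M,\g)}^2 \le \norm{\nabla_{\g_{\eps}}\varphi}_{L^2(M,\g_{\eps})}^2 \le \rho^{(n+2)/2} \norm{\nabla_{\g}\varphi}_{L^2(M,\g)}^2.
\]

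Now I substitute into \eqref{SYSob} for $\g_{\eps}$: applying the lower bound on the left-hand side and the upper bound on the right-hand side of that Sobolev inequality yields
\[
\rho^{-(n-2)/2} \norm{\varphi}_{L^{n^*}(M,\g)}^2 \le c_1[\g_{\eps}] \, \rho^{(n+2)/2} \norm{\nabla_{\g}\varphi}_{L^2(M,\g)}^2,
\]
and therefore $\norm{\varphi}_{L^{n^*}(M,\g)}^2 \le \rho^n c_1[\g_{\eps}] \, \norm{\nabla_{\g}\varphi}_{L^2(M,\g)}^2$ for every compactly supported $\varphi$. By definition of $c_1[\g]$ as the infimum of admissible Sobolev constants for $\g$, this shows $c_1[\g] \le \rho(\eps)^n c_1[\g_{\eps}]$, which is the right-hand inequality in \eqref{Sobequiv}.

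The opposite inequality follows by exactly the same argument with the roles of $\g$ and $\g_{\eps}$ interchanged: \eqref{equivalence} is symmetric in $\g$ and $\g_{\eps}$ under the substitution $\rho \mapsto \rho$, so applying the lower bounds on $L^{n^*}$-norms of $\g_{\eps}$ and the upper bounds on $L^2$-norms of gradients with respect to $\g_{\eps}$, substituted into Lemma~\ref{lemma:SY} now applied with the metric $\g$ (which is valid since the proof in~\cite{SY} goes through for continuous metrics, as noted after the statement of Lemma~\ref{lemma:SY}), gives $c_1[\g_{\eps}] \le \rho(\eps)^n c_1[\g]$, i.e.\ the left-hand inequality in \eqref{Sobequiv}. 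No step presents a serious obstacle; the only place one must be careful is the exponent bookkeeping, where the powers $(n-2)/2$ from the $L^{n^*}$-comparison and $(n+2)/2$ from the gradient comparison combine to give exactly $n$, in agreement with the stated bounds.
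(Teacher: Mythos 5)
Your proposal is correct and follows essentially the same route as the paper's proof: converting the bilinear-form equivalence \eqref{equivalence} into comparisons of the volume elements, the $L^{n^*}$-norms and the gradient $L^2$-norms, substituting into the Sobolev inequality for $\g_{\eps}$, and then reversing the roles of the two metrics. The exponent bookkeeping ($\rho^{(n-2)/2}$ from the $L^{n^*}$-comparison times $\rho^{(n+2)/2}$ from the gradient comparison giving $\rho^{n}$) matches the paper exactly.
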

\begin{proof}
Let $\varphi$ be a function with compact support on $M$. We then have
\begin{align*}
\norm{\varphi}_{L^{n^*}(M, \g)}^2 &= \left( \int_M |\varphi|^{n^*} \, d\mu_{\g} \right)^{2/n^*}
\\
&\le \left( \rho(\eps)^{n/2} \int_M |\varphi|^{n^*} \, d\mu_{\g_{\eps}} \right)^{2/n^*}
\\
&\le \rho(\eps)^{\frac{n-2}{2}} c_1[\g_{\eps}] \left( \int_M |\nabla\varphi|_{\g_{\eps}}^2 \, d\mu_{\g_{\eps}} \right)
\\
&\le \rho(\eps)^{\frac{n-2}{2}} c_1[\g_{\eps}] \left( \rho(\eps)^{n/2+1} \int_M |\nabla\varphi|_{\g}^2 \, d\mu_{\g} \right)
\\
&= \rho(\eps)^n c_1[\g_{\eps}] \, \norm{\nabla\varphi}_{L^2(M, \g)}^2.
\end{align*}
Therefore $c_1[\g] \le \rho(\eps)^n c_1[\g_{\eps}]$. Reversing the same argument gives the other part of~\eqref{Sobequiv}.
\end{proof}

\section{Conformal transformations and elliptic estimates}
\label{sec:elliptic}

The approximating metrics $\g_{\eps}$ constructed in the previous section do not necessarily have non-negative scalar curvature. However, as shown in Proposition~\ref{thm:W2p}, we have control over the $L^{n/2}$ norm of the negative part of $s_{\g_{\eps}}$. We will follow an approach of Miao~\cite{Miao} in performing a conformal transformation of the metrics $\g_{\eps}$ to obtain asymptotically flat metrics $\ghat_{\eps}$ with non-negative scalar curvature. Since the results of this section are of some independent interest, we develop our estimates for an arbitrary metric, applying them to the case of the $\g_{\eps}$ in Section~\ref{epsilonsbackin}. Therefore, let $\h$ be an arbitrary smooth asymptotically flat, Riemannian metric on the manifold $M$.

\smallskip

We will require the following result of Schoen and Yau.

\begin{lemma}{\cite[Lemma~3.2]{SY}}
\label{lemma:SY2}
There exists a constant $\eps_0 = \eps_0(\h) > 0$ with the property that if
\[
\norm{f_-}_{L^{n/2}(M, \h)} \le \eps_0(\h),
\]
then the partial differential equation
\[
\Delta_{\h} u - f u = F
\]
has a unique solution on $M$ that satisfies%
\footnote{Here, and below, $\x$ refer to the asymptotic coordinate system in the definition of the end $N$.}
$u(\x) = \frac{A}{|\x|^{n-2}} + \omega(|\x|)$ as $|\x| \to \infty$, where $A$ is constant and $\omega(|\x|) = O(|\x|^{1-n})$ as $|\x| \to \infty$.
\end{lemma}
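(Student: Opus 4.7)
The plan is to establish existence and uniqueness via a variational/Lax--Milgram argument on a weighted Sobolev space adapted to the asymptotic geometry, and then to extract the asymptotic expansion by comparing the solution to a Newtonian potential on the end. Throughout I would work on the Hilbert space $\dot{W}^{1,2}(M, \h)$ obtained as the completion of $\D(M)$ under the Dirichlet norm $\norm{\nabla u}_{L^2(M, \h)}$. By the Sobolev inequality of Lemma~\ref{lemma:SY}, this space embeds continuously into $L^{n^*}(M, \h)$ and, together with the decay properties of $\h$, contains functions which tend to zero at infinity in an integral sense.

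First I would consider the bilinear form $B(u, v) := \int_M \bigl( \ipe{\nabla u}{\nabla v}_{\h} + f \, u v \bigr) \, d\mu_{\h}$ on $\dot{W}^{1,2}(M, \h)$. Splitting $f = f_+ - f_-$ and applying \H's inequality with exponents $n/2$ and $n/(n-2)$ together with Lemma~\ref{lemma:SY}, one obtains
\[
\int_M f_- u^2 \, d\mu_{\h} \le \norm{f_-}_{L^{n/2}(M, \h)} \, \norm{u}_{L^{n^*}(M, \h)}^2 \le c_1[\h] \, \norm{f_-}_{L^{n/2}(M, \h)} \, \norm{\nabla u}_{L^2(M, \h)}^2.
\]
Choosing $\eps_0(\h)$ strictly smaller than $1/c_1[\h]$ yields
\[
B(u, u) \ge \bigl( 1 - c_1[\h] \eps_0(\h) \bigr) \, \norm{\nabla u}_{L^2(M, \h)}^2 + \int_M f_+ u^2 \, d\mu_{\h},
\]
so $B$ is coercive and bounded on $\dot{W}^{1,2}(M, \h)$. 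Assuming the right-hand side $F$ is such that $v \mapsto \int_M F v \, d\mu_{\h}$ defines a bounded linear functional on this space (which is the natural hypothesis on $F$ and is where the compactly supported or sufficiently decaying nature of $F$ enters), Lax--Milgram furnishes a unique weak solution $u \in \dot{W}^{1,2}(M, \h)$.

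For the asymptotic expansion, I would pass to the asymptotically Euclidean coordinates on $N = M \setminus K$ and rewrite $\Delta_{\h} u - f u = F$ as $\Delta_0 u = \Psi$, where $\Delta_0$ is the flat Laplacian and $\Psi$ collects $F$, $fu$, and the commutator between $\Delta_{\h}$ and $\Delta_0$ coming from~\eqref{asymptotics}. The decay of $\h - \delta$ at the rate $|\x|^{-q}$ with $q > \frac{n-2}{2}$, together with the integrability of $u$ provided by the Sobolev embedding into $L^{n^*}$, gives that $\Psi \in L^1(N)$ with sufficient decay. Representing $u$ on the end via the Newtonian potential $u(\x) = -\int_{\R^n} G_0(\x, \y) \Psi(\y) \, d\y$ (modulo a harmonic correction that is absorbed into the constant term by the boundary condition $u \to 0$), and expanding $G_0(\x, \y) \sim c_n / |\x|^{n-2}$ for $|\x| \gg |\y|$, one reads off $A = -c_n \int \Psi$ and controls the remainder $\omega$ by $O(|\x|^{1-n})$ via the next-order Taylor expansion of the Green's function. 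Standard elliptic regularity bootstraps $u$ to whatever smoothness the data allow.

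The main obstacle I anticipate is not the Lax--Milgram step, which is essentially automatic once the smallness of $\norm{f_-}_{L^{n/2}}$ is exploited, but rather the asymptotic analysis: one must verify that $\Psi$ decays fast enough (using both the assumed decay of $F, f$ outside a compact set and the integral decay of $u$ inherited from $\dot{W}^{1,2}$) to make the Newtonian potential representation legitimate and to yield the $O(|\x|^{1-n})$ remainder. This is a careful weighted-elliptic bootstrap argument of the type carried out in~\cite{Bartnik}, and is where the precise value $q > \frac{n-2}{2}$ in~\eqref{asymptotics} is used.
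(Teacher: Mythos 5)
Your argument is correct in substance, but it takes a different route from the one the paper (following Schoen--Yau) actually uses. The paper does not work globally on $M$ at the existence stage: it solves the Dirichlet problem $\Delta_{\h} v - f v = F$, $v|_{\partial\Omega} = 0$ on open sets $\Omega$ with compact closure, shows via the same Sobolev/energy estimate you use for coercivity that the condition $c_1[\h]\,\norm{f_-}_{L^{n/2}(M,\h)} < 1$ forces the homogeneous problem to have only the trivial solution, invokes the Fredholm alternative for existence and uniqueness on each $\Omega$, and then passes to a compact exhaustion of $M$ and extracts a convergent subsequence; the asymptotic expansion is quoted from \cite{SY} rather than re-derived. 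Your single global Lax--Milgram argument on the completion of $\D(M)$ under the Dirichlet norm is cleaner for existence and hinges on exactly the same inequality $\int f_- u^2 \le c_1[\h]\,\norm{f_-}_{L^{n/2}}\norm{\nabla u}_{L^2}^2$, so the threshold $\eps_0(\h) < 1/c_1[\h]$ you obtain matches the $\Omega$-independent condition \eqref{fcondition} that the paper is at pains to make explicit. What the exhaustion formulation buys, and the reason the paper keeps it, is that the quantitative bounds of Proposition~\ref{prop:bounds} are naturally proved for the Dirichlet problems on the $\Omega$'s and then passed to the limit. Two small caveats on your version: boundedness of $B$ on $\dot{W}^{1,2}$ requires $f_+ \in L^{n/2}(M)$ as well (harmless here, since in the application $f = -\tfrac{1}{a_n}\shmin \le 0$ with compact support), and the derivation of the $O(|\x|^{1-n})$ remainder needs the first moment of your effective source $\Psi$ to be finite, which is exactly the weighted bootstrap on the end that you correctly flag as the delicate step.
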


We recall, and make more explicit, some parts of the proof of Lemma~\ref{lemma:SY2} from~\cite{SY}, since we will later require rather precise information concerning the constant $\eps_0(\h)$. Let $\Omega$ be an open subset of $M$ with compact closure, such that $\partial\Omega$ is smooth and contained in the ends of $M$. The proof of Lemma~\ref{lemma:SY2} involves solving the Dirichlet problem
\bel{SYproof}
\Delta_{\h} v - f v = F, \qquad \left. v \right|\partial\Omega = 0
\ee
on the set $\Omega$. Treating the homogeneous problem with $F = 0$ and using the Sobolev inequality~\eqref{SYSob} for the metric $\h$ yields the inequality
\begin{align*}
\int_{\Omega} |\nabla v|_{\h}^2 \, d\mu_{\h}
&\le c_1[\h] \, \norm{f_-}_{L^{n/2}(\Omega, \h)} \, \int_{\Omega} |\nabla v|_{\h}^2\, d\mu_{\h}
\\
&\le c_1[\h] \, \norm{f_-}_{L^{n/2}(M, \h)} \, \int_{\Omega} |\nabla v|_{\h}^2\, d\mu_{\h}.
\end{align*}
It follows that if the ($\Omega$ independent) condition
\bel{fcondition}
c_1[\h] \, \norm{f_-}_{L^{n/2}(M, \h)} < 1,
\ee
is satisfied, then the homogeneous problem~\eqref{SYproof} has a unique smooth solution $v \equiv 0$ on the region $\Omega$. Fredholm theory then yields the (unique) existence of a solution of the inhomogeneous problem~\eqref{SYproof}.

Let $u$ denote a solution of the equation
\[
\Delta_{\h} u + \frac{1}{a_n} \shmin u = 0,
\]
where
\[
a_n := 4 \, \frac{n-1}{n-2}.
\]
The conformal transformation formula for the scalar curvature (see, for instance, \cite{LeeParker}) implies that the conformally rescaled metric
\[
\hhat := u^{4/(n-2)} \h
\]
has scalar curvature satisfying
\begin{align*}
\frac{1}{a_n} s_{\hhat} u^{\frac{n+2}{n-2}}
= - \Delta_{\h} u + \frac{1}{a_n} s_{\h} u
= - \left( \Delta_{\h} u + \frac{1}{a_n} \shmin u \right) + \frac{1}{a_n} (s_{\h})_+ u.
\end{align*}
As such, if we impose that the conformal factor $u$ satisfies the equation $\Delta_{\h} u + \frac{1}{a_n} \shmin u = 0$ in $\Omega$ with $u = 1$ on $\partial\Omega$, then the metric $\hhat$ will have non-negative scalar curvature on $\Omega$. Letting $u := 1 + v$, we require that $v$ satisfies the equation
\bel{veqn}
\Delta_{\h} v + \frac{1}{a_n} \shmin \, v + \frac{1}{a_n} \shmin = 0 \mbox{ in $\Omega$}, \qquad \left. v \right| \partial\Omega = 0.
\ee

\begin{proposition}
\label{prop:bounds}
Let $v$ satisfy~\eqref{veqn}. We define the quantity
\[
\alpha := \frac{1}{a_n} c_1[\h] \, \left\Vert \shmin \right\Vert_{L^{n/2}(M, \h)}.
\]
We assume that the condition $\alpha < 1$ is satisfied, and that the function $\shmin$ has compact support. Then the solution $v$ has the following properties
\begin{subequations}
\begin{align}
\norm{v}_{L^{n^*}(\Omega, \h)}
&\le \frac{\alpha}{1 - \alpha} \, \mu_{\h} \left( \Omega \cap \supp \shmin \right)^{1/n^*},
\label{vbound1b}
\\
\norm{\nabla v}_{L^2(\Omega, \h)}^2
&\le \frac{1}{a_n} \, \frac{\alpha}{\left( 1 - \alpha \right)^2} \,
\left\Vert \shmin \right\Vert_{L^{n/2}(M, \h)} \, \mu_{\h} \left( \Omega \cap \supp \shmin \right)^{2/n^*},
\label{dwbound1b}
\end{align}\end{subequations}
where, for any measurable set $E$, $\mu_{\h}(E) := \int_E d\mu_{\h}$ denotes the volume of $E$ with respect to the Riemannian volume element.
\end{proposition}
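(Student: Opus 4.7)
The plan is a standard energy argument: test the PDE~\eqref{veqn} against $v$ itself, integrate by parts, and absorb the $\shmin v^2$ term using the Sobolev inequality combined with the smallness condition encoded in $\alpha < 1$.

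First I would multiply~\eqref{veqn} by $v$ and integrate over $\Omega$ with respect to $d\mu_{\h}$. Since $v$ vanishes on $\partial\Omega$, integration by parts gives
\[
\int_{\Omega} |\nabla v|_{\h}^{2} \, d\mu_{\h} = \frac{1}{a_{n}} \int_{\Omega} \shmin \, v^{2} \, d\mu_{\h} + \frac{1}{a_{n}} \int_{\Omega} \shmin \, v \, d\mu_{\h}.
\]
To handle the first term on the right, I apply H\"older with exponents $n/2$ and $n/(n-2)$ and use $\norm{v^{2}}_{L^{n/(n-2)}} = \norm{v}_{L^{n^{*}}}^{2}$, then invoke the Sobolev inequality~\eqref{SYSob} to bound $\norm{v}_{L^{n^{*}}}^{2} \le c_{1}[\h] \norm{\nabla v}_{L^{2}}^{2}$; this produces the coefficient $\alpha$ and yields the term $\alpha \norm{\nabla v}_{L^{2}}^{2}$ on the right.

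For the linear term $\int_\Omega \shmin v \, d\mu_\h$, the key observation is that the integrand is supported in $\Omega\cap\supp\shmin$, so I write $\shmin v = \shmin\, v\, \mathbf{1}_{\Omega\cap\supp\shmin}$ and apply a triple H\"older inequality with exponents $n/2$, $n^{*}$, and $n^{*}$ (noting $\tfrac{2}{n}+\tfrac{1}{n^{*}}+\tfrac{1}{n^{*}} = 1$), giving
\[
\int_{\Omega} \shmin v \, d\mu_{\h} \le \norm{\shmin}_{L^{n/2}(M,\h)} \, \norm{v}_{L^{n^{*}}(\Omega,\h)} \, \mu_{\h}\!\left( \Omega \cap \supp \shmin \right)^{1/n^{*}}.
\]
Applying Sobolev once more to $\norm{v}_{L^{n^{*}}}$ gives a term linear in $\norm{\nabla v}_{L^{2}}$.

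Assembling the two estimates yields
\[
\norm{\nabla v}_{L^{2}}^{2} \le \alpha \, \norm{\nabla v}_{L^{2}}^{2} + \frac{1}{a_{n}} c_{1}[\h]^{1/2} \norm{\shmin}_{L^{n/2}(M,\h)} \, \mu_{\h}\!\left( \Omega \cap \supp \shmin \right)^{1/n^{*}} \, \norm{\nabla v}_{L^{2}}.
\]
Using the hypothesis $\alpha < 1$, I absorb the first term on the left, divide by $\norm{\nabla v}_{L^{2}}$ (the case where this vanishes is trivial), square, and rewrite $c_{1}[\h] \norm{\shmin}_{L^{n/2}(M,\h)} = a_{n}\alpha$ to obtain the claimed estimate~\eqref{dwbound1b}. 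The bound~\eqref{vbound1b} then follows immediately from $\norm{v}_{L^{n^{*}}} \le c_{1}[\h]^{1/2} \norm{\nabla v}_{L^{2}}$ and another substitution of $c_{1}[\h] \norm{\shmin}_{L^{n/2}(M,\h)} = a_{n}\alpha$.

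The main point requiring care is the H\"older split for the linear term: one must keep the indicator $\mathbf{1}_{\supp\shmin}$ explicit in order to produce the volume factor $\mu_{\h}(\Omega \cap \supp \shmin)^{1/n^{*}}$ with the correct exponent, since a two-factor H\"older against $\norm{v}_{L^{n/(n-2)}}$ would not allow a direct application of the Sobolev inequality. Once this split is identified, the rest is routine absorption; the compact support assumption on $\shmin$ is essential precisely to ensure this volume factor is finite.
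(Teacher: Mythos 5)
Your argument is correct and is essentially the paper's own proof: the same energy identity from testing against $v$, the same three-factor H\"older split $\shmin\, v = \shmin \cdot 1 \cdot v$ over $\Omega \cap \supp \shmin$ with exponents $n/2$, $n^*$, $n^*$, the same use of the Sobolev inequality, and the same absorption via $\alpha < 1$; the constants come out identically. The only (immaterial) difference is the order of deduction — the paper first solves for $\norm{v}_{L^{n^*}}$ and back-substitutes to get the gradient bound, whereas you derive the gradient bound first and then recover \eqref{vbound1b} from Sobolev.
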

\begin{proof}
Our proof is a modification of that of Proposition~4.1 in~\cite{Miao}. Let $f$ denote the function $\frac{1}{a_n} \shmin$. Multiplying the equation $\Delta_{\h} v + f v + f = 0$ by $v$, integrating over $\Omega$, and using {\H}'s inequality, we deduce that
\begin{align}
\norm{\nabla v}_{L^2(\Omega, \h)}^2 &= \int_{\Omega} {\left( f v^2 + f v \right) \, d\mu_{\h}}
\nonumber
\\
&\le \norm{f}_{L^{n/2}(\Omega, \h)} \norm{v}_{L^{n^*}(\Omega, \h)}^2 + \norm{f}_{L^{n/2}(\Omega, \h)} \norm{1}_{L^{n^*}(\Omega, \h)}\ \norm{v}_{L^{n^*}(\Omega, \h)}\
\nonumber
\\
&\le \norm{f}_{L^{n/2}(M, \h)} \norm{v}_{L^{n^*}(\Omega, \h)}^2 + \norm{f}_{L^{n/2}(M, \h)} \mu_{\h}(\Omega \cap \supp f)^{1/n^*} \norm{v}_{L^{n^*}(\Omega, \h)}.
\label{M1}
\end{align}
It follows from the Sobolev inequality that we have
\begin{align*}
\norm{v}_{L^{n^*}(\Omega, \h)}^2 &\le c_1[\h] \norm{f}_{L^{n/2}(M, \h)} \norm{v}_{L^{n^*}(\Omega, \h)}^2
+ c_1[\h] \norm{f}_{L^{n/2}(M, \h)} \mu_{\h}(\Omega \cap \supp f)^{1/n^*} \norm{v}_{L^{n^*}(\Omega, \h)}
\\
&\equiv \alpha \norm{v}_{L^{n^*}(\Omega, \h)}^2 + \alpha \mu_{\h}(\Omega \cap \supp f)^{1/n^*} \norm{v}_{L^{n^*}(\Omega, \h)}
\end{align*}
Therefore, since $\alpha < 1$, we deduce that
\[
\norm{v}_{L^{n^*}(\Omega, \h)} \le \frac{\alpha}{1 - \alpha} \mu_{\h}(\Omega \cap \supp f)^{1/n^*},
\]
as required. The inequality~\eqref{dwbound1b} now follows from~\eqref{M1} and~\eqref{vbound1b}.
\end{proof}

\begin{remark}
The proof of Proposition~\ref{prop:bounds} may be tightened to give the slightly stronger result that if
\bel{badbd}
c_1[\h] \norm{\shmin}_{L^{n/2}(\Omega, \h)} < a_n,
\ee
then we have the inequalities
\begin{subequations}
\begin{align}
\norm{v}_{L^{n^*}(\Omega, \h)}
&\le \frac{c_1[\h] \, \norm{\shmin}_{L^{n/2}(\Omega, \h)}}{a_n - c_1[\h] \, \norm{\shmin}_{L^{n/2}(\Omega, \h)}} \mu_{\h}(\Omega \cap \supp f)^{1/n^*},
\label{vbound1}
\\
\norm{\nabla v}_{L^2(\Omega, \h)}^2
&\le
\frac{c_1[\h] \norm{\shmin}_{L^{n/2}(\Omega, \h)}^2}{\left( a_n - c_1[\h] \norm{ \shmin}_{L^{n/2}(\Omega, \h)} \right)^2} \mu_{\h}(\Omega \cap \supp f)^{2/n^*},
\label{dwbound1}
\end{align}\end{subequations}
These estimates are less useful since the right-hand-sides, and the condition~\eqref{badbd} on $\shmin$, depend on the set $\Omega$.
\end{remark}

Taking a sequence of sets $\Omega$ with $\overline{\Omega}$ a compact exhaustion of $M$, then the condition~\eqref{fcondition} (being independent of $\Omega$) implies existence of solutions of the corresponding Dirichlet problems. We then extract a subsequence of solutions that converges to the solution on $M$ referred to in Lemma~\ref{lemma:SY2}. Combining the Lemma~\ref{lemma:SY2} with the results of Proposition~\ref{prop:bounds}, we have the following result.
\begin{theorem}
\label{thm:SUSY}
If $c_1[\h] \norm{\shmin}_{L^{n/2}(M, \h)} < a_n$, then there exists a unique smooth solution to the partial differential equation
\[
\Delta_{\h} v + \frac{1}{a_n} \shmin v + \frac{1}{a_n} \shmin = 0
\]
with the property that $v(\x) = \frac{A}{|\x|^{n-2}} + O(|\x|^{1-n})$ as $|\x| \to \infty$, where $A$ is constant. This solution has the properties that
\begin{subequations}
\begin{align}
\norm{v}_{L^{n^*}(M, \h)}
&\le \frac{c_1[\h] \, \norm{\shmin}_{L^{n/2}(M, \h)}}{a_n - c_1[\h] \, \norm{\shmin}_{L^{n/2}(M, \h)}} \, \mu_{\h}(\supp \shmin )^{1/n^*},
\label{vbound1c}
\\
\norm{\nabla v}_{L^2(M, \h)}^2
&\le \frac{ c_1[\h] \norm{\shmin}_{L^{n/2}(M, \h)}^2}{\left( a_n - c_1[\h] \norm{\shmin}_{L^{n/2}(M, \h)} \right)^2} \, \mu_{\h}(\supp \shmin)^{2/n^*},
\label{dwbound1c}
\end{align}\end{subequations}
\end{theorem}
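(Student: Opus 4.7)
The plan is to combine Lemma~\ref{lemma:SY2} with the $\Omega$-dependent bounds in the Remark after Proposition~\ref{prop:bounds}, via a compact exhaustion of $M$. Specifically, set $f := -\frac{1}{a_n}\shmin$ and $F := \frac{1}{a_n}\shmin$; then $f_- = \frac{1}{a_n}\shmin$, and our hypothesis $c_1[\h]\,\norm{\shmin}_{L^{n/2}(M,\h)} < a_n$ is precisely the Sobolev condition~\eqref{fcondition} required by the proof of Lemma~\ref{lemma:SY2}. Existence, uniqueness and the asymptotic expansion $v(\x) = A|\x|^{2-n} + O(|\x|^{1-n})$ of a solution to $\Delta_{\h} v + \frac{1}{a_n}\shmin v + \frac{1}{a_n}\shmin = 0$ then follow directly from that lemma. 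Smoothness away from the closure of $\supp\shmin$ is immediate by elliptic regularity since the coefficient and source vanish there, while in the interior one has at least the regularity dictated by that of $\shmin$, which suffices for our purposes.

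Next I would derive the quantitative bounds~\eqref{vbound1c} and~\eqref{dwbound1c}. Pick a smooth compact exhaustion $\Omega_1 \subset \Omega_2 \subset \cdots$ of $M$ whose boundaries lie in the asymptotically flat end, chosen large enough that $\supp\shmin \subset \Omega_1$. On each $\Omega_k$, the argument recalled in the excerpt (the Fredholm alternative applied after using the Sobolev inequality to eliminate the homogeneous solution under~\eqref{fcondition}) produces a unique smooth $v_k$ solving the Dirichlet problem~\eqref{veqn} on $\Omega_k$, and the Remark after Proposition~\ref{prop:bounds} yields
\[
\norm{v_k}_{L^{n^*}(\Omega_k,\h)} \le \frac{c_1[\h]\,\norm{\shmin}_{L^{n/2}(\Omega_k,\h)}}{a_n - c_1[\h]\,\norm{\shmin}_{L^{n/2}(\Omega_k,\h)}} \, \mu_{\h}(\supp\shmin)^{1/n^*},
\]
together with the analogous bound on $\norm{\nabla v_k}_{L^2(\Omega_k,\h)}$. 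Since $\supp\shmin \subset \Omega_1 \subset \Omega_k$, the right-hand sides are monotone in $k$ and, crucially, the denominators are bounded below by the $k$-independent positive quantity $a_n - c_1[\h]\,\norm{\shmin}_{L^{n/2}(M,\h)}$.

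The final step is to identify $\lim_k v_k$ with the global solution $v$ from Lemma~\ref{lemma:SY2} and to pass the above bounds to the limit. The uniform $L^{n^*}(M,\h)$ and $\dot W^{1,2}(M,\h)$ bounds on $v_k$ (extended by zero outside $\Omega_k$) allow extraction of a subsequence converging weakly in both spaces to some $v_\infty \in \dot W^{1,2}(M,\h) \cap L^{n^*}(M,\h)$ satisfying the PDE on $M$ (the term $\shmin v_k$ passes to the limit because $\shmin$ has compact support and $v_k \to v_\infty$ strongly in $L^2$ on that support by Rellich--Kondrachov). By the uniqueness part of Lemma~\ref{lemma:SY2}, $v_\infty = v$, and by weak lower semicontinuity of the norms we obtain~\eqref{vbound1c} and~\eqref{dwbound1c}. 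The main subtlety, and the only place one must be careful, is ensuring that the solutions $v_k$ extracted from the exhaustion genuinely converge to the solution with the prescribed asymptotics at infinity, rather than merely to some weak solution; this is handled by invoking uniqueness in the class of functions decaying at infinity, which is what Lemma~\ref{lemma:SY2} provides under condition~\eqref{fcondition}.
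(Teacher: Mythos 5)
Your proposal is correct and takes essentially the same route as the paper: existence, uniqueness and the asymptotics come from Lemma~\ref{lemma:SY2} (whose hypothesis~\eqref{fcondition} is exactly $c_1[\h]\,\norm{\shmin}_{L^{n/2}(M,\h)} < a_n$), and the estimates come from solving the Dirichlet problems~\eqref{veqn} on a compact exhaustion, applying Proposition~\ref{prop:bounds}, and passing to the limit by weak lower semicontinuity --- indeed you supply more detail (in particular on identifying the exhaustion limit with the decaying solution) than the paper, which compresses the whole argument into the single sentence preceding the theorem. The only slip is cosmetic: in the substitution into Lemma~\ref{lemma:SY2} the source term is $F = -\tfrac{1}{a_n}\shmin$ rather than $+\tfrac{1}{a_n}\shmin$, which changes nothing.
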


\subsection{Estimates for the metrics $\g_{\eps}$}
\label{epsilonsbackin}

We now apply the above results for the metrics $\g_{\eps}$. We replace $\h$ in Theorem~\ref{thm:SUSY} with $\g_{\eps}$, denoting the solution referred to there by $v_{\eps}$. We first note the following.

\begin{proposition}
\label{prop:previous}
\[
c_1[\g_{\eps}] \norm{\sepsmin}_{L^{n/2}(M, \g_{\eps})} \to 0 \quad \mbox{ as } \quad \eps \to 0.
\]
\end{proposition}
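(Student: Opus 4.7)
The plan is to combine three previously established facts: the Sobolev constant comparison~\eqref{Sobequiv}, the metric equivalence from Lemma~\ref{lemma:rho}, and the $L^{n/2}$ convergence of $\sepsmin$ from Proposition~\ref{thm:W2p}. The key observation is that the entire expression $c_1[\g_{\eps}] \norm{\sepsmin}_{L^{n/2}(M, \g_{\eps})}$ is taken with respect to the varying metric $\g_{\eps}$, whereas Proposition~\ref{thm:W2p} gives convergence with respect to the fixed metric $\g$. Passing between the two reference metrics introduces only powers of $\rho(\eps)$, which tend to $1$.

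First I would use the upper half of~\eqref{Sobequiv} to obtain
\[
c_1[\g_{\eps}] \le \rho(\eps)^{n} \, c_1[\g],
\]
so the Sobolev constants of the $\g_{\eps}$ stay bounded (in fact converge to $c_1[\g]$) as $\eps \to 0$. Next I would compare the $L^{n/2}$ norms. From Lemma~\ref{lemma:rho}, the volume elements satisfy $d\mu_{\g_{\eps}} \le \rho(\eps)^{n/2} \, d\mu_{\g}$, and therefore
\[
\norm{\sepsmin}_{L^{n/2}(M, \g_{\eps})}
= \left( \int_M |\sepsmin|^{n/2} \, d\mu_{\g_{\eps}} \right)^{2/n}
\le \rho(\eps) \, \norm{\sepsmin}_{L^{n/2}(M, \g)}.
\]

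Combining these two inequalities gives
\[
c_1[\g_{\eps}] \, \norm{\sepsmin}_{L^{n/2}(M, \g_{\eps})}
\le \rho(\eps)^{n+1} \, c_1[\g] \, \norm{\sepsmin}_{L^{n/2}(M, \g)}.
\]
Since $\rho(\eps) \to 1$ as $\eps \to 0$ by Lemma~\ref{lemma:rho}, while $c_1[\g]$ is a fixed finite constant and $\norm{\sepsmin}_{L^{n/2}(M, \g)} \to 0$ by Proposition~\ref{thm:W2p}, the product on the right tends to zero, yielding the desired conclusion. There is no real obstacle here; the only point requiring mild attention is to make sure that the bookkeeping of $\rho(\eps)$ factors (arising both from the ratio of volume elements and from the pointwise comparison of $\g$ and $\g_{\eps}$ appearing in the definition of the Sobolev constants) is done correctly, but the estimates needed are already contained in the two previous propositions.
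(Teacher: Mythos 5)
Your argument is correct and is essentially the same as the paper's: both bound $c_1[\g_{\eps}]$ by $\rho(\eps)^n c_1[\g]$ via~\eqref{Sobequiv}, convert the $L^{n/2}(M,\g_{\eps})$ norm to the $L^{n/2}(M,\g)$ norm at the cost of a factor $\rho(\eps)$ via~\eqref{equivalence}, and then invoke~\eqref{smin0} together with $\rho(\eps)\to 1$. No gaps.
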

\begin{proof}
From~\eqref{Sobequiv} and~\eqref{equivalence}, we have
\[
c_1[\g_{\eps}] \norm{\sepsmin}_{L^{n/2}(M, \g_{\eps})}
\le \rho(\eps)^n c_1[\g] \cdot \rho(\eps) \norm{\sepsmin}_{L^{n/2}(M, \g_{\eps})}.
\]
Since $\rho(\eps) \to 1$ as $\eps \to 0$, the result now follows from~\eqref{smin0}.
\end{proof}

Theorem~\ref{thm:SUSY} and Proposition~\ref{prop:previous} immediately yield the following.
\begin{corollary}
There exists $\eps_0 > 0$ such that for all $\eps < \eps_0$, there exists a unique solution of
\bel{vepeqn}
\Delta_{\g_{\eps}} v_{\eps} + \frac{1}{a_n} \sepsmin v_{\eps} + \frac{1}{a_n} \sepsmin = 0
\ee
with the property that
\bel{vepasymp}
v_{\eps}(\x) = \frac{A_{\eps}}{|\x|^{n-2}} + O(|\x|^{1-n}) \mbox{ as $|\x| \to \infty$},
\ee
where $A_{\eps}$ is a constant. This solution has the properties that
\begin{subequations}
\begin{align}
\norm{v_{\eps}}_{L^{n^*}(M, \g_{\eps})}
&\le \frac{c_1[\g_{\eps}] \, \norm{\sepsmin}_{L^{n/2}(M, \g_{\eps})}}{a_n - c_1[\g_{\eps}] \, \norm{\sepsmin}_{L^{n/2}(M, \g_{\eps})}} \, \mu_{\g_{\eps}}(\supp \sepsmin )^{1/n^*},
\label{vbound1d}
\\
\norm{\nabla v_{\eps}}_{L^2(M, \g_{\eps})}^2
&\le \frac{c_1[\g_{\eps}] \norm{\sepsmin}_{L^{n/2}(M, \g_{\eps})}^2}{\left( a_n - c_1[\g_{\eps}] \norm{\sepsmin}_{L^{n/2}(M, \g_{\eps})} \right)^2} \, \mu_{\g_{\eps}}(\supp \sepsmin)^{2/n^*},
\label{dwbound1d}
\end{align}\label{boundtogether}\end{subequations}
\end{corollary}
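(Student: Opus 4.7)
The plan is to verify the smallness hypothesis of Theorem~\ref{thm:SUSY} with $\h = \geps$ for all sufficiently small $\eps$, and then transcribe the conclusions of that theorem into $\eps$-subscripted form. There is no genuine obstacle in this argument; the analytic work has already been absorbed into Theorem~\ref{thm:SUSY} (existence/uniqueness via Lemma~\ref{lemma:SY2}, and the norm estimates via Proposition~\ref{prop:bounds}) and into Proposition~\ref{prop:previous} (which packages the smallness of the negative scalar curvature coming from Proposition~\ref{thm:W2p} and Lemma~\ref{lemma:rho}). The corollary only repackages these for later use in Section~\ref{sec:mass}.

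First I would check that $\geps$ is a legitimate input to Theorem~\ref{thm:SUSY}, which was formulated for an arbitrary smooth asymptotically flat metric $\h$. Smoothness of $\geps$ is built into the conclusion of Lemma~\ref{lemma:smooth}, and since $\geps$ coincides with $\g$ on $M \setminus K_{\eps}$ by part~(\ref{2}) of the same lemma, $\geps$ inherits the asymptotically flat structure of $\g$ verbatim, including the fall-off~\eqref{asymptotics}.

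Second, I would use Proposition~\ref{prop:previous}, which tells us that
\[
c_1[\geps] \, \norm{\sepsmin}_{L^{n/2}(M, \geps)} \longrightarrow 0 \qquad \text{as } \eps \to 0.
\]
Since $a_n = 4(n-1)/(n-2)$ is a fixed positive constant, there exists $\eps_0 > 0$ such that
\[
c_1[\geps] \, \norm{\sepsmin}_{L^{n/2}(M, \geps)} < a_n \qquad \text{for all } 0 < \eps < \eps_0,
\]
which is precisely the smallness hypothesis of Theorem~\ref{thm:SUSY}.

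Third, I would apply Theorem~\ref{thm:SUSY} with $\h$ replaced by $\geps$ (so that $\shmin$ becomes $\sepsmin$, $c_1[\h]$ becomes $c_1[\geps]$, $\mu_{\h}$ becomes $\mu_{\geps}$, and the solution $v$ becomes $v_{\eps}$). The theorem supplies a unique smooth solution to~\eqref{vepeqn} with the asymptotic expansion~\eqref{vepasymp} (where $A_{\eps}$ is the constant arising from the asymptotics supplied by Lemma~\ref{lemma:SY2}), and the bounds~\eqref{vbound1d}--\eqref{dwbound1d} are then simply the $\eps$-subscripted restatements of~\eqref{vbound1c}--\eqref{dwbound1c}, completing the proof.
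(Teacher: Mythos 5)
Your proposal is correct and follows exactly the paper's route: the corollary is obtained by verifying the smallness hypothesis of Theorem~\ref{thm:SUSY} via Proposition~\ref{prop:previous} and then specialising that theorem to $\h = \g_{\eps}$. The paper gives no further argument beyond this, so your write-up is, if anything, slightly more careful (e.g.\ in noting that $\g_{\eps}$ inherits asymptotic flatness and that $\sepsmin$ is compactly supported).
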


Hence:
\begin{theorem}
\label{thm:gotozero}
If $\g$ has non-negative scalar curvature in the distributional sense, then the solutions of~\eqref{vepeqn} have the property that
\[
v_{\eps} \to 0 \mbox{ in } L^{n^*}(M, \g_{\eps}) \mbox{ as $\eps \to 0$},
\]
and
\[
\nabla v_{\eps} \to 0 \mbox{ in } L^2(M, \g_{\eps}) \mbox{ as $\eps \to 0$}.
\]
\end{theorem}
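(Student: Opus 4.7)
The plan is to obtain both convergence statements directly from the bounds~\eqref{boundtogether} in the preceding corollary, after verifying that the volume factor $\mu_{\g_{\eps}}(\supp \sepsmin)$ stays uniformly bounded as $\eps \to 0$. First, set $\alpha_{\eps} := \frac{1}{a_n} c_1[\g_{\eps}] \norm{\sepsmin}_{L^{n/2}(M, \g_{\eps})}$, which by Proposition~\ref{prop:previous} tends to $0$. For all sufficiently small $\eps$ we then have $\alpha_{\eps} \le \tfrac{1}{2}$, so the denominators appearing in~\eqref{vbound1d} and~\eqref{dwbound1d}, which equal $a_n(1-\alpha_{\eps})$, are bounded below by $a_n/2$ uniformly in $\eps$.

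Next I would establish uniform boundedness of $\mu_{\g_{\eps}}(\supp \sepsmin)$. The proof of Proposition~\ref{thm:W2p} already observes that $\supp \sepsmin \subseteq K_{\eps}$, and Lemma~\ref{lemma:smooth}(\ref{3}) ensures that all the sets $K_{\eps}$, for $\eps$ small, are contained in some fixed compact set $\widetilde{K} \subset M$ (e.g.\ a closed neighbourhood of $K$). The equivalence from Lemma~\ref{lemma:rho}, together with monotonicity of $\det$ on positive-definite matrices, gives $d\mu_{\g_{\eps}} \le \rho(\eps)^{n/2} d\mu_{\g}$; since $\rho(\eps) \to 1$, one obtains a uniform bound $\mu_{\g_{\eps}}(\supp \sepsmin) \le C_0 := 2\, \mu_{\g}(\widetilde{K})$ for all sufficiently small $\eps$.

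Substituting these estimates into~\eqref{vbound1d} yields
\[
\norm{v_{\eps}}_{L^{n^*}(M, \g_{\eps})} \le \frac{\alpha_{\eps}}{1-\alpha_{\eps}} \, C_0^{1/n^*} \le 2 C_0^{1/n^*} \alpha_{\eps},
\]
which tends to $0$. For the gradient, I would rewrite the numerator in~\eqref{dwbound1d} as $c_1[\g_{\eps}] \norm{\sepsmin}_{L^{n/2}(M,\g_{\eps})}^2 = a_n \alpha_{\eps} \norm{\sepsmin}_{L^{n/2}(M,\g_{\eps})}$, so that
\[
\norm{\nabla v_{\eps}}_{L^2(M, \g_{\eps})}^2
\le \frac{\alpha_{\eps} \, \norm{\sepsmin}_{L^{n/2}(M, \g_{\eps})}}{a_n(1-\alpha_{\eps})^2} \, C_0^{2/n^*}.
\]
Both $\alpha_{\eps} \to 0$ (by Proposition~\ref{prop:previous}) and $\norm{\sepsmin}_{L^{n/2}(M, \g_{\eps})} \to 0$ (by Proposition~\ref{thm:W2p}, combined with the $L^{n/2}$-equivalence between the norms defined by $\g$ and $\g_{\eps}$ that again follows from Lemma~\ref{lemma:rho}), so the right-hand side tends to $0$.

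There is no genuine obstacle: the theorem is essentially a packaging of estimates already established. The only mildly delicate point is the uniform control of the volume factor $\mu_{\g_{\eps}}(\supp \sepsmin)$, which is handled by combining the compactness convergence $K_{\eps} \to K$ from Lemma~\ref{lemma:smooth} with the metric equivalence from Lemma~\ref{lemma:rho}.
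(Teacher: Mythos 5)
Your argument is correct and is essentially the paper's own proof, which simply cites the bounds \eqref{boundtogether} together with $\norm{\sepsmin}_{L^{n/2}(M,\g_{\eps})}\to 0$; you have merely made explicit the (true but unstated) uniform control of the volume factor $\mu_{\g_{\eps}}(\supp\sepsmin)$ via $\supp\sepsmin\subseteq K_{\eps}$ and Lemmas~\ref{lemma:smooth} and~\ref{lemma:rho}.
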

\begin{proof}
This follows directly from~\eqref{boundtogether} and the fact that $\norm{\sepsmin}_{L^{n/2}(M, \g_{\eps})} \to 0$ as $\eps \to 0$.
\end{proof}

\section{Mass calculations}
\label{sec:mass}

\begin{proposition}
\label{conformal}
The conformally rescaled metrics ${\ghat}_{\eps} := u_{\eps}^{4/(n-2)} \g_{\eps}$ are asymptotically flat and have non-negative scalar curvature. Furthermore, $\adm(\ghat_{\eps}) \to \adm(\g_{\eps})$ as $\eps \to 0$.
\end{proposition}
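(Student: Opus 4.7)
The plan is to establish the three assertions in sequence by exploiting the rewriting of~\eqref{vepeqn}: setting $u_\eps := 1 + v_\eps$ puts the equation in the clean form
\[
\Delta_{\g_\eps} u_\eps + \frac{1}{a_n} \sepsmin \, u_\eps = 0,
\]
which drives the whole argument.

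First I would show that $u_\eps > 0$ on $M$. The idea is to test the PDE against the negative part $u_\eps^- := \max(-u_\eps, 0)$, which is compactly supported because $u_\eps \to 1$ at infinity by~\eqref{vepasymp}. Integration by parts combined with H{\"o}lder and the Sobolev inequality~\eqref{SYSob} yields
\[
\norm{\nabla u_\eps^-}_{L^2(M, \g_\eps)}^2 = \frac{1}{a_n} \int_M \sepsmin \, (u_\eps^-)^2 \, d\mu_{\g_\eps} \le \alpha_\eps \, \norm{\nabla u_\eps^-}_{L^2(M, \g_\eps)}^2,
\]
where $\alpha_\eps := \frac{1}{a_n} c_1[\g_\eps] \norm{\sepsmin}_{L^{n/2}(M, \g_\eps)}$. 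By Proposition~\ref{prop:previous} we have $\alpha_\eps < 1$ for small $\eps$, forcing $u_\eps^- \equiv 0$. Since $u_\eps \ge 0$ then makes $u_\eps$ $\g_\eps$-superharmonic (indeed $\Delta_{\g_\eps} u_\eps = -\frac{1}{a_n} \sepsmin u_\eps \le 0$), and $u_\eps \to 1$ at infinity precludes $u_\eps \equiv 0$, the strong minimum principle upgrades non-negativity to strict positivity $u_\eps > 0$ everywhere. Combining the PDE with the standard conformal transformation law for the scalar curvature now gives
\[
\frac{1}{a_n} s_{\ghat_\eps} \, u_\eps^{(n+2)/(n-2)} = -\Delta_{\g_\eps} u_\eps + \frac{1}{a_n} s_{\g_\eps} u_\eps = \frac{1}{a_n} (s_{\g_\eps})_+ \, u_\eps,
\]
so that $s_{\ghat_\eps} = (s_{\g_\eps})_+ \, u_\eps^{-4/(n-2)} \ge 0$.

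For asymptotic flatness and the mass formula, I would use that $\g_\eps = \g$ and $\sepsmin = 0$ on $M \setminus K_\eps$, so $v_\eps$ is $\g_\eps$-harmonic on the end. Standard elliptic decay theory should then refine~\eqref{vepasymp} to derivative bounds $\partial v_\eps = O(|\x|^{1-n})$ and $\partial^2 v_\eps = O(|\x|^{-n})$, from which the expansion $u_\eps^{4/(n-2)} = 1 + \frac{4 A_\eps}{(n-2) |\x|^{n-2}} + O(|\x|^{1-n})$ together with the falloff~\eqref{asymptotics} of $\g_\eps$ gives asymptotic flatness of $\ghat_\eps$ with rate $q = n-2$. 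A direct evaluation of the ADM surface integral for the perturbation $h := \ghat_\eps - \g_\eps$ is then a standard computation and yields
\[
\adm(\ghat_\eps) = \adm(\g_\eps) + 2 A_\eps.
\]

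The crux, and the step I expect to be most delicate, is to show $A_\eps \to 0$. Integrating~\eqref{vepeqn} over a large coordinate ball $B_R$ in the end and applying the divergence theorem converts the volume integral of $\Delta_{\g_\eps} v_\eps$ into a boundary flux which, by the decay~\eqref{vepasymp} and the Euclidean asymptotics of $\g_\eps$, converges to $-(n-2) \omega_{n-1} A_\eps$ as $R \to \infty$. Passing to the limit and rearranging yields
\[
(n-2) \omega_{n-1} A_\eps = \frac{1}{a_n} \int_M \sepsmin (1 + v_\eps) \, d\mu_{\g_\eps},
\]
and a double application of H{\"o}lder bounds the right-hand side by
\[
\norm{\sepsmin}_{L^{n/2}(M, \g_\eps)} \left( \mu_{\g_\eps}(\supp \sepsmin)^{1 - 2/n} + \norm{v_\eps}_{L^{n^*}(M, \g_\eps)} \right).
\]
Since $K_\eps \to K$ implies $\mu_{\g_\eps}(\supp \sepsmin)$ is uniformly bounded, Proposition~\ref{thm:W2p} and Theorem~\ref{thm:gotozero} force both factors to vanish, giving $A_\eps \to 0$ and hence $\adm(\ghat_\eps) \to \adm(\g_\eps)$. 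The main technical obstacle I anticipate is making the quantitative asymptotic derivative control on $v_\eps$ used both in the mass formula and in the flux limit rigorous and uniform in $\eps$; this should follow from interior Schauder estimates applied on the end (where the equation is strictly elliptic with smooth coefficients and no source), but deserves some care.
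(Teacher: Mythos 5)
Your proposal is correct and follows essentially the same route as the paper: conformal rescaling, the mass shift $\adm(\ghat_{\eps}) = \adm(\g_{\eps}) + 2A_{\eps}$, and an integration-by-parts identity for $A_{\eps}$ estimated via H\"older, Proposition~\ref{thm:W2p} and Theorem~\ref{thm:gotozero}. The only differences are cosmetic and in your favour: you test the equation against $1$ rather than $u_{\eps}$ (so you need only $v_{\eps} \to 0$ in $L^{n^*}$, not $\nabla v_{\eps} \to 0$ in $L^2$), and you supply the positivity of $u_{\eps}$ that the paper leaves implicit; just note that your H\"older bound on $\int \sepsmin v_{\eps}$ should carry an extra (harmless, uniformly bounded) factor $\mu_{\g_{\eps}}(\supp \sepsmin)^{1 - \frac{n+2}{2n}}$, since $\frac{2}{n} + \frac{1}{n^*} < 1$.
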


\begin{proof}
The scalar curvature of the metric ${\ghat}_{\eps}$ is non-negative by construction. The asymptotics of the function $u_{\eps} \equiv 1 + v_{\eps}$ given in Theorem~\ref{thm:SUSY} imply that ${\ghat}_{\eps}$ is asymptotically flat, and that the mass of $(M, \ghat_{\eps})$ is related to that of $(M, \g_{\eps})$ by the relation
\[
\adm(\ghat_{\eps}) = \adm(\g_{\eps}) + 2 A_{\eps},
\]
where $A_{\eps}$ is the constant that appears in the asymptotic expansion~\eqref{vepasymp}. The result will therefore follow if we can show that $A_{\eps} \to 0$ as $\eps \to 0$. From the fact that $u_{\eps}$ satisfies $\Delta_{\g_{\eps}} u_{\eps} + \frac{1}{a_n} \sepsmin u_{\eps} = 0$ and the asymptotics of $u_{\eps}$, it follows that
\[
A_{\eps} = - \frac{1}{(n-2) \omega_{n-1}} \int_M \left[ |\nabla_{\g_{\eps}} u_{\eps}|_{\g_{\eps}}^2 - \frac{1}{a_n} \sepsmin u_{\eps}^2 \right] \, d\mu_{\g_{\eps}}.
\]
Theorem~\ref{thm:gotozero} implies that the first term in the integral on the right-hand-side converges to $0$ as $\eps \to 0$. For the second term, we have
\[
\left| \int_M \sepsmin u_{\eps}^2 \, d\mu_{\g_{\eps}} \right|
= \left| \int_{K_{\eps}} \sepsmin u_{\eps}^2 \, d\mu_{\g_{\eps}} \right|
\le \norm{\sepsmin}_{L^{n/2}(K_{\eps}, \g_{\eps})} \norm{u_{\eps}}_{L^{n^*}(K_{\eps}, \g_{\eps})}^2.
\]
Moreover,
\begin{align*}
\norm{u_{\eps}}_{L^{n^*}(K_{\eps}, \g_{\eps})}
&\le \norm{1}_{L^{n^*}(K_{\eps}, \g_{\eps})} + \norm{v_{\eps}}_{L^{n^*}(K_{\eps}, \g_{\eps})}
\\
&= \mu_{\g_{\eps}}(K_{\eps})^{1/n^*} + \norm{v_{\eps}}_{L^{n^*}(K_{\eps}, \g_{\eps})} \to \mu_{\g}(K)^{1/n^*} \quad \mbox{as $\eps \to 0$},
\end{align*}
by Theorem~\ref{thm:gotozero}. From compactness of $K$, we have $\mu_{\g}(K) < \infty$. Since $\norm{\sepsmin}_{L^{n/2}(K, \g_{\eps})} \to 0$, it follows that $\int_M \sepsmin u_{\eps}^2 \, d\mu_{\g_{\eps}} \to 0$ as $\eps \to 0$. Therefore $\adm(\ghat_{\eps}) \to \adm(\g_{\eps})$ as $\eps \to 0$.
\end{proof}

\begin{proof}[Completion of the proof of Theorem~\ref{thm}]
Since $\adm(\g_{\eps}) = \adm(\g)$, we deduce from Proposition~\ref{conformal} that $\adm(\ghat_{\eps}) \to \adm(\g)$ as $\eps \to 0$. The classical positive mass theorem implies that $\adm(\ghat_{\eps}) \ge 0$ for $\eps > 0$. Therefore, $\adm(\g) \ge 0$.
\end{proof}

\subsection{Lower bound on the mass}
Our techniques also allow us to treat metrics for which the scalar curvature is \emph{not\/} non-negative, but the negative part of the scalar curvature has sufficiently small $L^{n/2}$ norm. In this case, we may derive a (negative) lower bound on the mass of $(M, \g)$. To our knowledge, this is the first known result of this type.

\begin{theorem}
Let $\g$ be an asymptotically flat metric with $\sgmin$ of compact support and satisfying the condition that
\[
c_1[\g] \, \norm{\sgmin}_{L^{n/2}(M, \g)} < a_n,
\]
where
\[
a_n := 4 \, \frac{n-1}{n-2}.
\]
Then the mass of the metric $\g$ satisfies
\bel{lowerbdonmass}
\adm(\g)
\ge - \frac{1}{2 (n-1) \omega_{n-1}} \frac{\norm{\sgmin}_{L^{n/2}(M, \g)}}{\left( 1 - \frac{1}{a_n} c_1[\g] \, \norm{\sgmin}_{L^{n/2}(M, \g)} \right)^2} \, \mu_{\g}(\supp (\sgmin)^{2/n^*}.
\ee
\end{theorem}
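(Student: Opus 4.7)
The plan is to follow the strategy of Theorem~\ref{thm} as far as possible, the only difference being that the mass shift $2A_{\eps}$ coming from the conformal rescaling $\g_{\eps}\mapsto\ghat_{\eps}$ no longer tends to zero; one must instead extract from it the explicit lower bound~\eqref{lowerbdonmass}. I first apply Lemma~\ref{lemma:smooth} to produce smooth $\g_{\eps}$ agreeing with $\g$ outside a compact $K_{\eps}$, so that $\adm(\g_{\eps})=\adm(\g)$. Setting
\[
\alpha_{\eps} := \tfrac{1}{a_n}\,c_1[\g_{\eps}]\|\sepsmin\|_{L^{n/2}(M,\g_{\eps})}, \qquad \alpha := \tfrac{1}{a_n}\,c_1[\g]\|\sgmin\|_{L^{n/2}(M,\g)},
\]
Proposition~\ref{W2pconvergence} combined with~\eqref{Sobequiv} and~\eqref{equivalence} gives $\alpha_{\eps}\to\alpha<1$, so for small $\eps$ Theorem~\ref{thm:SUSY} produces $v_{\eps}$ with the asymptotics~\eqref{vepasymp} and the estimates~\eqref{boundtogether}. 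The rescaled metric $\ghat_{\eps}:=(1+v_{\eps})^{4/(n-2)}\g_{\eps}$ is then smooth, asymptotically flat and has non-negative scalar curvature, so $\adm(\ghat_{\eps})\ge 0$ by the classical positive mass theorem.

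By the divergence-theorem computation of Proposition~\ref{conformal},
\[
\adm(\ghat_{\eps}) = \adm(\g_{\eps}) + 2A_{\eps}, \qquad A_{\eps} = -\frac{1}{(n-2)\omega_{n-1}}\int_M\left[\,|\nabla u_{\eps}|_{\g_{\eps}}^{2} - \tfrac{1}{a_n}\sepsmin\,u_{\eps}^{2}\,\right]d\mu_{\g_{\eps}},
\]
where $u_{\eps}=1+v_{\eps}$. Since $\adm(\ghat_{\eps})\ge 0$ and $\adm(\g_{\eps})=\adm(\g)$, this gives $\adm(\g)\ge -2A_{\eps}$, reducing the proof to a sufficiently good upper bound on $A_{\eps}$.

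I obtain such a bound by discarding the non-negative gradient term and applying H\"older's inequality on $\supp\sepsmin$:
\[
A_{\eps} \le \frac{\|\sepsmin\|_{L^{n/2}(M,\g_{\eps})}}{(n-2)\,a_n\,\omega_{n-1}}\,\|u_{\eps}\|_{L^{n^*}(\supp\sepsmin,\g_{\eps})}^{2}.
\]
The triangle inequality combined with the Sobolev bound~\eqref{vbound1d} on $\|v_{\eps}\|_{L^{n^*}}$ yields
\[
\|u_{\eps}\|_{L^{n^*}(\supp\sepsmin,\g_{\eps})} \le \Big(1+\tfrac{\alpha_{\eps}}{1-\alpha_{\eps}}\Big)\mu_{\g_{\eps}}(\supp\sepsmin)^{1/n^*} = \tfrac{\mu_{\g_{\eps}}(\supp\sepsmin)^{1/n^*}}{1-\alpha_{\eps}},
\]
so, using the identity $(n-2)\,a_n = 4(n-1)$,
\[
-2A_{\eps} \ge -\frac{1}{2(n-1)\omega_{n-1}}\,\frac{\|\sepsmin\|_{L^{n/2}(M,\g_{\eps})}}{(1-\alpha_{\eps})^{2}}\,\mu_{\g_{\eps}}(\supp\sepsmin)^{2/n^*}.
\]

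Passing to the limit, $\|\sepsmin\|_{L^{n/2}(M,\g_{\eps})}$ and $\alpha_{\eps}$ converge to their $\g$-counterparts via the results of Section~\ref{sec:approx}. The main obstacle is to establish the inequality $\limsup_{\eps\to 0}\mu_{\g_{\eps}}(\supp\sepsmin)\le\mu_{\g}(\supp\sgmin)$: on $M\setminus K_{\eps}$ one has $\sepsmin = \sgmin$, which is harmless, but inside $K_{\eps}$ the mollification may a priori create sign changes of $s_{\g_{\eps}}$ disjoint from $\supp\sgmin$. I plan to overcome this by refining the partition-of-unity construction of Lemma~\ref{lemma:smooth} so that mollification is performed only on an arbitrarily small neighborhood of $\supp\sgmin$ (which one may assume contains the non-smooth locus of $\g$ for the purposes of this theorem), and then letting this neighborhood shrink after $\eps\to 0$ by continuity of the Riemannian measure under locally uniform convergence of metrics. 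Taking $\liminf$ in the displayed bound then yields~\eqref{lowerbdonmass}.
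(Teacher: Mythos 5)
Your argument reproduces the paper's proof essentially step for step: smooth $\geps$ with $\adm(\geps)=\adm(\g)$, the conformal factor $u_{\eps}=1+v_{\eps}$ from Theorem~\ref{thm:SUSY}, the identity $\adm(\ghat_{\eps})=\adm(\g)+2A_{\eps}$ with $\adm(\ghat_{\eps})\ge 0$, discarding the gradient term, H\"older on $\supp\sepsmin$, the triangle inequality plus \eqref{vbound1d} to get $\norm{u_{\eps}}_{L^{n^*}(\supp\sepsmin,\geps)}\le(1-\alpha_{\eps})^{-1}\mu_{\geps}(\supp\sepsmin)^{1/n^*}$, and then $\eps\to 0$. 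Up to the limit passage this is exactly the paper's argument and is correct.

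The one place you depart from the paper is the final limit, and your instinct there is sound: the paper simply writes ``taking the limit as $\eps\to 0$'' without justifying that $\limsup_{\eps\to 0}\mu_{\geps}(\supp\sepsmin)\le\mu_{\g}(\supp\sgmin)$, and this is genuinely not automatic, since mollification on all of $K_{\eps}$ can make $s_{\geps}$ negative on a set of non-vanishing measure disjoint from $\supp\sgmin$ (e.g.\ where $\g$ is rough but $s_{\g}=0$ a.e.). However, your proposed remedy does not work as stated: you cannot assume that $\supp\sgmin$ contains the non-smooth locus of $\g$ --- the theorem's hypotheses place no such restriction, and if $\g$ is rough away from $\supp\sgmin$ you must mollify there too, so restricting the partition of unity to a neighbourhood of $\supp\sgmin$ is not available. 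A repair that stays within the paper's toolkit is to fix a $\delta$-neighbourhood $U_{\delta}$ of $\supp\sgmin$ and split $\sepsmin=\sepsmin\chi_{U_{\delta}}+\sepsmin\chi_{M\setminus U_{\delta}}$: the localised version of the argument of Proposition~\ref{thm:W2p} (testing against non-negative $\varphi$ supported off $\supp\sgmin$, where $s_{\g}\ge 0$ distributionally) gives $\norm{\sepsmin}_{L^{n/2}(M\setminus U_{\delta},\geps)}\to 0$, so in both the H\"older step and the derivation of the $L^{n^*}$ bound on $v_{\eps}$ the contribution from $M\setminus U_{\delta}$ is $o(1)$, while the contribution from $U_{\delta}$ carries the factor $\mu_{\geps}(U_{\delta})^{1/n^*}\to\mu_{\g}(U_{\delta})^{1/n^*}$; letting $\delta\to 0$ afterwards recovers $\mu_{\g}(\supp\sgmin)^{1/n^*}$ since $\supp\sgmin$ is closed. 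With that substitution your proof goes through; without it, the step $\limsup_{\eps\to 0}\mu_{\geps}(\supp\sepsmin)\le\mu_{\g}(\supp\sgmin)$ remains unjustified.
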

\begin{proof}
As before, we construct smooth approximations, $\g_{\eps}$, that converge to $\g$ in $C^0(M) \cap W^{2, n/2}_{\loc}(M)$ as $\eps \to 0$. The only difference now is that, as $\eps \to 0$, we have
\[
\norm{\sepsmin}_{L^{n/2}(M, \g_{\eps})} \to \norm{\sgmin}_{L^{n/2}(M, \g)} \neq 0.
\]
Since $c_1[\g] \, \norm{\sgmin}_{L^{n/2}(M, \g)} < a_n$, it follows that $c_1[\g_{\eps}] \, \norm{\sepsmin}_{L^{n/2}(M, \g_{\eps})} < a_n$ for sufficiently small $\eps > 0$. Moreover, since $\g_{\eps}$ agree with $\g$ away from a compact set, the functions $\sepsmin$ have compact support. For such $\eps$, we construct the conformally transformed metrics $\ghat_{\eps}$ as before. These (smooth) metrics have non-negative scalar curvature and are asymptotically flat, therefore $\adm(\g_{\eps}) \ge 0$. We therefore have
\begin{align}
\adm(\g_{\eps}) &\ge - 2 A_{\eps}
\nonumber
\\
&= \frac{2}{(n-2) \omega_{n-1}} \int_M \left[ |\nabla_{\g} u_{\eps}|_{\g}^2 - \frac{1}{a_n} \sepsmin u_{\eps}^2 \right] \, d\mu_{\g_\eps}
\nonumber
\\
&\ge - \frac{2}{(n-2) \omega_{n-1}} \frac{1}{a_n} \norm{ \sepsmin}_{L^{n/2}(M, \g_{\eps})} \norm{u_{\eps}}_{L^{n^*} \left(\supp \sepsmin, \g_{\eps} \right)}^2.
\label{negmass}
\end{align}
Since $u_{\eps} = 1 + v_{\eps}$, the estimate~\eqref{vbound1} implies that
\begin{align*}
\norm{u_{\eps}}_{L^{n^*}(\supp \sepsmin, \g_{\eps})}
&\le \mu_{\g_{\eps}}(\supp \sepsmin)^{1/n^*} + \frac{c_1[\g_{\eps}] \, \norm{\sepsmin}_{L^{n/2}(M, \g_{\eps})}}{a_n - c_1[\g_{\eps}] \, \norm{\sepsmin}_{L^{n/2}(M, \g_{\eps})}} \mu_{\g_{\eps}}(\supp \sepsmin)^{1/n^*}
\\
&= \frac{1}{1 - \frac{1}{a_n} c_1[\g_{\eps}] \, \norm{\sepsmin}_{L^{n/2}(M, \g_{\eps})}} \mu_{\g_{\eps}}(\supp \sepsmin)^{1/n^*}.
\end{align*}
Since $\adm(\g_{\eps}) = \adm(\g)$, we therefore deduce from~\eqref{negmass} that
\[
\adm(\g) \ge - \frac{1}{2 (n-1) \omega_{n-1}}
\, \frac{\norm{ \sepsmin}_{L^{n/2}(M, \g_{\eps})}}{(1 - \frac{1}{a_n} c_1[\g_{\eps}] \, \norm{\sepsmin}_{L^{n/2}(M, \g_{\eps})})^2}
\, \mu_{\g_{\eps}}(\supp \sepsmin)^{2/n^*}
\]
for all $\eps > 0$. Taking the limit as $\eps \to 0$ gives~\eqref{lowerbdonmass}.
\end{proof}

\section{Approximation by smooth metrics}
\label{sec:approximation}

In this final section, we show how the techniques that we have developed may be used to construct smooth metrics with non-negative scalar curvature that approximate rough metrics with non-negative scalar curvature in the distributional sense. More precisely, for example, we will show that, given a metric $\g \in W^{2, p}_{\loc}(\Omega)$, where $p > \frac{n}{2}$ defined on an open set $\Omega$ that has non-negative scalar curvature in the distributional sense, one can construct smooth metrics $\ghat_{\eps}$ with non-negative scalar curvature on slightly smaller open sets with the property that $\ghat_{\eps}$ converges locally uniformly to $\g$ as $\eps \to 0$. We also show, however, that our approach breaks down when $\g$ is only assumed to have regularity $C^0(\Omega) \cap W^{2, n/2}_{\loc}(\Omega)$. In particular, for $\g \in C^0(\Omega) \cap W^{2, n/2}_{\loc}(\Omega)$, the smooth metrics $\ghat_{\eps}$ that we shall construct do not appear to converge to $\g$ in $L^{\infty}_{\loc}(\Omega)$, although they do converge to $\g$ in $L^p_{\loc}(\Omega)$ for all $p < \infty$ (in a sense that will be clarified).

As such, let $\Omega$ be an open, smooth manifold without boundary and $\g$ a Riemannian metric on $\Omega$ of regularity $W^{2, p}_{\loc}(\Omega)$ for some $\frac{n}{2} < p < \infty$. (Note that the Sobolev embedding theorem implies that $\g$ is continuous.) An argument similar to the proof of Proposition~\ref{sinLn/2} shows that the scalar curvature of $\g$ is a well-defined element of $L^p_{\loc}(\Omega)$. We assume that $\g$ has non-negative scalar curvature in the distributional sense. By taking the convolution of the components of $\g$ with a mollifier in local charts, we construct a family of smooth symmetric $(0, 2)$ tensor fields $\g_{\eps}$ on $\Omega$ with components $g_{\eps, ij} := g_{ij} * \rho_{\eps}$. As $\eps \to 0$, the metrics $\g_{\eps}$ converge to $\g$ uniformly on compact subsets of $\Omega$, and in $W^{2, p}_{\loc}(\Omega)$. Given any compact set $K \subset \Omega$, for sufficiently small $\eps > 0$, the $\g_{\eps}$ are smooth Riemannian metrics on $K$.

Let $\Omega_{\eps}$ be open subsets of $\Omega$ with the property that the sets $K_{\eps} := \overline{\Omega_{\eps}}$ are a compact exhaustion of $\Omega$ and that $\partial\Omega_{\eps}$ are smooth. Relabelling the sets $K_{\eps}$, if necessary, we assume that $\g_{\eps}$ is a smooth Riemannian metric on an open neighbourhood of $K_{\eps}$. We now solve the Dirichlet problem
\[
\Delta_{\g_{\eps}} u_{\eps} + \frac{1}{a_n} \sepsmin u_{\eps} = 0, \qquad \left. u_{\eps} \right| \delta \Omega_{\eps} = 1
\]
to give (smooth) conformally transformed metrics
\bel{ghat}
\ghat_{\eps} := u_{\eps}^{\frac{4}{n-2}} \g_{\eps}
\ee
with non-negative scalar curvature on the set $\Omega_{\eps}$. Our aim is to derive global estimates for $u_{\eps}$ on the set $\Omega$, and to show that the metrics $\ghat_{\eps}$ converge locally uniformly to $\g$.

\begin{remark}
Although the metrics $\g_{\eps}$ are smooth, the only quantity that we will be able to control when they arise from smoothing the metric $\g$ is the Sobolev constant $c_1[\g_{\eps}]$ and the norm $\norm{\sepsmin}_{L^p(\g_{\eps})}$. As such, all of our estimates for the $u_{\eps}$ should only involve these quantities.
\end{remark}

\subsection{Global estimates for the conformal factor}
\label{sec:global}

Again, for simplicity, we first develop our results for an arbitrary smooth metric $\h$ on an open set $\Omega$ with smooth boundary $\delta\Omega$. For the duration of this section, $L^p$ norms will all be with respect to $\h$, and denoted by subscripts (e.g. $\norm{f}_p$ will denote $\norm{f}_{L^p(\Omega, \h)}$, etc). For brevity, we denote $\mu_{\h}(\Omega)$ by $|\Omega|$. The Sobolev constant will be denoted by simply $c_1$.

Letting the desired conformal factor be $u = 1 + v$, we require $v$ to be a solution of the Dirichlet problem
\bel{DirProblem}
\Delta_{\h} v + f v + f = 0 \quad \mbox{in $\Omega$},
\qquad
\left. v \right| \partial \Omega = 0,
\ee
where $f := \frac{1}{a_n} \shmin \ge 0$.

\

The main results of this section are lower and upper bounds for the solution $v$ of~\eqref{DirProblem} on $\Omega$. We begin with the lower bound.

\begin{theorem}
\label{thm:lowervest}
If $\norm{f}_p$ is sufficiently small that
\bel{psmall}
c_1 \, \norm{f}_p |\Omega|^{\frac{2}{n}-\frac{1}{p}} < 1,
\ee
then, on $\Omega$, we have
\bel{lowerMoserbd}
v \ge - \chi^{\tau} \frac{\left[ c_1 \norm{f}_p \, |\Omega|^{\frac{2}{n}-\frac{1}{p}} \right]^{\frac{\sigma}{2}+1}}{1-c_1 \, \norm{f}_p \, |\Omega|^{\frac{2}{n}-\frac{1}{p}}},
\ee
where $\chi := \frac{n^*}{(2p)^*}, \sigma = \frac{1}{\chi - 1}, \tau = \frac{\chi}{\left( \chi - 1 \right)^2}$.
\end{theorem}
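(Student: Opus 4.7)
The plan is to run a Moser iteration on the negative part $v^{-} := \max(-v, 0) \ge 0$, which vanishes on $\partial\Omega$ since $v|_{\partial\Omega} = 0$, and thereby bound $\norm{v^{-}}_{\infty}$ from above. For the starting estimate, assumption \eqref{psmall} combined with H\"older's inequality yields $c_1 \norm{f}_{n/2} \le c_1 \norm{f}_p |\Omega|^{2/n - 1/p} < 1$, so Proposition~\ref{prop:bounds} (with $f$ playing the role of $\tfrac{1}{a_n}\shmin$) applies and gives
\[
\norm{v^{-}}_{n^*} \le \norm{v}_{n^*} \le \frac{c_1 \norm{f}_p |\Omega|^{2/n - 1/p}}{1 - c_1 \norm{f}_p |\Omega|^{2/n - 1/p}} \, |\Omega|^{1/n^*}.
\]

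For each $q \ge 2$, I would test the equation $\Delta_{\h} v + f v + f = 0$ against the non-negative function $(v^{-})^{q-1}$, integrate over $\Omega$, and integrate by parts (boundary terms vanish). Using $v = -v^{-}$ and $\nabla v = -\nabla v^{-}$ on $\{v^{-} > 0\}$, and dropping the non-negative term $\int f (v^{-})^{q-1}$ that appears with the favourable sign, one obtains
\[
\frac{4(q-1)}{q^2} \int_\Omega \norme{\nabla (v^{-})^{q/2}}^2 \, d\mu_{\h} \le \int_\Omega f (v^{-})^q \, d\mu_{\h}.
\]
Applying the Sobolev inequality to $(v^{-})^{q/2}$ on the left and H\"older with conjugate exponents $(p, p/(p-1))$ on the right yields the recursive bound
\[
\norm{v^{-}}_{qn^*/2}^q \le \frac{c_1 q^2}{4(q-1)} \norm{f}_p \, \norm{v^{-}}_{qp'}^q, \qquad p' := p/(p-1).
\]

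The exponent ratio $\chi = n^* / (2p)^* = n(p-1)/[p(n-2)]$ exceeds $1$ precisely when $p > n/2$, which is exactly the regime we are in. Setting $q_k := 2\chi^k$ aligns the output exponent $q_k n^*/2 = \chi^k n^*$ at step $k$ with the input exponent $q_{k+1} p' = \chi^k n^*$ at step $k+1$, so iterating from $k = 1$ starting from the $L^{n^*}$ bound above yields
\[
\norm{v^{-}}_{\chi^K n^*} \le \norm{v^{-}}_{n^*} \prod_{k=1}^K \left[\frac{c_1 q_k^2}{4(q_k - 1)} \norm{f}_p\right]^{1/q_k}.
\]
As $K \to \infty$, two summations organise the limit: $\sum_k 1/q_k = 1/[2(\chi - 1)] = \sigma/2$ consolidates the $(c_1 \norm{f}_p)^{1/q_k}$ factors into $(c_1 \norm{f}_p)^{\sigma/2}$, while $\sum_k k \chi^{-k} = \chi/(\chi-1)^2 = \tau$ controls $\prod_k [q_k^2/(4(q_k-1))]^{1/q_k}$ by $\chi^{\tau}$, after the $\log 4$ contribution cancels the $\log 2$ part of $\sum_k (\log q_k)/q_k$ and the $\log(q_k - 1)$ correction is absorbed. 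Combining with the initial $\norm{v^{-}}_{n^*}$ estimate, and exploiting the algebraic identity $2/n^* = (2/n - 1/p)\,\sigma$ which packages every $|\Omega|$-factor into the common base $c_1 \norm{f}_p |\Omega|^{2/n - 1/p}$, produces exactly \eqref{lowerMoserbd}.

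The principal technical obstacle is the bookkeeping that squeezes the precise constant $\chi^{\tau}$ out of the infinite product: one must exploit the exact form $q_k^2/(4(q_k - 1))$ (rather than a cruder $q_k^2$) so that the $\log(q_k - 1)$ and $\log 4$ contributions combine cleanly with $\sum_k (\log q_k)/q_k$, and the geometric-series identities for $\sigma$ and $\tau$ must be invoked in this particular pairing. A subsidiary point is the admissibility of $(v^{-})^{q-1}$ as a test function for arbitrarily large $q$, which follows from smoothness of $v$ (by elliptic regularity, as $\h$ is smooth), with the a priori higher integrability $v^{-} \in L^{\chi^k n^*}$ bootstrapped along the iteration itself.
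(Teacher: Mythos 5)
Your proposal is correct and follows essentially the same route as the paper: the same starting $L^{n^*}$ estimate obtained by H\"older-reducing $\norm{f}_{n/2}$ to $\norm{f}_p|\Omega|^{2/n-1/p}$, the same Moser ladder with exponent ratio $\chi = n^*/(2p)^*$, and the same bookkeeping yielding $(c_1\norm{f}_p)^{\sigma/2}\chi^{\tau}$ together with the identity $2/n^* = (2/n-1/p)\sigma$. The only (harmless) differences are that you test directly with $(v^-)^{q-1}$ and keep the factor $q^2/(4(q-1))$, whereas the paper uses a Lipschitz truncation $H$ to justify admissibility of the test functions and works with the slightly cruder constant $\beta^2$.
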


The proof of this Theorem requires the following preliminary result, the proof of which is a straightforward modification of the calculation that leads to the bounds in Proposition~\ref{prop:bounds}.

\begin{lemma}
If $\norm{f}_{L^p}$ is sufficiently small that the condition~\eqref{psmall} holds, then the solution of~\eqref{DirProblem} satisfies the inequality
\bel{pvbound2}
\norm{v}_{n^*}
\le \frac{c_1 \, \norm{f}_p |\Omega|^{\frac{2}{n}-\frac{1}{p}}}{1 - c_1 \, \norm{f}_p |\Omega|^{\frac{2}{n}-\frac{1}{p}}} |\Omega|^{\frac{1}{n^*}}.
\ee
\end{lemma}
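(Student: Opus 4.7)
The plan is to adapt the energy-integral argument underlying Proposition~\ref{prop:bounds}: multiply the equation $\Delta_{\h} v + f v + f = 0$ by $v$, integrate over $\Omega$, and exploit the boundary condition $v|_{\partial \Omega} = 0$ to integrate the Laplacian term by parts, obtaining
\[
\norm{\nabla v}_{2}^{2} = \int_{\Omega} f v^{2} \, d\mu_{\h} + \int_{\Omega} f v \, d\mu_{\h}.
\]
The novelty compared with the $p = n/2$ case treated earlier is that we now have $f \in L^{p}$ with $p > n/2$, so the exponents in {\H}'s inequality will no longer match $L^{n^{*}}$ directly, and we must absorb the mismatch into powers of $|\Omega|$.

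For the first integral I would apply {\H} with exponents $(p, p')$, writing $\int f v^{2} \le \norm{f}_{p} \norm{v}_{2p'}^{2}$, and then use that $2 p' \le n^{*}$ (which is equivalent to $p \ge n/2$) together with the finite-volume embedding $L^{n^{*}}(\Omega) \hookrightarrow L^{2p'}(\Omega)$ to conclude
\[
\int_{\Omega} f v^{2} \, d\mu_{\h} \le \norm{f}_{p} \, |\Omega|^{\frac{2}{n} - \frac{1}{p}} \, \norm{v}_{n^{*}}^{2}.
\]
For the second integral I would use a three-factor {\H} inequality with exponents $p, q, n^{*}$ where $\frac{1}{p} + \frac{1}{q} + \frac{1}{n^{*}} = 1$, so that $\norm{1}_{q} = |\Omega|^{\frac{2}{n} - \frac{1}{p} + \frac{1}{n^{*}}}$, giving
\[
\int_{\Omega} f v \, d\mu_{\h} \le \norm{f}_{p} \, |\Omega|^{\frac{2}{n} - \frac{1}{p}} \, |\Omega|^{\frac{1}{n^{*}}} \, \norm{v}_{n^{*}}.
\]

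Combining these two estimates with the Sobolev inequality $\norm{v}_{n^{*}}^{2} \le c_{1} \norm{\nabla v}_{2}^{2}$ from Lemma~\ref{lemma:SY} yields
\[
\norm{v}_{n^{*}}^{2} \le \beta \, \norm{v}_{n^{*}}^{2} + \beta \, |\Omega|^{\frac{1}{n^{*}}} \, \norm{v}_{n^{*}},
\]
with $\beta := c_{1} \norm{f}_{p} |\Omega|^{\frac{2}{n} - \frac{1}{p}}$. Under the hypothesis~\eqref{psmall}, namely $\beta < 1$, I would divide through by $\norm{v}_{n^{*}}$ (assuming it is nonzero, else there is nothing to prove) and rearrange to obtain the claimed bound $\norm{v}_{n^{*}} \le \tfrac{\beta}{1-\beta}\,|\Omega|^{\frac{1}{n^{*}}}$.

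The proof is genuinely a routine modification, and I do not expect any serious obstacle; the only subtlety is bookkeeping the conjugate-exponent arithmetic so that the two resulting powers of $|\Omega|$ coalesce into precisely $|\Omega|^{\frac{2}{n} - \frac{1}{p}}$ inside $\beta$ and a residual $|\Omega|^{1/n^{*}}$ in the linear term. This is exactly what one needs to match the target estimate~\eqref{pvbound2}, and it recovers Proposition~\ref{prop:bounds} in the borderline case $p = n/2$ where the $|\Omega|^{\frac{2}{n} - \frac{1}{p}}$ factor collapses to $1$.
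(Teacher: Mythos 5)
Your argument is correct and is precisely the ``straightforward modification of the calculation that leads to the bounds in Proposition~\ref{prop:bounds}'' that the paper invokes: the exponent bookkeeping checks out, since $\frac{1}{2p'}-\frac{1}{n^*}=\frac12\bigl(\frac{2}{n}-\frac{1}{p}\bigr)$ and $1-\frac1p-\frac{1}{n^*}=\frac{2}{n}-\frac1p+\frac{1}{n^*}$, so both powers of $|\Omega|$ land exactly where claimed. No further comment is needed.
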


\noindent{\textbf{Notation}}: For brevity, we let $A_p := c_1 \, \norm{f}_p |\Omega|^{\frac{2}{n}-\frac{1}{p}}$, which is assumed less that $1$.

\begin{proof}[Proof of Theorem~\ref{thm:lowervest}]
Our proof follows, to some extent, the Moser iteration proof of Theorem~8.15 of~\cite{GT}. However, we need to keep explicit control over the constants that appear in the estimates.

In order to get a lower bound on $v$, we set $\bar{v}=-v$, and note that $\bar{v}$ satisfies the equation
\[
\Delta_{\h} \bar{v} + f \bar{v} = f \quad \mbox{in $\Omega$},
\qquad
\left. \bar{v} \right| \partial \Omega = 0.
\]
The weak form of our differential equation is now given by
\[
\int_{\Omega} \la \nabla \Phi, \nabla \bar{v} \ra_{\h} \, d\mu_{\h} = \int_{\Omega} f \left( \bar{v} - 1 \right) \Phi \, d\mu_{\h},
\]
where $\Phi$ is a test function, which we may take to lie in $H_0^1(\Omega)$. We take $\Phi \ge 0$ in which case, using the fact that $f \ge 0$, we obtain
\bel{weak}
\int_{\Omega} \la \nabla \Phi, \nabla \bar{v} \ra_{\h} \, d\mu_{\h} \le \int_{\Omega} f \bar{v} \Phi \, d\mu_{\h}.
\ee
Let $w := {\bar{v}}_+$. Note that $dw = 0$ almost everywhere on the set $\bar{v} \le 0$ and $dw = d\bar{v}$ on the set $\bar{v} > 0$. Let $G \colon [0, \infty) \to \R$ be a non-decreasing Lipschitz function with $G(0) = 0$. It follows~\cite[Theorem~3.1.7]{Morrey} that $\Phi(x) := G(w(x))$ lies in $H_0^1(\Omega)$ and is therefore an admissible test function that we can insert in~\eqref{weak}. Moreover, $\Phi = 0$ on the set $\bar{v} \le 0$. We therefore find that
\[
\int_{\Omega} G'[w] |\nabla w|_{\h}^2 \, d\mu_{\h} \le \int_{\Omega} f w G[w] \, d\mu_{\h}.
\]
Let $H \colon [0, \infty) \to [0, \infty)$ be a function defined by
\[
G'(t) = \left( H'(t) \right)^2, \qquad H(0) = 0.
\]
We then have
\[
\int_{\Omega} |\nabla H[w]|_{\h}^2 \, d\mu_{\h} \le \int_{\Omega} f w G[w] \, d\mu_{\h}.
\]

Given $\beta \ge 1$ and $N > 0$, we take $H$ to be the Lipschitz function
\bel{H}
H[t] := \begin{cases} t^{\beta} &0 \le t \le N, \\ \beta N^{\beta - 1} \left( t - N \right) &t > N.\end{cases}
\ee
In this case, $G[t] \le t G'[t] = t H'[t]^2$ for $t \ge 0$. Since $H[w] \in H_0^1(\Omega)$, the Sobolev inequality implies that
\[
\norm{H[w]}^2
\le c_1 \int_{\Omega} |\nabla H[w]|_{\h}^2 \, d\mu_{\h}
\le c_1 \int_{\Omega} f \left( w H'[w] \right)^2 \, d\mu_{\h}
\]
Taking the limit as $N \to \infty$, we deduce that
\[
\norm{w}_{\beta n^*}^2
\le \beta^2 c_1 \int_{\Omega} f w^{2\beta} \, d\mu_{\h}
\le \beta^2 c_1 \norm{f}_p \norm{w^{2\beta}}_{\frac{p}{p-1}}.
\]
Letting
\[
C := \left[ c_1 \norm{f}_p \right]^{1/2},
\]
we therefore have
\[
\norm{w}_{\beta n^*} \le \left( C \beta \right)^{1/\beta} \norm{w}_{\beta (2p)^*}.
\]
Defining
\[
\chi := \frac{n^*}{(2p)^*} > 1,
\]
we rewrite this inequality in the form
\[
\norm{w}_{\beta \chi (2p)^*} \le \left( C \beta \right)^{1/\beta} \norm{w}_{\beta (2p)^*}
\]
Starting with $\beta = \chi^m$, with $m$ a positive integer, and iterating, we have
\begin{align*}
\norm{w}_{\chi^{m+1} (2p)^*}
\le \left( C \chi^m \right)^{1/\chi^m} \norm{w}_{\chi^m (2p)^*}
\le \dots \le
C^{\sigma_m} \chi^{\tau_m} \norm{w}_{n^*},
\end{align*}
where
\[
\sigma_m = \sum_{i=1}^m \frac{1}{\chi^i}, \qquad
\tau_m = \sum_{i=1}^m \frac{i}{\chi^i}.
\]
Letting $m \to \infty$, we deduce that
\[
\sup_{\Omega} w \le C^{\sigma} \chi^{\tau} \norm{w}_{n^*},
\]
where
\[
\sigma = \frac{1}{\chi - 1}, \qquad
\tau = \frac{\chi}{\left( \chi - 1 \right)^2}.
\]
We now use the fact that $\sup \bar{v} \le \sup \bar{v}_+ = \sup w$ and
\[
\norm{w}_{n^*} = \norm{\bar{v}_+}_{n^*} = \norm{v_-}_{n^*} \le \norm{v}_{n^*}
\]
to give
\[
\sup_{\Omega} \bar{v}
\le C^{\sigma} \chi^{\tau} \frac{A_p}{1-A_p} |\Omega|^{1/n^*}
\equiv A_p^{\sigma/2} \chi^{\tau} \frac{A_p}{1-A_p},
\]
where we recall that we have defined $A_p := c_1 \, \norm{f}_{L^p} |\Omega|^{\frac{2}{n}-\frac{1}{p}}$, and we have substituted the inequality~\eqref{pvbound2} in the final step. We have thus established~\eqref{lowerMoserbd}.
\end{proof}

Acquiring an upper estimate for $v$ is a little more delicate:

\begin{theorem}
\label{thm:uppervest}
If $\norm{f}_p$ is sufficiently small that the condition~\eqref{psmall} holds, then we have
\bel{vupperbd}
v \le \chi^{\tau} \frac{c_1 \, \norm{f}_p |\Omega|^{\frac{2}{n}-\frac{1}{p}}}{1 - c_1 \, \norm{f}_p |\Omega|^{\frac{2}{n}-\frac{1}{p}}},
\ee
where $\chi := \frac{n^*}{(2p)^*}$, $\tau = \frac{\chi}{\left( \chi - 1 \right)^2}$.
\end{theorem}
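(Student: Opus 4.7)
The plan is to mirror the Moser iteration of Theorem~\ref{thm:lowervest} but applied to $w := v_+$ (which lies in $H_0^1(\Omega)$ because $v|_{\partial\Omega} = 0$), from which $\sup_\Omega v \le \sup_\Omega w$. The principal new difficulty is the inhomogeneous source term in $\Delta_{\h} v + f v + f = 0$: in the lower-bound proof, the change of variable $\bar v = -v$ converted the equation into $\Delta_{\h} \bar v + f \bar v = f$, and the sign of $f \ge 0$ permitted the term on the right-hand side to be discarded after testing against $\Phi \ge 0$. Here the constant ``$+1$'' in $(v+1)$ is always positive, and so the analogous simplification is unavailable.

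I insert $\Phi = G[w] \in H_0^1(\Omega)$, with $G$ the non-decreasing Lipschitz function from the proof of Theorem~\ref{thm:lowervest}, into the weak form $\int_\Omega \langle \nabla \Phi, \nabla v\rangle_{\h}\, d\mu_{\h} = \int_\Omega f(v+1)\Phi\, d\mu_{\h}$. Since $\Phi$ and $\nabla\Phi$ are supported on $\{v > 0\}$, where $v = w$, this becomes
\[
\int_\Omega G'[w]\, |\nabla w|_{\h}^2 \, d\mu_{\h} = \int_\Omega f(w+1)\, G[w] \, d\mu_{\h}.
\]
I split the right-hand side as $\int f w G[w] + \int f G[w]$ and use the inequality $G[w] \le w G'[w] = w(H'[w])^2$, valid whenever $G' = (H')^2$ is monotone (in particular when $H[w] = w^\beta$, $\beta \ge 1$). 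This bounds the second integral by something of the same form as the first, and on taking $H$ as in~\eqref{H} and letting $N \to \infty$ yields
\[
\int_\Omega |\nabla w^\beta|_{\h}^2 \, d\mu_{\h} \le \beta^2 \int_\Omega f \, w^{2\beta-1}(w+1)\, d\mu_{\h}.
\]

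The key technical step is then to handle the cross term $w^{2\beta-1}(w+1)$; the naive bound leaves a divergent inhomogeneous contribution. I decompose $\Omega = \{w \le 1\} \sqcup \{w > 1\}$: on the first set $w^{2\beta-1}(w+1) \le 2$, and on the second $w+1 \le 2 w$, so $w^{2\beta-1}(w+1) \le 2 w^{2\beta}$. Hence $\int f(w+1)w^{2\beta-1} \le 2\int f w^{2\beta} + 2\int f$. Combining with the Sobolev inequality applied to $w^\beta \in H_0^1(\Omega)$ and H\"older with $\|f\|_p$, and writing the result in the dimensionless averaged form $\langle \cdot \rangle_q := \|\cdot\|_q / |\Omega|^{1/q}$, I obtain an inequality of the shape
\[
\langle w \rangle_{\beta n^*}^{2\beta} \le C A_p \beta^2 \bigl( 1 + \langle w \rangle_{\beta(2p)^*}^{2\beta} \bigr),
\]
with a numerical constant $C$ and $A_p := c_1 \|f\|_p |\Omega|^{2/n - 1/p}$.

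Setting $\beta = \chi^m$ and iterating from $m = 1$ upward, the iterative structure is identical to that of Theorem~\ref{thm:lowervest}, so the same infinite product $\prod_{m \ge 1} \chi^{m/\chi^m} = \chi^\tau$ produces the desired prefactor. The case $\langle w \rangle_{\chi^m (2p)^*} \le 1$ at any stage of the iteration gives a trivial bound for $\sup w$; in the non-trivial branch the $1$ on the right-hand side is absorbed into $\langle w \rangle_{\beta(2p)^*}^{2\beta}$, and the iteration is initialised using the $L^{n^*}$ bound of Proposition~\ref{prop:bounds}, namely $\langle w \rangle_{n^*} \le \langle v \rangle_{n^*} \le A_p / (1 - A_p)$. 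The product of these pieces yields $\sup_\Omega v \le \sup_\Omega w \le \chi^\tau A_p / (1 - A_p)$, as claimed. The domain decomposition above is the one place where genuine new work, beyond what is already done in the proof of Theorem~\ref{thm:lowervest}, is required; without it, a naive Moser iteration produces an additive $\int f$-term that prevents the product from closing up.
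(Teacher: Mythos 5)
There is a genuine gap, and it sits exactly at the step you single out as the key new work: the decomposition of $\Omega$ into $\{w\le 1\}$ and $\{w>1\}$. That splitting converts the source term into an additive constant, $\int_\Omega f(w+1)w^{2\beta-1}\,d\mu \le 2\int_\Omega fw^{2\beta}\,d\mu+2\int_\Omega f\,d\mu$, so that after normalising, your recursion reads $\la w\ra_{\beta n^*}^{2\beta}\le 2A_p\beta^2\bigl(\la w\ra_{\beta(2p)^*}^{2\beta}+1\bigr)$. Writing $y_m:=\la w\ra_{\chi^m n^*}$ and taking $\beta=\chi^m$, this gives $y_m\le\bigl(4A_p\chi^{2m}\bigr)^{1/(2\chi^m)}\max(y_{m-1},1)$. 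The prefactor tends to $1$ as $m\to\infty$ no matter how small $A_p$ is, so the iteration bottoms out at a bound for $\sup_\Omega w$ of order $1$, not of order $A_p/(1-A_p)$. Moreover, in the regime that matters for the applications ($A_p$ small, so that $v_\eps\to 0$), one has $y_0\le A_p/(1-A_p)<1$ and you land in your ``trivial branch'' at the very first step; that branch is not in fact trivial, since $\la w\ra_q\le 1$ for a finite $q$ does not control $\sup_\Omega w$, and continuing the iteration from it only returns the order-one bound just described. The resulting estimate therefore does not vanish as $\norm{f}_p\to 0$, which is what~\eqref{vupperbd} asserts. (Even in the ``non-trivial branch'' the accumulated constants give $(4A_p)^{\sigma/2}\chi^\tau A_p/(1-A_p)$ rather than the stated bound, but that is secondary.)

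The paper's proof avoids this by \emph{not} separating the ``$+1$'': it applies H\"older directly to $\int f\,w^{2\beta-1}$, obtaining $\norm{f}_p\norm{w^{2\beta-1}}_{p/(p-1)}\le\norm{f}_p\norm{w}_{\beta(2p)^*}^{2\beta-1}|\Omega|^{1/(\beta(2p)^*)}$, so that the inhomogeneous contribution still carries $2\beta-1$ powers of the (small) norm of $w$; this is~\eqref{preclaim}. One then proves by induction the quantitative statement~\eqref{claim}: at each step the bracket $\bigl[|\Omega|^{1/(\beta(2p)^*)}+\norm{w}_{\beta(2p)^*}\bigr]$ costs only the factor $1+\frac{A_p}{1-A_p}=\frac{1}{1-A_p}$, which combines with the factor $\bigl(\frac{A_p}{1-A_p}\bigr)^{2\beta-1}$ from the induction hypothesis and the remaining $c_1\norm{f}_p$ to reproduce exactly $\bigl(\frac{A_p}{1-A_p}\bigr)^{2\beta}$, with no stray numerical constants or powers of $A_p$ accumulating in the infinite product, and the base case is supplied by~\eqref{pvbound2}. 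Your remaining steps (the choice of test function, the inequality $G[w]\le wG'[w]$, and the iteration scaffolding with $\beta=\chi^m$) are sound; it is only the treatment of the inhomogeneous term that needs to be replaced by the multiplicative one above.
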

\begin{proof}
Let $w := v_+ \in H_0^1(\Omega)$. Given $\beta \ge 1$ and $N > 0$, it follows that $H[w]$ as defined in~\eqref{H} lies in $H_0^1(\Omega)$. The Sobolev inequality and integration by parts then yields
\begin{align}
\norm{w^{\beta}}_{n^*}^2
&\le c_1 \int_{\Omega} |\nabla w^{\beta}|^2 \, d\mu_{\h}
\nonumber
\\
&= - c_1 \frac{\beta^2}{2\beta - 1} \int_{\Omega} w^{2 \beta - 1} \Delta w \, d\mu_{\h}
\nonumber
\\
&= c_1 \frac{\beta^2}{2\beta - 1} \int_{\Omega} w^{2 \beta - 1} f \left( 1 + w \right) \, d\mu_{\h}
\nonumber
\\
&\le c_1 \beta^2 \norm{f}_{L^p(\Omega)} \left[ \norm{w^{2 \beta - 1}}_{p/(p-1)} + \norm{w^{2 \beta}}_{p/(p-1)} \right].
\label{MoserLp}\end{align}
In addition, {\H}'s inequality yields
\[
\norm{w^{2 \beta - 1}}_{p/(p-1)}
= \norm{w}_{(2 \beta - 1) \frac{p}{p-1}}^{2 \beta - 1}
\le \norm{w}_{2 \beta \frac{p}{p-1}}^{2 \beta - 1} |\Omega|^{\frac{1}{2\beta} \left( 1 - \frac{1}{p} \right)}
= \norm{w}_{\beta (2p)^*}^{2 \beta - 1} |\Omega|^{\frac{1}{\beta (2p)^*}}.
\]
We therefore have
\bel{preclaim}
\norm{w}_{\beta n^*}^{2\beta}
\le c_1 \beta^2 \norm{f}_p
\norm{w}_{\beta (2p)^*}^{2 \beta - 1}
\left[ |\Omega|^{\frac{1}{\beta (2p)^*}} + \norm{w}_{\beta (2p)^*} \right]
\ee

Letting $\chi := \frac{n^*}{(2p)^*} > 1$, we now claim that it follows from~\eqref{preclaim} and~\eqref{pvbound2} that \bel{claim}
\norm{w}_{\chi^m n^*} \le \frac{A_p}{1-A_p} \chi^{\frac{1}{\chi} + \dots + \frac{m}{\chi^m}} |\Omega|^{\frac{1}{\chi^m n^*}}
\ee
for all integers $m \ge 1$. We prove this claim by induction. From~\eqref{preclaim}, taking $\beta = \chi$, we have
\[
\norm{w}_{\chi n^*}^{2\chi}
\le c_1 \chi^2 \norm{f}_p
\norm{w}_{n^*}^{2 \chi - 1}
\left[ |\Omega|^{\frac{1}{n^*}} + \norm{w}_{L^n*(\Omega)} \right]
\]
We now note that
\[
\norm{w}_{n^*} =
\norm{v_+}_{n^*} \le
\norm{v}_{n^*}
\le \frac{A_p}{1 - A_p} |\Omega|^{\frac{1}{n^*}},
\]
where the final inequality follows from~\eqref{pvbound2}. We therefore have
\begin{align*}
\norm{w}_{\chi n^*}^{2\chi}
&\le c_1 \chi^2 \norm{f}_p
\left( \frac{A_p}{1 - A_p} |\Omega|^{\frac{1}{n^*}} \right)^{2 \chi - 1}
\left[ |\Omega|^{\frac{1}{n^*}} + \frac{A_p}{1 - A_p} |\Omega|^{\frac{1}{n^*}} \right]
\\
&= c_1 \chi^2 \norm{f}_p |\Omega|^{\frac{2\chi}{n^*}}
\frac{A_p^{2 \chi - 1}}{(1 - A_p)^{2 \chi}}
\\
&= \chi^2 |\Omega|^{\frac{2}{(2p)^*}-\frac{2}{n}+\frac{1}{p}}
\left( \frac{A_p}{1 - A_p} \right)^{2 \chi}
\\
&= \chi^2 |\Omega|^{\frac{1}{\chi n^*}}
\left( \frac{A_p}{1 - A_p} \right)^{2 \chi}
\end{align*}
Therefore, the claim is established for $m = 1$. Assuming it to be true for some $m \ge 1$, taking $\beta = \chi^{m+1}$ in~\eqref{preclaim}, we have
\begin{align*}
\norm{w}_{\chi^{m+1} n^*}^{2\chi^{m+1}}
&\le c_1 \chi^{2m+2} \norm{f}_p
\norm{w}_{\chi^m n^*}^{2 \chi^{m+1} - 1}
\left[ |\Omega|^{\frac{1}{\chi^m n^*}} + \norm{w}_{\chi^m n*} \right]
\\
&\le
c_1 \chi^{2m+2} \norm{f}_p
\left[ \frac{A_p}{1-A_p} \chi^{\frac{1}{\chi} + \dots + \frac{m}{\chi^m}} |\Omega|^{\frac{1}{\chi^m n^*}} \right]^{2 \chi^{m+1} - 1}
\\
&\hskip 1cm \times \left[ |\Omega|^{\frac{1}{\chi^m n^*}} + \frac{A_p}{1-A_p} \chi^{\frac{1}{\chi} + \dots + \frac{m}{\chi^m}} |\Omega|^{\frac{1}{\chi^m n^*}} \right]
\\
&\le
c_1 \chi^{2m+2} \norm{f}_p
\left[ \frac{A_p}{1-A_p} \chi^{\frac{1}{\chi} + \dots + \frac{m}{\chi^m}} |\Omega|^{\frac{1}{\chi^m n^*}} \right]^{2 \chi^{m+1} - 1}
\\
&\hskip 1cm \times |\Omega|^{\frac{1}{\chi^m n^*}}
\chi^{\frac{1}{\chi} + \dots + \frac{m}{\chi^m}} \left[ 1 + \frac{A_p}{1-A_p} \right]
\\
&=
c_1 \chi^{2m+2} \norm{f}_p
\frac{A_p^{2 \chi^{m+1} - 1}}{(1-A_p)^{2 \chi^{m+1}}}
|\Omega|^{\frac{2 \chi}{n^*}}
\chi^{2 \chi^{m+1} \left( \frac{1}{\chi} + \dots + \frac{m}{\chi^m} \right)},
\\
&=
\left( \frac{A_p}{1-A_p} \right)^{2 \chi^{m+1}}
|\Omega|^{\frac{2}{n^*}}
\chi^{2 \chi^{m+1} \left( \frac{1}{\chi} + \dots + \frac{m}{\chi^m} + \frac{m+1}{\chi^{m+1}}\right)},
\end{align*}
where we have used the fact that $1 \le \chi$ in the third inequality. Taking the $2 \chi^{m+1}$ root, we have therefore established the claim~\eqref{claim} for $m+1$. Therefore, by induction,~\eqref{claim} holds for all $m \ge 1$.

Taking the limit as $m \to \infty$ in~\eqref{claim} now yields the inequality~\eqref{vupperbd}.
\end{proof}

\subsection{Approximations to $\g$}

Applying Theorems~\ref{thm:lowervest} and~\ref{thm:uppervest} to the metrics $\g_{\eps}$, we have the following result.

\begin{proposition}
Let $\g \in W^{2, p}_{\loc}(\Omega)$ with $p > \frac{n}{2}$ have non-negative scalar curvature in the distributional sense, and let $\g_{\eps}$ be smooth approximating metrics as above. Given any compact subset $K \subset \Omega$, let $\eps(K) > 0$ be such that $K \subset \Omega_{\eps}$ for all $\eps < \eps(K)$. Then the solutions $\left. v_{\eps} \right| K$ for $\eps < \eps(K)$, converge uniformly to zero on $K$ as $\eps \to 0$.
\end{proposition}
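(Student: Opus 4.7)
The plan is to apply Theorems~\ref{thm:lowervest} and~\ref{thm:uppervest} with $\h=\g_\eps$ and domain $\Omega_\eps$. For each $\eps<\eps(K)$ these give a pointwise bound on $\Omega_\eps\supset K$,
\begin{equation*}
-\chi^\tau\,\frac{A_{p,\eps}^{\sigma/2+1}}{1-A_{p,\eps}}\;\le\;v_\eps\;\le\;\chi^\tau\,\frac{A_{p,\eps}}{1-A_{p,\eps}},
\qquad
A_{p,\eps}:=c_1[\g_\eps]\,\|f_\eps\|_{L^p(\Omega_\eps,\g_\eps)}\,\mu_{\g_\eps}(\Omega_\eps)^{\frac{2}{n}-\frac{1}{p}},
\end{equation*}
with $f_\eps:=\sepsmin/a_n$. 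Both sides are continuous monotone functions of $A_{p,\eps}$ vanishing at $0$, so the claim reduces to showing $A_{p,\eps}\to 0$ as $\eps\to 0$.

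The heart of the proof is an $L^p$-analog of Proposition~\ref{thm:W2p}. The schematic computation of Proposition~\ref{W2pconvergence} carries over, using the Sobolev embedding $W^{1,p}_{\loc}\hookrightarrow L^{np/(n-p)}_{\loc}$ (valid with target exponent $\ge 2p$ precisely because $p>n/2$) and continuity of $g^{..}$, to give $s_{\g_\eps}\to s_\g$ in $L^p_{\loc}(\Omega)$. Since $s_\g\in L^p_{\loc}$ is non-negative as a distribution, it is non-negative a.e., hence $\sgmin=0$ a.e. The pointwise bound $\sepsmin\le(s_{\g_\eps}-s_\g)_-$ then yields $\|\sepsmin\|_{L^p(\Omega',\g_\eps)}\to 0$ for every relatively compact $\Omega'\Subset\Omega$. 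The Sobolev constants $c_1[\g_\eps]$ remain bounded by~\eqref{Sobequiv} together with Lemma~\ref{lemma:rho}.

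The main obstacle I anticipate is that the factor $\mu_{\g_\eps}(\Omega_\eps)^{2/n-1/p}$ can diverge when $\mu_\g(\Omega)=\infty$, and the corresponding gap between the local $L^p$-convergence of $\sepsmin$ just established and the global norm appearing in $A_{p,\eps}$. I would handle this by localization: pick a relatively compact open $U$ with $K\subset U\Subset\Omega$ and, once $\overline U\subset\Omega_\eps$, re-run the Moser iterations of Theorems~\ref{thm:lowervest} and~\ref{thm:uppervest} using cutoff-modified test functions $\zeta\,H[w]$ for $\zeta\in C_c^\infty(U)$ equal to $1$ near $K$. After absorbing the $|\nabla\zeta|^2$ terms this produces an interior $L^\infty$-estimate that controls $\sup_K|v_\eps|$ by $\|\sepsmin\|_{L^p(U,\g_\eps)}$ and $\|v_\eps\|_{L^{n^*}(U,\g_\eps)}$; the former vanishes by the previous paragraph, and a localized version of~\eqref{pvbound2} on $U$ bounds the latter in terms of the same quantity, giving the required uniform convergence of $v_\eps|_K$ to zero.
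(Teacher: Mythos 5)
Your proposal follows the same route as the paper: apply Theorems~\ref{thm:lowervest} and~\ref{thm:uppervest} with $\h=\geps$ on $\Omega_{\eps}$, and reduce everything to showing that $A_{p,\eps}\to 0$. Where you go beyond the paper is in actually proving the input $\norm{\sepsmin}_{L^p}\to 0$, which the paper simply asserts at this point; your derivation (non-negativity of $s_{\g}$ as a distribution plus $s_{\g}\in L^p_{\loc}$ gives $s_{\g}\ge 0$ a.e., hence $\sepsmin\le (s_{\geps}-s_{\g})_-$ pointwise) is cleaner than re-running the duality argument of Proposition~\ref{thm:W2p}, and the exponent bookkeeping $np/(n-p)\ge 2p \Leftrightarrow p\ge n/2$ needed to push Proposition~\ref{W2pconvergence} up to $L^p$ is correct.

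You are also right to flag the global factors $\norm{\sepsmin}_{L^p(\Omega_{\eps},\geps)}$ and $\mu_{\geps}(\Omega_{\eps})^{2/n-1/p}$ as the weak point; the paper glosses over this entirely (its proof even writes the norm over all of $\Omega$). However, your proposed localization does not fully close this gap. An interior Moser estimate with cutoffs controls $\sup_K|v_{\eps}|$ by $\norm{v_{\eps}}_{L^{n^*}(U,\geps)}$ plus local norms of $f_{\eps}$, but $v_{\eps}$ solves a Dirichlet problem and carries a ``harmonic part'' that cannot be bounded by purely local data (locally, $\Delta v=0$ says nothing about the size of $v$); the only available bound on $\norm{v_{\eps}}_{L^{n^*}(U,\geps)}$ is the global estimate~\eqref{pvbound2} on $\Omega_{\eps}$, which requires exactly the global smallness $A_{p,\eps}<1$ and the finiteness of $|\Omega_{\eps}|^{1/n^*}$ that you were trying to avoid. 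The fix consistent with the paper's framework is simpler: the exhaustion $\Omega_{\eps}$ is at your disposal, so for fixed $K$ choose it to grow slowly enough (a diagonal argument) that $\norm{\sepsmin}_{L^p(\Omega_{\eps},\geps)}\,\mu_{\geps}(\Omega_{\eps})^{2/n-1/p}\to 0$; alternatively, work in the setting of the subsequent theorem, where $\Omega$ is a fixed relatively compact subset of a larger $\Omega'$ and all of these quantities are uniformly controlled.
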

\begin{proof}
Since $\norm{\sepsmin}_{L^p(\Omega, \g_{\eps})} \to 0$ as $\eps \to 0$, we may assume, without loss of generality, that the condition $c_1[\g_{\eps}] \, \norm{\sepsmin}_{L^p(\Omega, \g_{\eps})} \mu_{\g_{\eps}}(\Omega_{\eps})^{\frac{2}{n}-\frac{1}{p}} < a_n$ is satisfied for all $\eps < \eps(K)$. The uniform bounds on $v_{\eps}$ on $\Omega_{\eps}$ given by~\eqref{lowerMoserbd} and~\eqref{vupperbd} both converge to zero as $\eps \to 0$, therefore implying that $\left. v_{\eps} \right| K$ converge uniformly to zero on $K$.
\end{proof}

We therefore have the following result regarding approximations of $\g$.

\begin{theorem}
Let $\g$ be a Riemannian metric on an open set $\Omega$ of regularity $W^{2, p}_{\loc}(\Omega)$, $p > \frac{n}{2}$ with non-negative scalar curvature in the distributional sense. Then there exists a family of smooth, Riemannian metrics $\{ \ghat_{\eps} \mid \eps > 0 \}$ defined on open sets $\Omega_{\eps} \subset \Omega$ such that $K_{\eps} := \overline{\Omega_{\eps}}$ are a compact exhaustion of $\Omega$, such that:
\begin{itemize}
\item[$\bullet$] The $\ghat_{\eps}$ are have non-negative scalar curvature;
\item[$\bullet$] $\ghat_{\eps}$ converge locally uniformly to $\g$ as $\eps \to 0$.
\end{itemize}
\end{theorem}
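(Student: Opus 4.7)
The proof packages the constructions already assembled in this section. First I would mollify $\g$ chart-wise to obtain smooth symmetric $(0,2)$ tensor fields $\g_{\eps}$ with $\g_{\eps} \to \g$ in $W^{2,p}_{\loc}(\Omega)$ and locally uniformly; for any compact $K \subset \Omega$ and all sufficiently small $\eps$, $\g_{\eps}$ is a genuine Riemannian metric on a neighbourhood of $K$. Then fix a compact exhaustion $K_{\eps} = \overline{\Omega_{\eps}}$ of $\Omega$ with smooth boundary, chosen so that $\g_{\eps}$ is smooth and Riemannian near $K_{\eps}$, and define $\ghat_{\eps}$ by the conformal prescription~\eqref{ghat}, where $u_{\eps} = 1 + v_{\eps}$ is the Dirichlet solution stated just before~\eqref{ghat}. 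By construction $\ghat_{\eps}$ is smooth and has non-negative scalar curvature on $\Omega_{\eps}$, which settles the first bullet.

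For the convergence, fix any compact $K \subset \Omega$ and take $\eps(K) > 0$ such that $K \subset \Omega_{\eps}$ for all $\eps < \eps(K)$. The proposition immediately preceding the theorem, which applies the uniform bounds of Theorems~\ref{thm:lowervest} and~\ref{thm:uppervest} to $\g_{\eps}$, gives $v_{\eps} \to 0$ uniformly on $K$, hence $u_{\eps}^{4/(n-2)} \to 1$ uniformly on $K$. Combined with uniform convergence $\g_{\eps} \to \g$ on $K$ from mollification, this yields $\ghat_{\eps} \to \g$ uniformly on $K$, i.e.\ local uniform convergence on $\Omega$.

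The substantive input still needed is $\norm{\sepsmin}_{L^p(\Omega_{\eps}, \g_{\eps})} \to 0$, which is what triggers the smallness condition $c_1[\g_{\eps}] \, \norm{\sepsmin}_{L^p(\Omega_{\eps}, \g_{\eps})} \, \mu_{\g_{\eps}}(\Omega_{\eps})^{\frac{2}{n}-\frac{1}{p}} < 1$ required for the Moser iteration. I would establish this by running the $W^{2,p}$ analogue of Propositions~\ref{W2pconvergence} and~\ref{thm:W2p}: the proof of Proposition~\ref{W2pconvergence} goes through verbatim with $n/2$ replaced by $p$ throughout (in fact more easily, as $W^{2,p} \hookrightarrow C^0$ is automatic and $\partial g_{\eps,..} \to \partial g_{..}$ in a better space), giving $s_{\g_{\eps}} \to s_{\g}$ in $L^p_{\loc}$, and the duality argument of Proposition~\ref{thm:W2p}, now with the conjugate exponents $p$ and $p/(p-1)$ in place of $n/2$ and $n/(n-2)$, converts distributional non-negativity of $s_{\g}$ into $\norm{\sepsmin}_{L^p} \to 0$ on each fixed compact set.

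The one delicate point — the main obstacle if there is one — is the coupling between the exhaustion $\Omega_{\eps}$ and the decay of $\norm{\sepsmin}_{L^p(\Omega_{\eps}, \g_{\eps})}$: the exhaustion must not grow so rapidly that the volume factor $\mu_{\g_{\eps}}(\Omega_{\eps})^{\frac{2}{n}-\frac{1}{p}}$ outruns this decay. Since, for any fixed compact piece of $\Omega$, $\norm{\sepsmin}_{L^p}$ on that piece tends to zero as $\eps \to 0$, one can always arrange compatibility by slowing the exhaustion if necessary; there is no obstruction beyond this mild bookkeeping, and no new analytic ingredient is required beyond what Section~\ref{sec:global} already supplies.
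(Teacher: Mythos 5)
Your proposal is correct and follows essentially the same route as the paper: the theorem is obtained by combining the conformal construction~\eqref{ghat} with the proposition immediately preceding it (uniform convergence $v_{\eps}\to 0$ on compacta via Theorems~\ref{thm:lowervest} and~\ref{thm:uppervest}), together with the $L^p$ analogues of Propositions~\ref{W2pconvergence} and~\ref{thm:W2p} to supply the smallness of $\norm{\sepsmin}_{L^p}$. Your explicit attention to slowing the exhaustion $\Omega_{\eps}$ so that the volume factor $\mu_{\g_{\eps}}(\Omega_{\eps})^{\frac{2}{n}-\frac{1}{p}}$ does not outrun the decay of $\norm{\sepsmin}_{L^p(\Omega_{\eps},\g_{\eps})}$ is a point the paper leaves implicit, and is a welcome tightening rather than a departure.
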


The use of the sets $\Omega_{\eps}$ was necessary since, by smoothing $\g$ on an open set $\Omega$, we will only get locally uniform convergence of $\g_{\eps}$ to $\g$. We may remove the use of this construction if $\Omega$ is contained in a larger open set.

\begin{theorem}
Let $\g$ be a Riemannian metric on an open set $\Omega'$ of regularity $W^{2, p}_{\loc}(\Omega')$, $p > \frac{n}{2}$, that has non-negative scalar curvature in the distributional sense. Let $\Omega$ be an open subset of $\Omega'$ with compact closure $K \subset \Omega'$, smooth boundary $\partial\Omega$, such that $\Omega'$ is an open neighbourhood of $K$. Then there exists a family of smooth, Riemannian metrics $\{ \ghat_{\eps} \mid \eps > 0 \}$ on $K$ with the following properties:
\begin{itemize}
\item[$\bullet$] The $\g_{\eps}$ are have non-negative scalar curvature;
\item[$\bullet$] $\ghat_{\eps}$ converge uniformly to $\g$ on $K$ as $\eps \to 0$.
\end{itemize}
\end{theorem}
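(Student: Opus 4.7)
The plan is to mimic the proof of the previous theorem, but exploit the extra room $\Omega' \setminus K$ to avoid the shrinking-domain construction. Since $K = \overline{\Omega}$ is a compact subset of the open set $\Omega'$, we may select an open set $U$ with $\overline{\Omega} \subset U \subset \overline{U} \subset \Omega'$. Using a partition of unity argument as in Lemma~\ref{lemma:smooth}, for all sufficiently small $\eps > 0$ the mollifications $\g_{\eps}$ are smooth Riemannian metrics on $U$, converge to $\g$ uniformly on $\overline{\Omega}$, and converge in $W^{2, p}(\overline{\Omega})$. Thus, unlike the situation in the preceding theorem, we can work with the single fixed domain $\Omega$.

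Next, by the argument of Proposition~\ref{W2pconvergence} adapted to $L^p$ (using continuity of $\g$, the $W^{2,p}_{\loc}$ convergence of $\g_{\eps}$, and Sobolev embedding $W^{1,p} \hookrightarrow L^{p^*}$ since $p > n/2$), we obtain $s_{\geps} \to s_{\g}$ in $L^p(\Omega, \g)$. The distributional non-negativity of $s_{\g}$ then allows us to apply the extremal {\H} argument of Proposition~\ref{thm:W2p} (with $n/2$ replaced by $p$, and duality exponent $\frac{p}{p-1}$) to conclude that
\[
\norm{\sepsmin}_{L^p(\Omega, \geps)} \to 0 \quad \mbox{as $\eps \to 0$}.
\]
Here the fact that $K$ is compact and the volumes $\mu_{\geps}(\Omega)$ remain uniformly bounded is essential.

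Now on the fixed domain $\Omega$ with smooth boundary $\partial\Omega$, we solve the Dirichlet problem
\[
\Delta_{\geps} u_{\eps} + \tfrac{1}{a_n} \sepsmin u_{\eps} = 0 \mbox{ in } \Omega, \qquad \left. u_{\eps} \right|\partial\Omega = 1,
\]
writing $u_{\eps} = 1 + v_{\eps}$ so that $v_{\eps}$ satisfies~\eqref{DirProblem} with $f = \tfrac{1}{a_n}\sepsmin$. Since $\norm{\sepsmin}_{L^p(\Omega, \geps)} \to 0$ and $c_1[\geps], \mu_{\geps}(\Omega)$ remain bounded (by the analogue of~\eqref{Sobequiv}, as $\geps \to \g$ uniformly on $\overline{\Omega}$), the smallness hypothesis~\eqref{psmall} is satisfied for all sufficiently small $\eps$, and Theorems~\ref{thm:lowervest} and~\ref{thm:uppervest} give uniform pointwise bounds
\[
-\chi^{\tau} \frac{A_p^{\sigma/2 + 1}}{1 - A_p} \le v_{\eps} \le \chi^{\tau} \frac{A_p}{1 - A_p} \mbox{ on $\Omega$},
\]
with $A_p = c_1[\geps] \norm{\sepsmin}_{L^p(\Omega, \geps)} \mu_{\geps}(\Omega)^{2/n - 1/p} \to 0$. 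Hence $u_{\eps} \to 1$ uniformly on $\overline{\Omega}$, and the conformally rescaled smooth metrics $\ghat_{\eps} := u_{\eps}^{4/(n-2)} \geps$ have non-negative scalar curvature on $\Omega$ (a fortiori on $K$) by the conformal transformation law.

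Finally, since both $\geps \to \g$ uniformly on $K$ and $u_{\eps} \to 1$ uniformly on $K$, the product $\ghat_{\eps} = u_{\eps}^{4/(n-2)} \geps$ converges uniformly to $\g$ on $K$. The main technical point, as compared with the preceding theorem, is verifying that the $L^p$ norm $\norm{\sepsmin}_{L^p(\Omega, \geps)}$ genuinely tends to zero on the fixed domain $\Omega$ (rather than a shrinking $\Omega_{\eps}$); this is precisely where the assumption that $\g$ is defined, with the given regularity and distributional non-negativity of scalar curvature, on an open neighbourhood $\Omega' \supset \overline{\Omega}$ is used, since it guarantees that the mollification is defined on a full neighbourhood of $\overline{\Omega}$ for small~$\eps$.
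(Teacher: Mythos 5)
Your proposal is correct and follows essentially the same route as the paper: mollify on the larger set $\Omega'$ so that the $\g_{\eps}$ are smooth on a full neighbourhood of $K$, solve the Dirichlet problem on the fixed domain $\Omega$ with boundary value $1$, and invoke the $L^{\infty}$ bounds of Theorems~\ref{thm:lowervest} and~\ref{thm:uppervest} to get $u_{\eps} \to 1$ uniformly, whence $\ghat_{\eps} \to \g$ uniformly on $K$. The extra detail you supply on why $\norm{\sepsmin}_{L^p(\Omega,\g_{\eps})} \to 0$ (adapting Propositions~\ref{W2pconvergence} and~\ref{thm:W2p} to the exponent $p$) is exactly what the paper leaves implicit.
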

\begin{proof}
Smoothing $\g$ by convolution in charts on $\Omega'$ gives, for all sufficiently small $\eps$, a family of smooth Riemannian metrics $\g_{\eps}$ on the set $K$. We now solve the Dirichlet problem
\[
\Delta_{\g_{\eps}} u_{\eps} + \frac{1}{a_n} \sepsmin u_{\eps} = 0, \qquad \left. u_{\eps} \right| \partial\Omega = 1.
\]
The conformally transformed metrics $\ghat_{\eps}$ are then metrics with non-negative scalar curvature on the set $K$. The bounds on $v_{\eps}$ given by~\eqref{lowerMoserbd} and~\eqref{vupperbd} now hold on the set $\Omega$, implying that $\left. u_{\eps} \right| K$ converge uniformly to $1$ on $K$. Since the $\g_{\eps}$ converge uniformly to $\g$ on $K$, it follows that the $\ghat_{\eps}$ converge uniformly to $\g$ on $K$.
\end{proof}

Finally, in the context of the positive mass theorem, our results give the following.

\begin{theorem}
Let $M$ be a smooth manifold and $\g$ an asymptotically flat, Riemannian metric on $M$ of regularity $W^{2, p}_{\loc}(M)$ and smooth outside of the compact set $K$. Then there exist smooth metrics $\ghat_{\eps}$ on $M$ with non-negative scalar curvature that converge locally uniformly to $\g$ as $\eps \to 0$. In particular, $\g$ can be approximated locally uniformly by smooth metrics with non-negative ADM mass.
\end{theorem}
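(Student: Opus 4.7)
The plan is to apply the smoothing construction of Lemma~\ref{lemma:smooth} globally on $M$, use the conformal-rescaling machinery of Section~\ref{sec:elliptic} to produce a family $\ghat_{\eps}$ of smooth approximations with non-negative scalar curvature, and then promote the global Sobolev-type estimates on the conformal factor to locally uniform convergence via an adaptation of the Moser iteration of Section~\ref{sec:approximation}. The classical positive mass theorem, applied directly to each smooth $\ghat_{\eps}$, then yields the ``in particular'' statement.

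First, invoke Lemma~\ref{lemma:smooth} to obtain smooth metrics $\g_{\eps}$ that agree with $\g$ outside a compact set $K_{\eps}$ and converge to $\g$ locally uniformly. Since $\g \in \Wloc{2, p}$ with $p > \frac{n}{2}$, the argument of Proposition~\ref{W2pconvergence} goes through with $L^{n/2}$ replaced by $L^p$ (Sobolev embedding gives $\partial g \in L^{2p}_{\loc}$, which is the only place the exponents are used), giving $s_{\g_{\eps}} \to s_{\g}$ in $\Lp{p}$; the duality argument of Proposition~\ref{thm:W2p}, with $\tfrac{n}{n-2}$ replaced by the conjugate exponent $p' := p/(p-1)$, then yields $\norm{\sepsmin}_{L^p(M, \g_{\eps})} \to 0$. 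Since Proposition~\ref{prop:previous} in particular gives $c_1[\g_{\eps}] \, \norm{\sepsmin}_{L^{n/2}(M, \g_{\eps})} \to 0$, Theorem~\ref{thm:SUSY} provides, for all $\eps$ small enough, a unique smooth $v_{\eps}$ solving~\eqref{vepeqn} with the decay~\eqref{vepasymp}. The conformal rescalings $\ghat_{\eps} := (1 + v_{\eps})^{4/(n-2)} \g_{\eps}$ are then smooth, asymptotically flat Riemannian metrics on $M$ with $s_{\ghat_{\eps}} \ge 0$.

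It remains to show $v_{\eps} \to 0$ locally uniformly on $M$, from which locally uniform convergence $\ghat_{\eps} \to \g$ follows at once from locally uniform convergence of $\g_{\eps}$ to $\g$. The Moser iteration in Theorems~\ref{thm:lowervest} and~\ref{thm:uppervest} was executed for the Dirichlet problem on a bounded domain, so the same scheme must be rerun here with test functions of the form $G(w) \, \eta^{2\beta}$, where $\eta \in \D(M)$ is a cutoff equal to one on a prescribed compact $K^{*} \subset M$ and supported in a slightly larger compact set $\tilde K$. The extra gradient terms produced by $\nabla \eta$ give rise to a Caccioppoli-type contribution that can be absorbed using the global $L^{n^*}$ and $H^1$ control on $v_{\eps}$ coming from~\eqref{vbound1d}--\eqref{dwbound1d}. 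The main technical obstacle is verifying that these cut-off contributions remain small throughout the iteration, so that the analogues of the pointwise bounds~\eqref{lowerMoserbd} and~\eqref{vupperbd} still tend to zero as $\eps \to 0$. Once this is checked, $\norm{v_{\eps}}_{L^{\infty}(K^*)} \to 0$ for every compact $K^{*} \subset M$, yielding the desired locally uniform convergence; the classical positive mass theorem applied to the (smooth, asymptotically flat, non-negative scalar curvature) metrics $\ghat_{\eps}$ then gives $\adm(\ghat_{\eps}) \ge 0$, completing the proof.
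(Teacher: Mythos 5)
Your setup --- smoothing via Lemma~\ref{lemma:smooth}, the $L^p$ versions of Propositions~\ref{W2pconvergence} and~\ref{thm:W2p}, and the conformal rescaling through Theorem~\ref{thm:SUSY} --- matches the paper. The entire difficulty, however, sits in the last step, where you must show $v_{\eps} \to 0$ uniformly on compact sets of the \emph{noncompact} manifold $M$, and there your argument has a genuine gap: you propose to rerun the Moser iteration with cutoff functions $\eta$, but you explicitly defer the verification that the Caccioppoli terms produced by $\nabla\eta$ can be controlled at every stage of the iteration. As written, the key estimate is therefore not established. (A local boundedness argument of this type could be pushed through --- the cutoff contributions need not be small, only uniformly controlled in $\eps$, and the resulting bound $\sup_{K^*} |v_{\eps}| \lesssim \norm{v_{\eps}}_{L^{n^*}(\tilde K, \g_{\eps})} + \dots$ would tend to zero by Theorem~\ref{thm:gotozero} --- but this is not what you have written, and it is considerably more machinery than is needed.)

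The paper closes the gap with a one-line observation that you have missed: the \emph{global} estimates \eqref{lowerMoserbd} and \eqref{vupperbd} already apply with $\Omega = M$, because every occurrence of the volume $|\Omega|$ in the proofs of Theorems~\ref{thm:lowervest} and~\ref{thm:uppervest} (and in the input estimate \eqref{pvbound2}) arises from a {\H} inequality taken over the support of $f = \frac{1}{a_n}\sepsmin$, never over all of $\Omega$. Since $\g_{\eps} = \g$ outside the compact set $K_{\eps}$ and $\g$ is smooth with non-negative scalar curvature there, $\sepsmin$ has compact support, so $|\Omega|$ may everywhere be replaced by $\mu_{\g_{\eps}}(\supp \sepsmin) < \infty$. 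The resulting bound on $\sup_M |v_{\eps}|$ then tends to zero together with $\norm{\sepsmin}_{L^p}$, giving uniform (not merely locally uniform) convergence $v_{\eps} \to 0$ on $M$ with no cutoff argument at all; the rest of your proof, including the application of the classical positive mass theorem to the $\ghat_{\eps}$, then goes through as you describe.
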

\begin{proof}
The only non-trivial point is to note that in the estimates~\eqref{lowerMoserbd} and~\eqref{vupperbd}, the occurrences of the set $\Omega$ may be replaced by $\Omega \cap \supp f$. Since $\sepsmin$ will have compact support, these factors will remain finite, so the $v_{\eps}$ will still converge to zero.
\end{proof}

\begin{remark}
In the region where the metric $\g$ is smooth, the metrics $\g_{\eps}$ will converge to $\g$ in $C^{\infty}$.
\end{remark}

\subsection{Breakdown of Moser iteration for $p=n/2$}

Finally, we briefly consider the case where the metric $\g$ is of regularity $C^0(\Omega) \cap W^{2, n/2}_{\loc}(\Omega)$. In this case, the Moser iteration arguments used in Section~\ref{sec:global} to derive an $L^{\infty}$ bound on solutions of the Dirichlet problem break down. In particular, taking $p = \frac{n}{2}$ in the inequality~\eqref{MoserLp}, we deduce that
\begin{align*}
\norm{w}_{L^{\beta n^*}(\Omega)}^{2\beta}
&\le c_1 \beta^2 \norm{f}_{L^{n/2}(\Omega)} \left[ \norm{w}_{L^{(\beta-\frac{1}{2}) n^*}(\Omega)}^{2\beta-1} + \norm{w}_{L^{n^*}(\Omega)}^{2\beta} \right].
\end{align*}
Assuming that $c_1 \beta^2 \norm{f}_{L^{n/2}(\Omega)} < 1$, this inequality yields a bound on $\norm{w}_{L^{\beta n^*}(\Omega)}$ in terms of $\norm{w}_{L^{(\beta-\frac{1}{2}) n^*}(\Omega)}$. However, it is clear that such an iteration process breaks down when $\beta$ is of the order $\frac{1}{\sqrt{c_1 \norm{f}_{L^{n/2}(\Omega)}}}$. As such, the Moser iteration argument breaks down after a finite number of repetitions.%
\footnote{See, for instance,~\cite{RS} and~\cite[Thm.~4.4]{HanLin} for examples of this phenomenon.}
This means that we can establish an $L^p$ bound on $v$, where
\[
p \sim \beta n^* \sim n^* / \sqrt{c_1 \norm{f}_{L^{n/2}(\Omega)}} < \infty.
\]
Nevertheless, we note that $p(\eps) \to \infty$ as $\eps \to 0$.

These observations imply the following result:

\begin{theorem}
Let $\Omega'$ be an open set, $\g$ a continuous Riemannian metric on $\Omega'$ of regularity $W^{2, n/2}_{\loc}(\Omega')$ with non-negative scalar curvature in the distributional sense. Let $\Omega$ be an open subset of $\Omega'$ with compact closure and $\partial\Omega$ smooth. Then, there exists a family of smooth metrics $\ghat_{\eps}$ on $\Omega$ with non-negative scalar curvature such that $\ghat_{\eps} \to \g$ in $L^p_{\loc}(\Omega)$ as $\eps \to 0$, for all $p < \infty$.
\end{theorem}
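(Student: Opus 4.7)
The plan is to combine the smoothing framework of Section~\ref{sec:approx} with the finite Moser iteration observation just made. Mollify $\g$ componentwise in local charts covering a neighbourhood of $\overline{\Omega}$ in $\Omega'$, producing smooth Riemannian metrics $\g_\eps$ that converge uniformly to $\g$ on $\overline{\Omega}$ and in $W^{2,n/2}_{\loc}(\Omega')$. The argument of Proposition~\ref{thm:W2p} carries over verbatim and yields $\norm{\sepsmin}_{L^{n/2}(\Omega, \g_\eps)} \to 0$ as $\eps \to 0$. For all sufficiently small $\eps > 0$, the smallness condition $c_1[\g_\eps]\,\norm{\sepsmin}_{L^{n/2}(\Omega, \g_\eps)} < 1$ then holds, so the Dirichlet problem
\[
\Delta_{\g_\eps} u_\eps + \frac{1}{a_n}\sepsmin\, u_\eps = 0 \quad \text{in } \Omega, \qquad \left. u_\eps \right|\partial\Omega = 1,
\]
has a unique smooth solution; a weak-maximum-principle argument based on~\eqref{SYSob} applied to $(u_\eps)_-$ shows $u_\eps$ is strictly positive. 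The conformally rescaled metrics $\ghat_\eps := u_\eps^{4/(n-2)} \g_\eps$ are then smooth Riemannian metrics on $\Omega$ with non-negative scalar curvature.

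The heart of the argument is to bound $v_\eps := u_\eps - 1$ in $L^p(\Omega)$ for every $p < \infty$. Taking $p = n/2$ in the argument of Theorem~\ref{thm:uppervest}, the estimate~\eqref{MoserLp} becomes
\[
\norm{w_\eps}_{L^{\beta n^*}}^{2\beta}
\le \frac{c_1[\g_\eps]\beta^2}{a_n}\norm{\sepsmin}_{L^{n/2}}
\Big[ \norm{w_\eps}_{L^{(2\beta-1)n/(n-2)}}^{2\beta-1} + \norm{w_\eps}_{L^{\beta n^*}}^{2\beta} \Big],
\]
and the same-exponent term on the right can be absorbed on the left provided $c_1[\g_\eps]\beta^2\,\norm{\sepsmin}_{L^{n/2}} < a_n$, i.e.\ for $\beta$ up to $\beta_{\max}(\eps) \sim 1/\sqrt{c_1[\g_\eps]\norm{\sepsmin}_{L^{n/2}}}$. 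Starting from the $L^{n^*}$ bound~\eqref{pvbound2} (whose right-hand side itself tends to zero as $\eps \to 0$) and iterating as in the proof of Theorem~\ref{thm:uppervest}, I would track constants to obtain, for every fixed $p < \infty$, an estimate $\norm{v_\eps}_{L^p(\Omega)} \le C_p(\eps)$ with $C_p(\eps) \to 0$ as $\eps \to 0$. Since $\beta_{\max}(\eps) \to \infty$, any prescribed $p$ is eventually covered by a finite number of iterations, and because each iterate multiplies by a positive power of $\norm{\sepsmin}_{L^{n/2}}$, the compounded constant indeed vanishes. The matching lower bound on $v_\eps$ is obtained by the analogous iteration applied to $(v_\eps)_-$ via Theorem~\ref{thm:lowervest}.

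Once $v_\eps \to 0$ in $L^p_{\loc}(\Omega)$ for every $p < \infty$, the elementary estimate $|(1+v)^{4/(n-2)} - 1| \le C(|v| + |v|^{4/(n-2)})$, valid for $v \ge -1$, yields $u_\eps^{4/(n-2)} \to 1$ in $L^p_{\loc}(\Omega)$ for every $p < \infty$, and combining this with the uniform convergence $\g_\eps \to \g$ on $\overline{\Omega}$ produces $\ghat_\eps \to \g$ in $L^p_{\loc}(\Omega)$ for every $p < \infty$, as required. The main obstacle is the quantitative bookkeeping in the finite Moser iteration: one must keep the absorption prefactor $c_1[\g_\eps]\beta^2\,\norm{\sepsmin}_{L^{n/2}}$ below $a_n$ while $\beta$ grows, and at the same time show that the resulting $L^p$ bound on $v_\eps$ vanishes. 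The saving feature is that $\norm{\sepsmin}_{L^{n/2}} \to 0$, which permits keeping $\beta$ well below $\beta_{\max}(\eps)$ for any prescribed $p$ and forces the iterated constants to decay to zero.
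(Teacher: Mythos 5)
Your proposal follows essentially the same route as the paper: mollify, use Proposition~\ref{thm:W2p} to get $\norm{\sepsmin}_{L^{n/2}} \to 0$, conformally rescale via the Dirichlet problem, and exploit the fact that the Moser iteration at the critical exponent $p = n/2$ still runs for $\beta \lesssim 1/\sqrt{c_1[\g_\eps]\norm{\sepsmin}_{L^{n/2}}}$, so that any fixed $L^p$ bound is eventually reached and its constant vanishes as $\eps \to 0$. The paper leaves this as a brief observation ("these observations imply the following result"), whereas you have correctly filled in the quantitative bookkeeping, including the absorption condition and the passage from $v_\eps \to 0$ in $L^p$ to $\ghat_\eps \to \g$ in $L^p_{\loc}$.
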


\begin{remark}
It appears in general that the $\ghat_{\eps}$ do not converge locally uniformly to $\g$. In this regard, we note that it is possible to construct functions $f_{\eps} \ge 0$ on the unit ball in $\R^2$ with the property that $f_{\eps} \to 0$ in $L^1$ as $\eps \to 0$, but the corresponding solutions of the Dirichlet problem%
\footnote{$\Delta_0$ denotes the Laplacian $\partial_x^2 + \partial_y^2$.}
\[
\Delta_0 u_{\eps} + f_{\eps} u_{\eps} = 0 \mbox{ in $B(0, 1)$}, \qquad u_{\eps} = 1 \mbox{ on $\partial B(0, 1)$}
\]
have the property that $u_{\eps}(0) \to \infty$ as $\eps \to 0$.\footnote{We are grateful to Dr.\ Jonathan Bevan for pointing this out to us.}.
\end{remark}

It therefore appears likely that metrics in $C^0 \cap W^{2, n/2}_{\loc}$ with non-negative scalar curvature in the distributional sense cannot, in general, be approximated in $C^0$ by smooth metrics with non-negative scalar curvature. Similarly, when considering limits of sequences of smooth metrics with non-negative scalar curvature, one might expect to encounter non-compactness or ``bubbling off'' phenomena. Based on the extremal case of the Sobolev embedding theorem, however, we would expect that such metrics can be approximated in BMO or appropriate Orlicz spaces.

\section{A remark on rigidity}
An interesting point to note in our work is that we are unable to achieve a rigidity result with our approach. The rigidity part of the classical positive mass theorem asserts that if a metric is asymptotically flat, has non-negative scalar curvature and the ADM mass is zero, then $(M, \g)$ is isometric to $\R^n$ with the Euclidean metric. Our methods are insufficient to prove such a statement for our class of metrics. The proof of rigidity in Schoen and Yau~\cite{SY} involves a perturbation of the metric $\g$ by its Ricci tensor. This approach cannot be adapted to our metrics, since the Ricci tensor of the metric $\g$ lies in $L^{n/2}_{\loc}(M)$, so such a perturbation of $\g$ would not preserve the required regularity of the metric. The proof of the rigidity part of the positive mass theorem in the paper of Miao~\cite{Miao} in itself requires more regularity of the metric, in order to apply the results of~\cite{BrayFinster} (which, in turn, use Witten's technique, and therefore works only for spin manifolds). Finally, in the Ricci-flow approach of~\cite{McS} (perhaps the most promising approach to proving rigidity) it is required that one has an $L^{\infty}$ bound on the negative part of the scalar curvature of the smooth approximating metrics in order to show that the solution of the $h$-flow equation starting with the metric $\g$ has non-negative scalar curvature. In our case, we have only an $L^{n/2}$ bound on the negative part of the scalar curvature of the approximating metrics, which is insufficient. It therefore appears that all known ways to prove the rigidity result for non-spin manifolds break down (by some distance) for our class of metrics.

For the case of spin manifolds, the situation is rather better (cf.~\cite{Bartnik, BCPreprint, LL}). In Witten's argument, the rigidity part of the positive mass theorem follows directly from the fact that, if the mass of a metric is zero, then one has a basis of parallel spinor fields. As such, the spin-connection is flat, thus the full curvature tensor of $\g$ is flat. A theorem of Cartan then implies that the manifold is isometric to $\R^{n-k} \times T^k$, where $T^k$ is a flat torus of dimension $k$.%
\footnote{This is a smooth result. It is not completely clear to us what regularity conditions a Riemannian metric must satisfy in order vanishing of its Riemann curvature implies the existence of a locally isometry with Euclidean space.}
Asymptotic flatness then implies that $k = 0$. As such, the rigidity part of the positive mass theorem for spin manifolds appears to require no additional regularity.

Quite why there is such a mismatch between the regularity required to prove the positive mass theorem on spin manifolds and non-spin manifolds is, at this point, rather unclear.

\end{document}